\newtheorem{lemma}{Lemma}[section]
\newtheorem{theorem}[lemma]{Theorem}
\newtheorem*{theorem*}{Theorem}
\newtheorem{corollary}[lemma]{Corollary}
\newtheorem{proposition}[lemma]{Proposition}
\newtheorem{hypotheses}[lemma]{Hypotheses}
\newtheorem*{claim}{Claim}
\theoremstyle{definition}
\newtheorem{definition}[lemma]{Definition}
\theoremstyle{remark}
\newtheorem{remark}[lemma]{Remark}
\newtheorem{example}[lemma]{Example}
\DeclareMathOperator{\lip}{Lip}
\DeclareMathOperator{\dist}{dist}
\DeclareMathOperator{\diam}{diam}
\newcommand{\G}{\mathcal{G}}
\newcommand{\F}{\mathcal{F}}
\newcommand{\R}{\mathbb{R}}
\newcommand{\conv}{\operatorname{conv}}
\newcommand{\N}{\mathcal{N}}
\newcommand{\M}{\mathcal{M}}
\newcommand{\E}{\mathcal{E}}
\newcommand{\Q}{\mathcal{Q}}
\newcommand{\CAT}{CAT($\kappa$) }
\newcommand{\globlip}{\widehat{\operatorname{Lip}}}
\newcommand{\hav}{\operatorname{hav}}
\newcommand{\stern}{\operatorname{star}}
\begin{document}

\author{Christian Bargetz \and Michael Dymond \and Simeon Reich}
\title{Porosity Results for Sets of Strict Contractions on Geodesic Metric Spaces}
\maketitle
\begin{abstract}
  \noindent\textbf{Abstract.}
  We consider a large class of geodesic metric spaces, including Banach spaces, hyperbolic spaces and geodesic $\mathrm{CAT}(\kappa)$-spaces, and investigate the space of nonexpansive mappings on either a convex or a star-shaped subset in these settings. We prove that the strict contractions form a negligible subset of this space in the sense that they form a $\sigma$-porous subset. 
  For certain separable and complete metric spaces we show that a generic nonexpansive mapping has Lipschitz constant one at typical points of its domain. These results contain the case of nonexpansive self-mappings and the case of nonexpansive set-valued mappings as particular cases.\\[2mm]
  \textbf{Mathematics Subject Classification (2010).} 47H09, 47H04, 54E52\\[2mm]
  \textbf{Keywords.} Banach space, hyperbolic space, metric space, nonexpansive mapping, porous set, set-valued mapping, star-shaped set, strict contraction
\end{abstract}

\section{Introduction}
The question of existence of fixed points for nonexpansive mappings 
\[
f\colon C\to C,
\]
where $C$ denotes a certain nonempty closed subsets of a complete metric spaces $X$, has been well studied. Recall that a mapping $f$ is called \emph{nonexpansive} if it satisfies, for all $x,y\in C$, the inequality
\[
\rho(f(x),f(y))\leq \rho(x,y),
\]
where $\rho$ denotes the metric on $X$.
If $X$ is a Euclidean space and $C\subset X$ is bounded, closed and convex, Brouwer's fixed point theorem (Satz~4 in~\cite{Bro1911Abbildungen}) states that every continuous mapping $f\colon C\to C$ has a fixed point. The example
\[
T\colon C \to C, \quad Tx := (1, x_1, x_2, \ldots),
\]
where $C := \{x\in c_0\colon 0\leq x_n \leq 1\}$, shows that in infinite dimensions there are noncompact $C$ and nonexpansive mappings $f\colon C\to C$ without fixed points. 
In 1965 F.~E.~Browder showed in~\cite{Bro1965HilbertSpace} that nonexpansive mappings on the closed unit ball of the Hilbert space $\ell_2$ have a fixed point. Detailed discussions of the fixed point property for nonexpansive mappings can be found, for example, in Section~1.6 of~\cite{GR1984UniformConvexity} and in Chapter~4 of~\cite{GK1990FixedPointTheory}. More recent results are presented, for instance, in~\cite{Pia2015FixedPointProperty} and in the references cited therein.

Instead of characterizing the sets $C$ for which every nonexpansive self-mapping has a fixed point, F.~S.~De Blasi and J.~Myjak took a different approach in~\cite{DM1976Convergence, DM1989Porosity}. They raised the question of whether the typical nonexpansive mapping has a 
fixed point. To be more precise, let $C$ be a bounded, closed and convex subset of a Banach space $X$, and denote by
\[
\mathcal{M} := \left\{f\colon C\to C\colon \|f(x)-f(y)\|\leq \|x-y\| \text{ for all }x,y\in C\right\}
\]
the space of nonexpansive mappings on $C$ equipped with the metric of uniform convergence. 
In~\cite{DM1976Convergence} they proved that there is a dense $G_\delta$-set $\mathcal{M}'$ in $\mathcal{M}$ such that each $f\in\mathcal{M}'$ has a unique fixed point which is the pointwise limit of the iterates of $f$. They improved this result in~\cite{DM1989Porosity}, where they showed that there is a set $\mathcal{M}_*\subset\mathcal{M}$ with a $\sigma$-porous complement such that each $f\in\mathcal{M}_*$ has a unique fixed point which is the uniform limit of the iterates of $f$. Put in different words, these results state that a generic nonexpansive mapping $f$ on a bounded, closed and convex subset of a Banach space has a unique fixed point which is the uniform limit of the iterates of $f$.

Since Banach's fixed point theorem from 1922, see~\cite{Ban1922EnsembesAbstraits}, states that every \emph{strict contraction}, that is, a mapping
\[
f\colon C\to C\quad\text{with}\quad \rho(f(x),f(y)) \leq L \rho(x,y)\text{ and } L<1,
\]
has a unique fixed point which is the uniform limit of the iterates of $f$, the question arises whether a generic nonexpansive mapping on a bounded, closed and convex subset of a Banach space is, in fact, a strict contraction. Using the Kirszbraun-Valentine extension theorem, De Blasi and Myjak answered this natural question in the negative by showing in the aforementioned papers that if $X$ is a Hilbert space, then the set of strict contractions is $\sigma$-porous. In the recent article~\cite{BD2016:Porosity} the first two authors were able to show (by employing different methods) that this also holds true for general Banach spaces $X$.

In~\cite{Rak1962Contractive} E.~Rakotch proved a generalisation of Banach's fixed point theorem, where the Lipschitz constant can be replaced by a decreasing function. More precisely, a mapping $f\colon C\to C$ is called \emph{contractive in the sense of Rakotch} if there exists a decreasing function $\phi^f\colon [0,\diam(C)]\to [0,1]$ such that 
\[
\phi^f(t)< 1 \;\text{for}\; t>0 \quad\text{and}\quad \rho(f(x),f(y)) \leq \phi^f(\rho(x,y)) \rho(x,y)
\]
for all $x,y\in C$. Theorem~2 in~\cite{Rak1962Contractive} shows that every Rakotch contractive mapping has a unique fixed point which is the limit of the sequence of iterates of $f$. It can be shown that this fixed point is the uniform limit of the iterates of $f$.

In~\cite{RZ2001NoncontractiveMappings} the third author together with A.~J.~Zaslavski showed that there is a subset $\mathcal{M}_*\subset\mathcal{M}$ such that $\mathcal{M}\setminus\mathcal{M}_*$ is $\sigma$-porous and every $f\in\mathcal{M}_*$ is Rakotch contractive. This result can be interpreted as an explanation of the results of De Blasi and Myjak.

F.~Strobin showed in~\cite{Str2012PorousAndMeager} that in the case of an \emph{unbounded} domain $C$ this result is no longer true, but the original result of De Blasi and Myjak still holds.

In~\cite{RZ2016TwoPorosity} the Banach space $X$ has been replaced by a hyperbolic space, that is, a complete metric space together with a family of metric lines such that the resulting triangles are thin enough. In addition, in the unbounded case, a different metric on $\mathcal{M}$ is introduced and used to show that typical nonexpansive mappings are Rakotch contractive on bounded subsets.

Corresponding results, concerning the fixed point question and the prevalence of contractive mappings, for nonexpansive \emph{set-valued} mappings on star-shaped subsets of Banach and hyperbolic spaces have been presented in~\cite{BMRZ2009GenericExistence} and~\cite{PL2014ContractiveSetValued}.

The aim of the present paper is to show that in all the above cases the set of strict contractions is small in the sense that it is a $\sigma$-porous subset of the space of all nonexpansive mappings. In the case where the underlying space is separable, we further distinguish the nonexpansive mappings for which the Lipschitz constant is, in a certain sense, universally equal to one. We prove that even these mappings dominate the space of all nonexpansive mappings to the extent that they form the complement of a $\sigma$-porous subset. This 
extends~ \cite[Theorem~2.2]{BD2016:Porosity} to more general settings.

The paper is structured as follows: In Section~2 we develop the necessary background, before presenting our main results in Section~3. These statements are all obtained from a construction, given in Section~4. Finally, in Section~5 we discuss an application of our main results to set-valued nonexpansive mappings. More precisely, we prove that for several important spaces of nonexpansive set-valued mappings, the subset of strict contractions is $\sigma$-porous.

\section{Preliminaries and notations}
In this section we introduce the key concepts with which we work and establish various notations which appear throughout the paper.
\subsection{Nonexpansive mappings}
The central objects of study in this paper are spaces of nonexpansive mappings. Let $(X,\rho_{X})$ and $(Y,\rho_{Y})$ be complete metric spaces, and fix a point $\theta\in X$. By
\[
\M := \mathcal{M}(X,Y) := \{f\colon X\to Y\colon \lip(f)\leq 1\}
\]
we denote the space of nonexpansive mappings from $X$ to $Y$ equipped with the metric 
\begin{equation}\label{eq:metricDTheta}
  d_\theta (f,g) := \sup \left\{\frac{\rho_{Y}(f(x),g(x))}{1+\rho_{X}(x,\theta)} \colon x\in X\right\}.
\end{equation}
The inequalities 
\begin{align*}
  \frac{\rho_{Y}(f(x),g(x))}{1+\rho_{X}(x,\theta)} 
  & \leq \frac{\rho_{Y}(f(x),f(\theta))+\rho_{Y}(f(\theta),g(\theta))+\rho_{Y}(g(\theta),g(x))}{1+\rho_{X}(x,\theta)}\\
  & \leq \frac{\rho_{Y}(f(\theta),g(\theta))+2\rho_{X}(x,\theta)}{1+\rho_{X}(x,\theta)} 
    \leq \rho_{Y}(f(\theta),g(\theta)) + 2,
\end{align*}
which hold for all $x\in X$, show that $d_\theta$ is well defined. We note that the space $\M$ endowed with the metric $d_{\theta}$ is a complete metric space. Moreover, the topology of $\mathcal{M}$ does not depend on the particular choice of the point $\theta$: given $\theta_1\neq\theta$, the inequalities
\[
1 + \rho_{X}(x,\theta) \leq 1 + \rho_{X}(x,\theta_1) + \rho_{X}(\theta_1, \theta) 
\leq (1+\rho_{X}(x,\theta_1)) (1+\rho_{X}(\theta,\theta_1)),
\]
where $x\in X$, imply that the metrics $d_{\theta_1}$ and $d_\theta$ are Lipschitz equivalent. For a detailed discussion of the metric $d_\theta$, we refer the interested reader to~\cite{RZ2016TwoPorosity}.

\subsection{Porosity}
Our main results concern a special class of exceptional sets in metric spaces, namely the class of $\sigma$-porous sets, which were introduced in~\cite{Den1941LeconsII, dolvzenko1967granivcnye}. We define now the notion of porosity, according to~\cite{Zaj2005Porous}. In the context of a metric space, we write $B(x,r)$ for the open~ball with centre $x$ and radius $r$, and later $\overline{B}(x,r)$ for the closed~ball.
\begin{definition}
  Given a metric space $(M,d)$, a subset $A\subset M$ is called \emph{porous at a point $x\in A$} if there exist $\varepsilon_0>0$ and $\alpha>0$ such that for all $\varepsilon\in(0,\varepsilon_0)$, there exists a point $y\in B(x,\varepsilon)$ such that $B(y,\alpha\varepsilon)\cap A = \emptyset$. The set $A$ is called \emph{porous} if it is porous at all its points and $A$ is called \emph{$\sigma$-porous} if it is the countable union of porous sets.
\end{definition}
Note that this definition of porosity differs from the one in some of the aforementioned literature (e.g.,~\cite{DM1989Porosity}), where $A$ is called porous if the constants $\varepsilon_0$ and $\alpha$ are independent of the point $x$. Also, there the condition on $y$ reads as $B(y,\alpha\varepsilon)\subset (M\setminus A) \cap B(x,\varepsilon)$. This condition is equivalent to the one above as can be seen by choosing the point $y$ for a smaller $\varepsilon$ and adjusting $\alpha$ appropriately. For $\sigma$-porosity it also does not matter whether we assume that $\varepsilon_0$ and $\alpha$ are independent of the point $x$: assume we have a decomposition $A = \bigcup_{i=1}^{\infty} A_i$ and every $A_i$ is porous. For every $j,k\in\mathbb{N}$, define 
\[
A_i^{j,k} := \left\{x\in A_i\colon \varepsilon_0(x) \geq\frac{1}{j},\; \alpha(x)\geq\frac{1}{k}\right\}.
\]
Then $A= \bigcup_{i,j,k=1}^{\infty} A_{i}^{j,k}$ and each set $A_i^{j,k}$ is porous in the sense of~\cite{DM1989Porosity}. 
For a detailed discussion of the different concepts of porosity, we refer the reader to L.~Zaj\'{\i}\v{c}ek's survey article~\cite{Zaj2005Porous}. For the history of porosity, we also refer to~\cite{Bul1984DenjoyIndex, Ren1995Porosity}.

\subsection{Geodesic metric spaces}
A metric space $(X,\rho_{X})$ is called \emph{geodesic} if for every pair $x,y\in X$, there is an isometric embedding $c:[0,\rho_{X}(x,y)]\to X$ satisfying $c(0)=x$ and $c(\rho_{X}(x,y))=y$. The image of such an embedding is referred to as a \emph{metric segment} in $X$ with endpoints $x$ and $y$, and denoted by $[x,y]$. Such metric segments may not be unique and so the notation $[x,y]$ is in general not well defined. Given $\lambda\in[0,1]$ and a choice of metric segment $[x,y]$, we denote by $(1-\lambda)x\oplus\lambda y$ the unique point $z\in[x,y]$ satisfying $\rho_X(z,x)=\lambda \rho_X(x,y)$ and $\rho_X(z,y)=(1-\lambda)\rho_X(x,y)$. In places where we wish to emphasise that this point is defined according to the geodesic structure on the metric space $X$, we will write $(1-\lambda)x\oplus_{X}\lambda y$. An image of $\mathbb{R}$ by an isometric embedding is called a \emph{metric line}.

The most general setting in which the space of nonexpansive~mappings on a convex~set has so far been studied is that of a hyperbolic~space; see~\cite{RS1990Nonexpansive} and~\cite{RZ2016TwoPorosity}.
\begin{definition}
  Given a metric space $(X,\rho_{X})$ and a family $\mathcal{F}$ of metric segments in $X$, we call the triple $(X,\rho_{X},\F)$ \emph{hyperbolic} if the following conditions are satisfied:
  \begin{enumerate}[(i)]
  \item For each pair $x,y\in X$, there exists a unique metric segment $[x,y]\in\F$ joining $x$ and $y$.
  \item For all $x,y, z, w\in X$ and all $t\in[0,1]$,
    \begin{equation}\label{eq:hyperineq}
      \rho_{X}((1-t)x\oplus ty,(1-t)w\oplus tz)\leq (1-t)\rho_{X}(x,w)+t\rho_{X}(y,z).
    \end{equation}
  \item The collection $\mathcal{F}$ is closed with respect to subsegements. More precisely, for all $x,y\in X$ and $u,v\in[x,y]$ we have $[u,v]\subseteq [x,y]$.
  \end{enumerate}
\end{definition}

\begin{remark}
  \begin{enumerate}[(i)]
  \item Note that our definition of hyperbolic spaces slightly differs from the one in~\cite{RS1990Nonexpansive} since the original definition demands that every pair of points $x,y\in X$ admits a unique metric line $l\in\mathcal{F}$ such that $x,y\in l$. We note that in both variants of the definition the hyperbolic inequality~\eqref{eq:hyperineq} can be replaced with the following inequality for midpoints:
    \begin{equation*}
      \rho_{X}\left(\frac{1}{2}x\oplus\frac{1}{2}y,\frac{1}{2}x\oplus\frac{1}{2}z\right)\leq \frac{1}{2}\rho_{X}(y,z).
    \end{equation*} 
    A detailed discussion of different notions of hyperbolicity and convexity can be found in Remark~2.13 in~\cite[page~98]{Koh2005Metatheorems}.
  \item The hyperbolic inequality~\eqref{eq:hyperineq} was introduced by Busemann in~\cite{Bus1948NonpositiveCurvature} and is sometimes referred to as \emph{Busemann convexity}, cf.~\cite[p.~743]{Esp1015ContinuousSelections}.
  \end{enumerate}
\end{remark}

Nonexpansive mappings on convex and star-shaped subsets of Banach and hyperbolic spaces have been investigated in~\cite{BD2016:Porosity}, \cite{Str2014Porosity}, \cite{BMRZ2009GenericExistence} and \cite{RZ2016TwoPorosity}. 
We define below notions of convexity and star-shapedness in more general settings:
\begin{definition}\label{def:star-shapedconvex}
  Let $X$ be a metric space with metric $\rho_{X}$ and let $\F$ be a family of metric segments in $X$.
  \begin{enumerate}
  \item We say that a subset $C$ of $X$ is \emph{$\rho_{X}$-star-shaped} with respect to a point $x_{0}\in C$ if for every point $x\in C$, there is a metric segment $[x,x_{0}]\in \F$ such that $[x,x_{0}]\subseteq C$. Moreover, we write $\stern(C)$ for the set of points $y\in C$ with respect to which $C$ is $\rho_{X}$-star-shaped.
  \item We call a subset $C$ of $X$ \emph{$\rho_{X}$-convex} if for each pair $x,y\in C$ there is a metric segment $[x,y]\in \F$ such that $[x,y]\subseteq C$.
  \end{enumerate} 
\end{definition}
Clearly, convexity is stronger than star-shapedness: A set $C\subseteq X$ is $\rho_{X}$-convex if and only if $C$ is $\rho_{X}$-star-shaped with respect to $y$ for every point $y\in C$, i.e. $\stern(C)=C$. As a note of caution, we emphasise that the metric segments occurring in Definition~\ref{def:star-shapedconvex} need not be unique. Whenever we require that a metric segment $[x,y]$ be well defined, we will need to use condition \eqref{localuniqueness} of Definition~\ref{def:weaklyhyperbolic} below. Finally, let us point out that the above definitions of $\rho_{X}$-convex and $\rho_{X}$-star-shaped sets generalise the established notions in vector spaces and coincide with the notions defined for hyperbolic spaces. 

\subsection{Weakly hyperbolic spaces}
Whilst hyperbolic spaces form an important class of metric spaces, one can observe that even quite well-behaved metric spaces are excluded from this class. For an example, consider the unit sphere $\mathbb{S}^{2}$ in $\R^{3}$. For non-antipodal points $x,y\in \mathbb{S}^{2}$, there is a unique geodesic on the sphere with endpoints $x$ and $y$. However, antipodal points $x,-x\in \mathbb{S}^{2}$ admit infinitely many geodesics between them and there is no way to define the metric segment $[x,-x]$ so that the hyperbolic~inequality~\eqref{eq:hyperineq} is satisfied. Even if we relax the uniqueness condition on the family of metric segments, the sphere still presents problems. Taking $y=z$ in inequality~\eqref{eq:hyperineq}, we get
\begin{equation*}
  \rho_{X}((1-t)x\oplus ty,(1-t)w\oplus t y)\leq (1-t)\rho_{X}(x,w).
\end{equation*}
However, if we take $y\in \mathbb{S}^{2}$ to be the north pole, $x$ and $w$ to be two distinct points lying on the same line of latitude in the southern~hemisphere, we observe that
\begin{equation*}
  \rho_{\mathbb{S}^{2}}((1-t)x\oplus ty,(1-t)w\oplus ty)>\rho_{\mathbb{S}^{2}}(x,w).
\end{equation*}
for small $t\in(0,1)$. In other words, it is easy to find triangles on the sphere which become `fatter' as one moves away from their base towards their peak.

Thus, we propose to weaken the hyperbolic condition, in order to capture a larger class of metric spaces, including the sphere $\mathbb{S}^{2}$ and all geodesic $\operatorname{CAT}(\kappa)$ spaces.
\begin{definition}\label{def:weaklyhyperbolic}
  Given a metric space $(X,\rho_{X})$ and a family $\F$ of metric segments in $X$, we say that the triple $(X,\rho_{X},\F)$   is \emph{of temperate curvature} if the following conditions are satisfied:
  \begin{enumerate}[(i)]
  \item\label{localuniqueness} There exists a constant $D_{X}>0$ such that for any $x,y\in X$ with $\rho_{X}(x,y)<D_{X}$, there is at most one metric segment $[x,y]\in\F$ with endpoints $x$ and $y$. In the case where the metric segments in the family $\mathcal{F}$ are unique, we set $D_{X}=\infty$.
  \item\label{thinishtriangles} For all $x,y\in X$ with $\rho_{X}(x,y)<D_{X}$ and every $\sigma>0$, there exists a positive number $\delta_{X}=\delta_{X}(x,y,\sigma)$ such that
    \begin{equation}\label{eq:defdelta}
      \rho_{X}((1-t)z\oplus ty,(1-t)w\oplus ty)\leq (1+\sigma)\rho_{X}(z,w)
    \end{equation}
    whenever $z,w\in B(x,\delta_{X})$, $[z,y],[w,y]\in\mathcal{F}$ and $t\in [0,\delta_{X})$. 
  \end{enumerate}
  A triple $(X,\rho_{X},\mathcal{F})$ of temperate curvature is called \emph{weakly hyperbolic} if, in addition, the following conditions are satisfied:
  \begin{enumerate}[(i)]
    \setcounter{enumi}{2}
  \item\label{closedwrtsubseg} $\mathcal{F}$ is closed with respect to subsegments, that is, for all metric segments $[x,y]\in\mathcal{F}$ and all points $z,w\in[x,y]$ there is a metric segment $[z,w]\in\mathcal{F}$ with $[z,w]\subseteq [x,y]$.
  \item\label{geodesic} For all $x,y\in X$ there exists a metric segment $[x,y]\in\mathcal{F}$.
  \item\label{smallballs} For all $x\in X$ and $r\in(0,D_{X}/2)$, the ball $B(x,r)$ is a $\rho_{X}$-convex subset of $X$.
  \end{enumerate}
  When referring to either a space of temperate curvature or to a weakly hyperbolic space $(X,\rho_{X},\F)$, we often suppress the metric $\rho_{X}$ and the family of metric segments $\F$.
\end{definition}
Condition~\eqref{localuniqueness} weakens the assumption that every pair of points is connected by a unique metric segment. We note that the sphere $\mathbb{S}^{2}$ satisfies condition~\eqref{localuniqueness} with $D_{\mathbb{S}^2}=\pi$. 
Condition~\eqref{thinishtriangles} is a significant weakening of the hyperbolic inequality~\eqref{eq:hyperineq} and allows us to form `thin-ish' triangles in the space $X$. Let us imagine that we wish to form a triangle $T$ with vertices $y,z,w$ in $X$. We fix first the `peak' $y$ of the triangle $T$ and then consider an arbitrary location $x\in X$ with $\rho_{X}(x,y)<D_{X}$. Condition~\eqref{thinishtriangles} allows us to choose a small neighborhood of the point $x$ so that placing the remaining two vertices $z,w$ in this neighborhood, we produce a triangle in which the sides $[z,y]$ and $[w,y]$ do not bulge out too much as one moves a little away from the base of the triangle $[z,w]$ towards the peak~$y$.

It is clear that all hyperbolic spaces are weakly hyperbolic. We now demonstrate that the class of weakly hyperbolic spaces is significantly larger than that of hyperbolic spaces. More precisely, we  show that all geodesic \CAT spaces are weakly hyperbolic. Let us first recall the definition of \CAT spaces, from~\cite{BH1999MetricSpaces}.
\begin{definition}
  \begin{enumerate}
  \item We define a family of model spaces $(M_{\kappa})$, where $\kappa\in\R$, as follows:
    \begin{enumerate}
    \item For $\kappa>0$ we let $M_{\kappa}$ denote the metric space given by the sphere $\mathbb{S}^{2}$ with its standard path length metric, scaled by a factor of $1/\sqrt{\kappa}$.
    \item We define $M_{0}$ as the Euclidean space $\R^{2}$.
    \item For $\kappa<0$ we write $M_{\kappa}$ for the hyperbolic space $\mathbb{H}^{2}$ (see~\cite[Definition~2.10]{BH1999MetricSpaces}) with metric scaled by a factor of $1/\sqrt{-\kappa}$. 
    \end{enumerate}
    We write $d_{\kappa}$ for the metric on $M_{\kappa}$.
  \item Let $\kappa\in \R$ and $(X,\rho_{X})$ be a metric space. 
    Given three points $x_{1},x_{2},x_{3}\in X$ and metric segments of the form $[x_{1},x_{2}],[x_{2},x_{3}],[x_{3},x_{1}]\subseteq X$ we call the union $[x_{1},x_{2}]\cup[x_{2},x_{3}]\cup[x_{3},x_{1}]$ a \emph{geodesic triangle} with vertices $x_{1},x_{2},x_{3}$.
    A  geodesic triangle with vertices $\overline{x}_{1},\overline{x}_{2}, \overline{x}_{3}$ in $M_{\kappa}$ is said to be a \emph{comparison triangle} for a geodesic triangle with vertices $x_{1},x_{2},x_{3}$ in $X$ if $d_{\kappa}(\overline{x}_{i},\overline{x}_{j})=\rho_{X}(x_{i},x_{j})$. 
    A point $\overline{x}\in [\overline{x}_{i},\overline{x}_{j}]$ is called a \emph{comparison point} for $x\in[x_{i},x_{j}]$ if $d_{\kappa}(\overline{x},\overline{x}_{k})=\rho_{X}(x,x_{k})$ for $k=i,j$.
  \item Let $(X,\rho_{X})$ be a metric space. If $\kappa\leq 0$, then $(X,\rho_{X})$ is called a \CAT space if it is geodesic and every geodesic triangle $T$ in $X$ has a comparison triangle $\overline{T}$ in $M_{\kappa}$ such that
    \begin{equation}\label{eq:comparisontriangle}
      \rho_{X}(x,y)\leq d_{\kappa}(\overline{x},\overline{y})
    \end{equation}
    whenever $\overline{x},\overline{y}\in\overline{T}$ are comparison points for $x,y\in T$. If $\kappa>0$, then we define a constant $D_{\kappa}=\diam M_{\kappa}=\frac{\pi}{\sqrt{\kappa}}$ and we say that $(X,\rho_{X})$ is a \CAT space if for every pair of points $x,y\in X$ with $\rho_X(x,y)< D_\kappa$ there is a metric segment joining $x$ and $y$ and every geodesic triangle $T\subseteq X$ with perimeter smaller that $2D_\kappa$, that is, $\rho_X(x,y)+\rho_X(y,z)+\rho_X(z,x)< 2D_{\kappa}$, where $x,y,z$ denote the vertices of $T$, has a comparison triangle $\overline{T}$ in $M_\kappa$ such that~\eqref{eq:comparisontriangle} is satisfied.
  \end{enumerate}
\end{definition}
Thus, \CAT spaces can be thought of as metric spaces for which every sufficiently small geodesic triangle is `thinner' in all directions than a corresponding triangle in the model space $M_{\kappa}$. The classes of \CAT spaces are increasing in the sense that whenever $X$ is a \CAT space, it is also a $\operatorname{CAT}(\kappa')$ space for all $\kappa'\geq \kappa$; see~\cite[Theorem~1.12]{BH1999MetricSpaces}. 

In the proof of the next proposition, the most difficult task is to establish that every \CAT space  satisfies condition~(\ref{thinishtriangles}) of Definition~\ref{def:weaklyhyperbolic} and, in particular, to verify inequality~\eqref{eq:defdelta}. A related inequality for geodesic triangles with side lengths smaller than $\pi/2$ in CAT($1$) spaces is shown in Lemma~3.3 of~\cite{Pia2011HalpernIteration}.

\begin{proposition}\label{prop:CATisweaklyhyperbolic}
  Every geodesic \CAT space is weakly hyperbolic.
\end{proposition}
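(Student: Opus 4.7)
The strategy is to verify the five conditions of Definition~\ref{def:weaklyhyperbolic} for a geodesic \CAT space $X$, equipped with the family $\F$ of \emph{all} its metric segments. Standard facts about \CAT spaces (see~\cite{BH1999MetricSpaces}) guarantee that any two points at distance less than $D_\kappa$ (with $D_\kappa=\pi/\sqrt{\kappa}$ if $\kappa>0$ and $D_\kappa=\infty$ otherwise) are joined by a unique geodesic, and that every ball of radius less than $D_\kappa/2$ is $\rho_X$-convex; setting $D_X=D_\kappa$ therefore takes care of~\eqref{localuniqueness} and~\eqref{smallballs}. Condition~\eqref{geodesic} is the hypothesis that $X$ is geodesic, and~\eqref{closedwrtsubseg} is immediate, since the restriction of a metric segment to any subinterval is again a metric segment and hence lies in $\F$. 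This leaves~\eqref{thinishtriangles} as the only substantive point to verify.

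Fix $x,y\in X$ with $\rho_X(x,y)<D_X$, and fix $\sigma>0$. Take $z,w$ close enough to $x$ that the geodesic triangle with vertices $y,z,w$ has perimeter strictly less than $2D_\kappa$, so that the \CAT comparison applies. Let $\overline y,\overline z,\overline w\in M_\kappa$ be vertices of a comparison triangle, and set $p_t=(1-t)z\oplus ty$, $q_t=(1-t)w\oplus ty$, with $\overline p_t,\overline q_t\in M_\kappa$ the comparison points on the model sides $[\overline z,\overline y]$ and $[\overline w,\overline y]$. Inequality~\eqref{eq:comparisontriangle} then gives
\begin{equation*}
  \rho_X(p_t,q_t)\leq d_\kappa(\overline p_t,\overline q_t),
\end{equation*}
reducing the task to proving $d_\kappa(\overline p_t,\overline q_t)\leq (1+\sigma)d_\kappa(\overline z,\overline w)$ inside the model space $M_\kappa$.

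If $\kappa\leq 0$, then $M_\kappa$ is Busemann convex, so the hyperbolic inequality~\eqref{eq:hyperineq} applies in $M_\kappa$ and yields $d_\kappa(\overline p_t,\overline q_t)\leq (1-t)d_\kappa(\overline z,\overline w)$, which is stronger than required and holds with no restriction on the neighbourhood. The real work is the spherical case $\kappa>0$, where Busemann convexity fails on $M_\kappa$. Here I would write down $d_\kappa(\overline p_t,\overline q_t)$ explicitly by applying the spherical law of cosines to the triangle $\overline y,\overline p_t,\overline q_t$, whose sides meeting at $\overline y$ have lengths $(1-t)d_\kappa(\overline z,\overline y)$ and $(1-t)d_\kappa(\overline w,\overline y)$ and enclose the same angle as in the original triangle $\overline y,\overline z,\overline w$. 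The ratio
\begin{equation*}
  R(\overline z,\overline w,t):=\frac{d_\kappa(\overline p_t,\overline q_t)}{d_\kappa(\overline z,\overline w)}
\end{equation*}
is then an explicit continuous function, equal to $1$ at $t=0$ and, in the degenerate limit $\overline w\to\overline z$, equal to the sine quotient $\sin((1-t)d_\kappa(\overline z,\overline y))/\sin d_\kappa(\overline z,\overline y)$. The main obstacle is precisely this limit: a naive continuity argument on $M_\kappa\setminus\{\overline z=\overline w\}$ produces only an additive bound, whereas the definition demands a multiplicative bound valid uniformly as $\overline w\to\overline z$. I plan to resolve this by means of a second-order expansion of the spherical law of cosines (or equivalently a direct monotonicity analysis of the sine quotient on the interval $[0,D_\kappa)$), which produces a radius $\delta_X=\delta_X(x,y,\sigma)$ on which $R\leq 1+\sigma$ holds uniformly in $z,w\in B(x,\delta_X)$ and $t\in[0,\delta_X)$, thereby establishing~\eqref{eq:defdelta}.
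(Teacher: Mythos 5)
Your plan follows the same route as the paper: take $\F$ to be all geodesics, dispatch conditions~\eqref{localuniqueness}, \eqref{closedwrtsubseg}, \eqref{geodesic}, \eqref{smallballs} via standard \CAT facts with $D_{X}=D_{\kappa}$, reduce condition~\eqref{thinishtriangles} to the model space by a comparison triangle, observe that $\kappa\leq 0$ is handled by Busemann convexity, and then do spherical trigonometry for $\kappa>0$. The only genuine divergence is the endgame in the spherical case: the paper works with the law of haversines, $\hav c=\hav(a-b)+\sin a\sin b\hav C$, whereas you propose a direct second-order expansion of the spherical law of cosines or a monotonicity analysis of the sine quotient. The haversine identity is precisely the rearrangement of the law of cosines that avoids the additive cancellation problem you correctly identify; it isolates the small quantity $\hav \rho(z_t,w_t)$ as a sum of two manifestly small non-negative terms and thereby turns the estimate into a clean ratio bound $\hav\rho(z_t,w_t)/\hav\rho(z,w)\leq 1+\eta$, which a Taylor expansion of $\hav$ near $0$ then converts into the required multiplicative bound on distances.

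Two details in your sketch deserve care if you carry it out. First, your description of the degenerate limit $\overline w\to\overline z$ gives only the tangential limit $\sin((1-t)d_{\kappa}(\overline z,\overline y))/\sin d_{\kappa}(\overline z,\overline y)$; when $\overline w\to\overline z$ radially the ratio tends to $1-t\leq 1$ (benign), and for mixed directions the ratio interpolates. Your proposed expansion would need to cover the mixed case uniformly — the haversine decomposition does this automatically by separating the radial term $\hav(t(|z|-|w|))$ from the angular term $\sin t|z|\sin t|w|\hav\Theta$. Second, the reduction to the model space is not quite as automatic as ``let $\overline y,\overline z,\overline w$ be vertices of a comparison triangle'': you must position the comparison triangle (using isometry invariance of $M_{\kappa}$) so that $\overline z,\overline w$ land inside the ball around the fixed model base point $\overline x$ for which $\delta_{\kappa}(\overline x,\overline y,\sigma)$ was chosen; the paper does this explicitly by finding $\overline x'$ on a great circle through $\overline z$ and $\overline y'$ with $d_\kappa(\overline x',\overline y')=\rho_X(x,y)$ and $d_\kappa(\overline x',\overline z)<\delta$, then applying a sphere isometry to identify $\overline x'$ with $\overline x$. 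Neither gap is fatal, but both need to be filled to make the plan a proof.
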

\begin{proof}
  Let $(X,\rho_{X})$ be a \CAT space and $\mathcal{F}$ be the collection of all geodesics in $X$. We show that the triple $(X,\rho_{X},\mathcal{F})$ is a weakly hyperbolic space.  We may assume that $\kappa>0$. It is already clear that the family $\mathcal{F}$ satisfies conditions~\eqref{closedwrtsubseg} and \eqref{geodesic} of Defintion~\ref{def:weaklyhyperbolic}. For a proof that $X$ satisfies 
  conditions~\eqref{localuniqueness} and \eqref{smallballs} with $D_{X}=D_{\kappa}$ we refer the reader to \cite[Proposition~1.4]{BH1999MetricSpaces}. 
  We now verify that $X$ satisfies condition~\eqref{thinishtriangles}. As a first step, we show that it is sufficient to verify that the sphere $\mathbb{S}^{2}$ with metric $\rho=\rho_{\mathbb{S}^{2}}$ scaled by $1/\sqrt{\kappa}$ satisfies this condition. Suppose that the model spaces satisfy conditon~(\ref{thinishtriangles}) of Definition~\ref{def:weaklyhyperbolic}. Let $(X,\rho_{X})$ be a \CAT space and let $x,y\in X$ with $\rho_{X}(x,y)<D_{\kappa}$. Then we choose $\overline{x},\overline{y}\in M_{\kappa}$ with $d_{\kappa}(\overline{x},\overline{y})=\rho_{X}(x,y)$. Given $\sigma>0$, we choose $\delta=\delta_{X}(x,y,\sigma)\in(0,\delta_{\kappa}(\overline{x},\overline{y},\sigma)/4)$, where $\delta_{\kappa}(\overline{x},\overline{y},\sigma)$ is given by condition (ii) for $M_{\kappa}$, sufficiently small so that $\rho_{X}(x,y)+2\delta<D_{\kappa}$. Let $z,w\in B(x,\delta)$. Then by the triangle inequality, we have
  \[
    \rho_X(y,w)+\rho_X(w,z)+\rho_X(z,y) < 2(\rho_X(x,y)+2\delta) < 2D_\kappa.
  \]
  Therefore we can choose a comparison triangle in $M_{\kappa}$ with vertices $\overline{y}',\overline{z},\overline{w}$ for the geodesic triangle with vertices $y,z,w$ in $X$.
  Since $d_{k}(\overline{u},\overline{y}')=\rho_{X}(u,y)$ for $u\in\left\{z,w\right\}$ and $z,w\in B(x,\delta)$, we have
  \begin{equation*}
    \left|d_{\kappa}(\overline{u},\overline{y}')-\rho_{X}(x,y)\right|< \delta
  \end{equation*}
  for $u\in\left\{z,w\right\}$. It follows that there is a great circle passing through $\overline{z}$ and $\overline{y}'$ and a point $\overline{x}'$ on this great circle with \begin{equation*}\label{eq:condxp}
    d_{\kappa}(\overline{x}',\overline{y}')=\rho_{X}(x,y)\quad\text{and}\quad d_{\kappa}(\overline{x}',\overline{z})<\delta.
  \end{equation*}
  Since the metric $d_{\kappa}$ on $M_{\kappa}$ is invariant under isometries of the sphere, we may assume now that $\overline{y}'=\overline{y}$ and $\overline{x}'=\overline{x}$. Then we have $\overline{z},\overline{w}\in B(\overline{x},4\delta)\subset B(\overline{x},\delta_{\kappa}(\overline{x},\overline{y},\sigma))$. Therefore, by condition (ii) for $M_{\kappa}$, we get
  \begin{equation*}
    d_{\kappa}((1-t)\overline{z}\oplus t\overline{y},(1-t)\overline{w}\oplus t\overline{y})\leq (1+\sigma)d_{k}(\overline{z},\overline{w})=(1+\sigma)\rho_{X}(z,w)
  \end{equation*}
  for all $t\in[0,\delta_{\kappa}(\overline{x},\overline{y},\sigma))$. Consequently, by~\eqref{eq:comparisontriangle},
  \begin{equation*}
    \rho_{X}((1-t)z\oplus ty,(1-t)w\oplus ty)\leq d_{\kappa}((1-t)\overline{z}\oplus t\overline{y},(1-t)\overline{w}\oplus t\overline{y})\leq (1+\sigma)\rho_{X}(z,w)
  \end{equation*}
  for all $t\in[0,\delta)\subseteq(0,\delta_{\kappa}(\overline{x},\overline{y},\sigma))$. 

  From this point on we will assume that $\kappa=1$, since multiplying the metric $\rho$ on the sphere by a factor of $1/\sqrt{\kappa}$ does not affect any of the calculations which follow. 

  Let $x,y\in \mathbb{S}^{2}$ with $\rho(x,y)<D_{\kappa}=\pi$ and fix $\sigma>0$. Note that $x$ and $y$ cannot be antipodal. We consider two cases, namely $\rho(x,y)>0$ and $\rho(x,y)=0$. We start with the case $\rho(x,y)>0$ and let $\delta=\delta(x,y,\sigma)\in(0,\pi/8)$ be some positive constant to be determined later in the proof. For now we just prescribe that $\delta$ be small enough so that
  \begin{equation*}
    I(x,y,\delta):=[(1-\delta)(\rho(x,y)-\delta),\rho(x,y)+\delta]\subseteq (0,\pi).
  \end{equation*}
  We define constants $m_{\sin}=m_{\sin}(x,y,\delta)$ and $M_{\sin}=M_{\sin}(x,y,\delta)$ by
  \begin{align*}
    m_{\sin}&:=\min\left\{\sin \theta \colon \theta\in I(x,y,\delta)\right\},\\ 
    M_{\sin}&:=\max\left\{\sin \theta \colon \theta\in I(x,y,\delta)\right\},
  \end{align*}
  and define constants $m_{\cos}$, $M_{\cos}$ analogously with $\sin$ replaced by $\cos$. Note that 
  \[
  m_{\sin},M_{\sin}\to\sin\rho(x,y)\quad\text{and}\quad m_{\cos},M_{\cos}\to\cos\rho(x,y)
  \]
  as $\delta\to 0^{+}$. 

  For points $z\in B(x,\delta)$ we write $|z|=\rho(z,y)$. We note that $\left||z|-|x|\right|\leq \rho(z,x)\leq \delta$ and hence $|z|\in I(x,y,\delta)\subseteq(0,\pi)$ for all $z\in B(x,\delta)$. For points $z,w\in B(x,\delta)$, we let $\Theta(z,w)$ denote the angle at the vertex $y$ of the spherical triangle with vertices $z,w,y$. In what follows we use the spherical law of cosines~\cite[Proposition~2.4.1]{Jen1994ModernGeometry} and the equivalent law of haversines:
  \begin{equation*}
    \cos c=\cos a\cos b+\sin a\sin b\cos C,\quad \hav c=\hav(a-b)+\sin a\sin b\hav C,
  \end{equation*}
  where $\hav\theta:=\sin^{2}(\theta/2)$, which relate the side lengths $a$, $b$, $c$ of a spherical triangle to the angle $C$ at the vertex opposite to the side of length $c$. 

  Let $z,w\in B(x,\delta)$. Applying the spherical law of cosines to the spherical triangle with vertices $z$, $w$ and $y$, we deduce that
  \begin{equation*}
    1\geq\cos \Theta(z,w)=\frac{\cos \rho(z,w)-\cos |z|\cos |w|}{\sin |z|\sin |w|}
    \geq \frac{\cos2\delta-\max\left\{M_{\cos}^{2},m_{\cos}^{2}\right\}}{M_{\sin}^{2}}
  \end{equation*}
  provided we choose $\delta$ small enough so that $\cos2\delta-\max\left\{M_{\cos}^{2},m_{\cos}^{2}\right\}\geq 0$.
  In the above we use the facts that $\cos $ is decreasing on the interval $(0,\pi/2)$ and $\rho(z,w)\leq 2\delta$. The last expression is independent of $z,w\in B(x,\delta)$ and converges to 1. 
  It follows that
  \begin{equation}\label{eq:anglecontrol}
    \sup\left\{\Theta(z,w) \colon z,w\in B(x,\delta)\right\}\to0\quad\text{ as }\quad\delta\to {0^{+}}.
  \end{equation}
  For $t\in [1-\delta,1]$ and $z,w\in B(x,\delta)$, we consider the spherical triangle with vertices $z_{t}:=tz\oplus (1-t)y$, $w_{t}:=tw\oplus (1-t)y$, and $y$. This triangle has sides of length $t\left|z\right|$, $t\left|w\right|$ and $\rho(z_{t},w_{t})$, and angle $\Theta(z,w)$ at the vertex $y$. Without loss of generality, we assume $|z|\geq |w|$ and note that the inequalities $||u|-|x||\leq \delta$ for all $u\in B(x,\delta)$ and $1-\delta\leq t \leq 1$ together with the definition of $I(x,y,\delta)$ imply that $t|z|,t|w|\in I(x,y,\delta)$. In addition note that $||z|-|w||\leq 2\delta<\pi/4$ by the triangle inequality and hence $|z|-|w|\in [0,\pi/4)$. Using the law of haversines, we obtain
  \begin{align*}
    &\frac{\hav \rho(z_{t},w_{t})}{\hav \rho(z,w)}=\frac{\hav(t(|z|-|w|))+\sin t|z|\sin t|w| 
      \hav \Theta(z,w)}{\hav (|z|-|w|)+\sin |z|\sin |w|\hav \Theta(z,w)}\\
    &\leq 1+\frac{(\hav(t(|z|-|w|))-\hav(|z|-|w|))+(\sin t|z|\sin t|w|-\sin |z|\sin |w|) 
      \hav \Theta(z,w)}{\hav (|z|-|w|)+\sin |z|\sin |w|\hav \Theta(z,w)}\\
    &\leq 1+\frac{M_{\sin}^{2}-m_{\sin}^{2}}{m_{\sin}^{2}}.
  \end{align*}
  To deduce the above inequalities we use the fact that $\hav$ is monotonically increasing and non-negative on the interval $[0,\pi/2)$ in combination with the constraints on $\delta$, $|z|$, $|w|$ and $t$ as discussed above. Observe that the last expression converges to $1$ as $\delta\to {0^{+}}$ and is independent of the choices of $z,w\in B(x,\delta)$ and $t\in[1-\delta,1]$. 
  Given $\eta>0$ to be determined later in the proof, it follows that we can choose $\delta$ sufficiently small depending only on the points $x,y$ so that 
  \begin{equation}\label{eq:havct/havc}
    \frac{\hav \rho(z_{t},w_{t})}{\hav \rho(z,w)}\leq 1+\eta \qquad \forall t\in[1-\delta,1],\quad \forall z,w\in B(x,\delta).
  \end{equation}
  Next, observe that
  \begin{align*}
    \hav \rho(z_{t},w_{t})&=\hav(t(|z|-|w|))+\sin t|z|\sin t|w|\hav \Theta(z,w)\\
                          &\leq \hav(2\delta)+M_{\sin}^{2}\hav(\sup\left\{\Theta(z,w)\colon z,w\in B(x,\delta)\right\})
  \end{align*}
  since $t(|z|-|w|)<2\delta<\pi/2$ and $\hav$ is increasing on $[0,\pi/2)$.
  The last expression is independent of the choices of $z,w\in B(x,\delta)$ and $t\in[1-\delta,1]$ and, using~\eqref{eq:anglecontrol}, we see that it converges to $0$ as $\delta\to {0^{+}}$. Thus, using that $\hav \rho(z_t,w_t)\to 0$ implies $\rho(z_t,w_t)\to 0$ and the Taylor expansion of $\hav x $ at $x=0$, we can prescribe that $\delta>0$ be sufficiently small so that the following inequalities hold:
  \begin{equation}\label{eq:havctapp}
    \hav \rho(z_{t},w_{t})\geq \frac{\rho(z_{t},w_{t})^{2}}{4}-\eta \rho(z_{t},w_{t})^{2}\qquad \forall z,w\in B(x,\delta),
    \quad \forall t\in[1-\delta,1],
  \end{equation}
  \begin{equation}\label{eq:havcapp}
    \hav \rho(z,w)\leq \frac{\rho(z,w)^{2}}{4}+\eta \rho(z,w)^{2} \qquad \forall z,w\in B(x,\delta).
  \end{equation}
  Combining inequalities \eqref{eq:havct/havc}, \eqref{eq:havctapp} and \eqref{eq:havcapp}, we deduce that
  \begin{equation*}
    \rho(z_{t},w_{t})^{2}\leq \frac{(1+\eta)(\frac{1}{4}+\eta)}{(\frac{1}{4}-\eta)}\rho(z,w)^{2} \qquad 
    \forall z,w\in B(x,\delta),\quad\forall t\in[1-\delta,1].
  \end{equation*}
  If we prescribe that $\eta$ be chosen sufficiently small so that the constant before $\rho(z,w)^{2}$ in the above inequality is at most $(1+\sigma)^{2}$, then we obtain the desired result.

  If $\rho(x,y)=0$, we choose $\delta=\delta_{X}(x,y,\sigma)\in(0,\pi/4)$. Given $z,w\in B(x,\delta)$ and $t\in(0,1)$ we let $z_{t}:=tz\oplus (1-t)x$, $w_{t}:=tw\oplus (1-t) x$ and $\theta$ be the angle at the vertex $x$ of the spherical triangle with vertices $x,w,z$. For $u\in \mathbb{S}^{2}$ we also write $\left|u\right|$ for the distance $\rho(u,x)$. Then, for all $t\in[0,1]$, the law of haversines gives
  \begin{align*}    \hav\rho(z_{t},w_{t})&=\hav(t(\left|z\right|-\left|w\right|))+\sin(t\left|z\right|)\sin(t\left|w\right|)\hav \theta\\
                      &\leq\hav(\left|z\right|-\left|w\right|)+\sin\left|z\right|\sin\left|w\right|\hav\theta\\
                      &=\hav\rho(z,w).
  \end{align*}
  In the above we used that $\hav$ is symmetric, non-negative and that $\hav$ and $\sin$ are increasing on the interval $[0,\pi/2)$. Using again that $\hav$ is increasing on the interval $[0,\pi/2)$, we conclude that $\rho(z_{t},w_{t})\leq\rho(z,w)$ for all $t\in[0,1]$. This is a stronger version of the inequality in Definition~\ref{def:weaklyhyperbolic},~(\ref{thinishtriangles}).  
\end{proof}

Given a subset $E\subset X$ of a metric space $X$ and $r>0$, we use the notations
\[
  B(E,r) := \{x\in X\colon d(x,E) <r\}\qquad\text{and}\qquad \overline{B}(E,r) := \{x\in X\colon d(x,E) \leq r\}.
\]
Note that if $X$ is a weakly hyperbolic space and $E\subseteq X$ is a nonempty subset, the set $\overline{B}(E,r)\setminus B(E,r)$ has empty interior. Indeed, any $x\in\overline{B}(E,r)\setminus B(E,r)$ satisfies 
\begin{equation*}
  \dist(x,E):=\inf\left\{\rho_{X}(x,u)\colon u\in E\right\}=r.
\end{equation*}
Given $0<\varepsilon<r$ we choose $x_{0}\in E$ such that $r\leq\rho_{X}(x,x_{0})<r+\varepsilon/2$. Then every point of the form  $(1-\frac{\varepsilon}{\rho_{X}(x,x_{0})})x\oplus\frac{\varepsilon}{\rho_{X}(x,x_{0})}x_{0{}}$ lies in $\overline{B}(x,\varepsilon)\cap B(E,r)$. This shows that $\overline{B}(E,r)\setminus B(E,r)$ has empty interior. Note that the above argument also shows that for a $\rho_X$-star-shaped set $C\subset X$ and any $r>0$, $\overline{B}(\stern(C),r)\setminus B(\stern(C),r)$ has empty interior in $C$.

In addition, we get that in weakly hyperbolic spaces the closure of an open ball is the corresponding closed ball, that is, we have $\overline{B(x,r)} = \overline{B}(x,r)$ for all $x\in X$ and all $r>0$. The inclusion $\overline{B(x,r)}\subseteq \overline{B}(x,r)$ follows from the continuity of the metric whereas the opposite inclusion can be deduced analogously to the above argument using the fact that $[z,x]\setminus\left\{z\right\}\subseteq B(x,r)$ for any $z\in \overline{B}(x,r)$.

\subsection{$\ell^{\infty}$ spaces}
We make frequent use of two special properties of $\ell_{\infty}$ spaces. Firstly, we exploit the fact that any metric space can be isometrically embedded into $\ell_{\infty}(\Omega)$ for some set $\Omega$. Thus, we often identify metric spaces with subsets of some $\ell_{\infty}$ space. Note that given two metric spaces $X$ and $Y$ which are isometrically embedded into $\ell_{\infty}(\Omega_1)$ and $\ell_{\infty}(\Omega_2)$, respectively, we can embed both $X$ and $Y$ isometrically into $\ell_{\infty}(\Omega_1\uplus\Omega_2)$, where $\Omega_1\uplus\Omega_2$ stands for the disjoint union of $\Omega_1$ and $\Omega_2$ since $\ell_{\infty}(\Omega_i)$, $i=1,2$, embeds isometrically into $\ell_{\infty}(\Omega_1\uplus\Omega_2)$. Secondly, we make use of the fact that any Lipschitz mapping defined on a subset of a metric space $M$ and taking values in some $\ell_{\infty}(\Omega)$, can be extended to a Lipschitz mapping $F\colon M\to\ell_{\infty}(\Omega)$ with the same Lipschitz constant. A detailed discussion of these special properties of $\ell_{\infty}$ spaces can be found in~\cite[Chapter 1]{BL2000GeomtericNonlinear}.

\section{Main results}
In this section we present our main results. In fact we show that all of our main results can be derived 
from a single theorem, Theorem~\ref{thm:all}, which is proved in the next section. Before stating this 
result, we establish our general hypotheses.
\begin{hypotheses}\label{hypotheses}
  Let $(X,\rho_{X})$ be a complete, weakly hyperbolic space, $(Y,\rho_{Y})$ be a complete space of temperate curvature
  and $C_{X}\subseteq X$, $C_{Y}\subseteq Y$ be non-empty, closed, non-singleton and $\rho_{X}$- and $\rho_{Y}$-star-shaped 
  subsets of $X$ and $Y$, respectively. 
  Suppose that the set $C_{Y}$ satisfies $C_{Y}\subseteq B(\stern(C_{Y}),D_{Y})$. Let $\conv(C_{X})$ denote a 
  $\rho_{X}$-convex subset of $X$ containing $C_{X}$ and choose a set $\Omega$ so that $X,Y\subset \ell_{\infty}(\Omega)$. 
  Let $\theta\in X$ and $\M(C_{X},C_{Y})$ denote the space of nonexpansive mappings from $C_{X}$ to $C_{Y}$, equipped with 
  the metric~$d_{\theta}$. Let $\N(C_{X},C_{Y})$ denote the subset of $\M(C_{X},C_{Y})$ formed by the strict contractions.
  Given a mapping $f\in \M(C_{X},C_{Y})$, we let $\E(f)$ denote the set of all $1$-Lipschitz extensions 
  $F\colon\conv(C_{X})\to \ell_{\infty}(\Omega)$ of~$f$. 	
\end{hypotheses}
We note that the condition $C_{Y}\subseteq B(\stern(C_{Y}),D_{Y})$ is satisfied in particular in each of the following cases:
\begin{itemize}
\item $C_{Y}$ is $\rho_{Y}$-convex,
\item $Y$ is a space of temperate curvature with $D_{Y}=\infty$. This class of spaces includes all hyperbolic spaces and 
  \CAT spaces with $\kappa\leq 0$. 
\end{itemize}
In what follows, given a set $U$ and a Lipschitz mapping $f$, we write $f|_{U}$ for the restriction of $f$ to the subset of its domain contained in $U$.
\begin{theorem}\label{thm:all}
  Let $U$ be an open subset of $X$ with $U\cap C_{X}\neq\emptyset$ and $U\subseteq B(\stern(C_{X}),D_{X})$. Then the set
  \begin{equation*}
    \Q(U)=\left\{f\in \M(C_{X},C_{Y})\colon \inf_{F\in\E(f)}\lip(F|_{U})<1\right\}
  \end{equation*}
  is $\sigma$-porous in $\M(C_{X},C_{Y})$.
\end{theorem}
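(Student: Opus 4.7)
The plan is to express $\Q(U)$ as the countable union
\[
\Q(U) = \bigcup_{n=1}^{\infty} \Q_{n}(U), \qquad \Q_{n}(U) := \left\{ f \in \M(C_X, C_Y) : \inf_{F \in \E(f)} \lip(F|_U) \leq 1 - \tfrac{1}{n} \right\},
\]
which is valid because any $\tau < 1$ satisfies $\tau \leq 1 - 1/n$ for some $n \geq 1$. It thus suffices to show that each $\Q_{n}(U)$ is porous in $\M(C_X, C_Y)$.

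Fix $n \geq 1$. The hypotheses on $U$ and $C_X$ furnish a pair of distinct points $a, b \in U \cap C_X$ lying on a common metric segment $[a, x_{0}] \subseteq C_X$ with $x_{0} \in \stern(C_X)$ and $\rho_{X}(a, x_{0}) < D_{X}$; by Definition~\ref{def:weaklyhyperbolic}\,(\ref{localuniqueness}) this segment is uniquely determined. I would fix such a pair $a, b$ once and for all. Given $f \in \Q_{n}(U)$ and small $\varepsilon > 0$, the goal is to construct $g \in \M(C_X, C_Y)$ with $d_{\theta}(f, g) \leq \varepsilon$ and
\[
\rho_{Y}(g(a), g(b)) \geq \left(1 - \tfrac{1}{2n}\right) \rho_{X}(a, b),
\]
and then to choose a porosity constant $\alpha = \alpha(n) > 0$ independent of $f$ and $\varepsilon$.

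The verification that $B(g, \alpha\varepsilon) \cap \Q_{n}(U) = \emptyset$ is then straightforward: for any $h \in B(g, \alpha\varepsilon)$, the definition~\eqref{eq:metricDTheta} of $d_{\theta}$ and the triangle inequality in $Y$ yield
\[
\rho_{Y}(h(a), h(b)) \geq \left(1 - \tfrac{1}{2n}\right) \rho_{X}(a, b) - 2 \alpha \varepsilon \bigl(1 + \max\{\rho_{X}(a, \theta), \rho_{X}(b, \theta)\}\bigr),
\]
and a sufficiently small choice of $\alpha$ (depending only on $n$, $\rho_{X}(a, b)$ and $\rho_{X}(a, \theta)$) forces the right-hand side to exceed $(1 - 1/n) \rho_{X}(a, b)$. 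Since $\{a, b\} \subseteq U \cap C_X$, every $1$-Lipschitz extension $H \in \E(h)$ agrees with $h$ on $\{a, b\}$, so $\lip(H|_{U}) > 1 - 1/n$ for every such $H$, forcing $h \notin \Q_{n}(U)$.

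The main obstacle is the construction of $g$. The slack of order $1/n$ in the Lipschitz constant of a near-optimal extension $F_{0} \in \E(f)$ on $U$ provides the room for a perturbation: in a Banach space, one would simply add to $F_{0}$ a $(1/(2n))$-Lipschitz bump function supported near $[a, b]$, aligned with the direction $F_{0}(b) - F_{0}(a)$, and tapered off to zero outside $U$ to obtain a globally $1$-Lipschitz perturbation of $f$. In the present weakly hyperbolic setting I would carry out this additive perturbation in the ambient space $\ell_{\infty}(\Omega)$ (where McShane $1$-Lipschitz extensions and additive bumps are freely available) and then retract the perturbed values back into $C_Y$ along short $\rho_{Y}$-geodesics toward a suitable point of $\stern(C_Y)$, which is possible thanks to the standing hypothesis $C_Y \subseteq B(\stern(C_Y), D_{Y})$. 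The delicate quantitative task is to localise the perturbation to a sufficiently small ball so that Definition~\ref{def:weaklyhyperbolic}\,(\ref{thinishtriangles}), applied with parameter $\sigma$ chosen small compared to $1/n$, preserves nonexpansiveness of the resulting $g$ everywhere on $C_X$, while the bounds $d_{\theta}(f, g) \leq \varepsilon$ and $\rho_{Y}(g(a), g(b)) \geq (1 - 1/(2n))\rho_{X}(a, b)$ are simultaneously maintained, with constants depending only on $n$ and on the fixed data $(a, b, x_{0})$.
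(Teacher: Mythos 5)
You have identified the correct general shape of the argument (perturb $f$ to a nearby $g$ and show a whole $d_\theta$-ball around $g$ misses the set), and your verification that $h\notin\Q_n(U)$ once $\rho_Y(h(a),h(b))>(1-1/n)\rho_X(a,b)$ is sound. However, there is a fundamental gap in the construction step, and the decomposition $\Q(U)=\bigcup_n\Q_n(U)$ is too coarse for porosity.

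The unachievable step is the promise that, for any $f\in\Q_n(U)$ and \emph{any} small $\varepsilon>0$, you can produce $g\in\M(C_X,C_Y)$ with $d_\theta(f,g)\leq\varepsilon$ and $\rho_Y(g(a),g(b))\geq(1-\tfrac{1}{2n})\rho_X(a,b)$, with $a,b$ fixed in advance. If $f$ happens to satisfy $\rho_Y(f(a),f(b))\ll(1-\tfrac{1}{2n})\rho_X(a,b)$ (a constant map, say, which certainly admits strict-contraction extensions and hence belongs to every $\Q_n(U)$), then for any $g$ with $d_\theta(f,g)\leq\varepsilon$ the triangle inequality and \eqref{eq:metricDTheta} give $\rho_Y(g(a),g(b))\leq\rho_Y(f(a),f(b))+\varepsilon\bigl(2+\rho_X(a,\theta)+\rho_X(b,\theta)\bigr)$, which is bounded away from $(1-\tfrac{1}{2n})\rho_X(a,b)$ once $\varepsilon$ is small. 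Porosity requires the construction for \emph{all} sufficiently small $\varepsilon$, so a fixed large target at a fixed pair $(a,b)$ cannot be reached. (The additive-bump heuristic in your last paragraph runs into the same wall: to stay within $\varepsilon$ of $f$ in $d_\theta$, a bump must have supremum $O(\varepsilon)$, and an $O(\varepsilon)$-sup, $O(1/n)$-Lipschitz bump simply cannot move $\|F_0(a)-F_0(b)\|$ by the $\Theta(1)$ amount that is needed.)

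The paper circumvents exactly this difficulty by a \emph{finer} decomposition indexed by a second pair of parameters $a<b$ that pins down the size of $\sup_{\Gamma\in\G}\lip(f|_\Gamma)$ in a narrow band $(a,b]$ (together with a residual class $\Q_0(U)$ handling the degenerate case $\sup_\Gamma\lip(f|_\Gamma)=0$). With this stratification, the perturbation only has to raise a \emph{local} difference quotient (taken over a segment of length $\Theta(\varepsilon)$, found via Lemma~\ref{lem:LipOnGamma}) from $a$ to something $>b$; since $b-a$ is small relative to $a$ (the constraint \eqref{eq:ConditionABUBStar}), this increase is achieved by the small geodesic perturbation of Lemma~\ref{lemma:pert}, whose relative size $\sigma=16(b-a)/a$ is also small enough to keep $g$ nonexpansive in combination with the extension slack $1-\tfrac{1}{p}$. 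In short, the missing idea is to exclude $h$ from the class by exceeding a carefully chosen \emph{upper} bound on a scale-$\varepsilon$ difference quotient, rather than by forcing a fixed-scale quantity all the way up to $1-\tfrac{1}{n}$. Without this refinement your argument breaks down on the very first mapping you try to perturb.
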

As a corollary of the above theorem, we obtain the $\sigma$-porosity of the set $\N(C_{X},C_{Y})$ in the space 
$\M(C_{X},C_{Y})$:
\begin{theorem}\label{thm:Nsigporous}
  The set $\N(C_{X},C_{Y})$ is a $\sigma$-porous subset of $\M(C_{X},C_{Y})$. 
\end{theorem}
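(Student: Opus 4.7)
The plan is to deduce Theorem~\ref{thm:Nsigporous} directly from Theorem~\ref{thm:all} by exhibiting a single admissible open set $U$ and showing $\N(C_X,C_Y)\subseteq\Q(U)$.

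First I would fix the open set $U$. Since $C_X$ is $\rho_X$-star-shaped, $\stern(C_X)$ is non-empty; pick any $x_0\in\stern(C_X)$ and any $r>0$ with $r<D_X$ (any positive $r$ if $D_X=\infty$). Set $U:=B(x_0,r)\subseteq X$. Then $U$ is open in $X$, $x_0\in U\cap C_X$, and for every $x\in U$ we have $\dist(x,\stern(C_X))\leq\rho_X(x,x_0)<r<D_X$, so $U\subseteq B(\stern(C_X),D_X)$. Thus $U$ meets the hypotheses of Theorem~\ref{thm:all}.

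The main step is the inclusion $\N(C_X,C_Y)\subseteq\Q(U)$. Let $f\in\N(C_X,C_Y)$ and set $L:=\lip(f)<1$. Viewing $f$ as an $L$-Lipschitz map into $\ell_\infty(\Omega)$ via the isometric inclusion $C_Y\subseteq Y\subseteq\ell_\infty(\Omega)$, the $\ell_\infty$-valued Lipschitz extension result recalled in Section~2.5 produces an extension $F\colon\conv(C_X)\to\ell_\infty(\Omega)$ with $\lip(F)=L$. Since $L<1$, the map $F$ is in particular $1$-Lipschitz, so $F\in\E(f)$, and
\[
  \inf_{G\in\E(f)}\lip(G|_U)\leq\lip(F|_U)\leq\lip(F)=L<1,
\]
which shows $f\in\Q(U)$.

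Finally, Theorem~\ref{thm:all} applied to $U$ gives that $\Q(U)$ is $\sigma$-porous in $\M(C_X,C_Y)$. Porosity, and hence $\sigma$-porosity, is inherited by subsets: if $B\subseteq A$ and $b\in B$, then any witness ball $B(y,\alpha\varepsilon)$ disjoint from $A$ is also disjoint from $B$. Consequently $\N(C_X,C_Y)$ is $\sigma$-porous as a subset of $\Q(U)$. There is no real obstacle once Theorem~\ref{thm:all} is granted; the substantive content resides in that theorem, while the present corollary merely observes that the $\ell_\infty$-Lipschitz extension theorem converts a strict contraction on $C_X$ into an extension on $\conv(C_X)$ whose Lipschitz constant over any $U$ is strictly less than $1$.
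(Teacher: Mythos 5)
Your proof is correct and takes essentially the same route as the paper: extend the strict contraction $f$ to an $L$-Lipschitz map $F\colon\conv(C_X)\to\ell_\infty(\Omega)$ with $L<1$ via the $\ell_\infty$-extension theorem, deduce $\N(C_X,C_Y)\subseteq\Q(U)$ for an admissible $U$, and invoke Theorem~\ref{thm:all}. The paper's proof is simply a terser version of this, stating without further comment that $U$ may be chosen arbitrarily satisfying the hypotheses and leaving the existence of such $U$ and the subset-porosity observation implicit.
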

\begin{proof}
  Any strict contraction $f\colon C_{X}\to C_{Y}$ can be extended to a strict contraction 
  $F\colon\conv(C_{X})\to\ell_{\infty}(\Omega)$. Therefore $\N(C_{X},C_{Y})\subseteq \Q(U)$, where $U$ may be
  chosen arbitrarily satisfying the conditions of Theorem~\ref{thm:all}.
\end{proof}
Whilst Theorem~\ref{thm:Nsigporous} tells us that nearly all mappings in $\M(C_{X},C_{Y})$ have the maximal 
permitted Lipschitz~constant~one, we note that a large Lipschitz constant can be achieved through sporadic 
behavior. It is easy to find examples of mappings with a large Lipschitz constant that, when restricted to 
a large subset of their domain, behave like strict contractions or even constant mappings. Thus, we now 
consider the question of the size of the set of mappings in $\M(C_{X},C_{Y})$ for which a large set of points 
in $C_{X}$ in some sense witnesses the maximal Lipschitz constant. The paper~\cite{BD2016:Porosity} proves 
that, for a non-empty, non-singleton, closed, convex and bounded subset~$C$ of a separable~Banach~space~$X$, 
there is a $\sigma$-porous subset of the space $\M(C,C)$, outside of which all mappings $f$ admit a residual 
subset of $C$ on which the quantity
\begin{equation*}
  \lip(f,x):=\limsup_{r\to 0^{+}}\left\{\frac{\rho_{Y}(f(y),f(x))}{\rho_{X}(x,y)}\colon y\in B(x,r)\setminus\left\{x\right\}\right\}
\end{equation*} 
is uniformly one. We use the term \emph{residual} here in the sense of the Baire Category Theorem. The proof 
of this result makes essential use of the fact that the Lipschitz constant of a mapping on a convex set $C$ 
can be expressed as the supremum of $\lip(f,x)$ over all points $x\in C$. We verify this property for Lipschitz 
mappings on convex subsets of $X$:
\begin{lemma}\label{lem:LipOnGamma}
  Let $C$ be a non-empty, non-singleton, $\rho_{X}$-convex subset of $X$. Given a Lipschitz mapping $f:C\to Y$ and 
  a number $0<L<\lip(f)$, 
  there exist points $u_{0},u_{1}\in C$ such that
  \begin{equation}\label{eq:liminfGamma}
    \liminf_{t\to 0^+}\frac{\rho_{Y}(f((1-t)u_0\oplus tu_1), f(u_0))}{t\rho_{X}(u_0,u_1)} > L.
  \end{equation}
  In the case where $C\subseteq [w_{0},x_{0}]$ for some $w_{0},x_{0}\in X$, then such points $u_{0},u_{1}\in C$ can be 
  found with $u_{1}=x_{0}$.
\end{lemma}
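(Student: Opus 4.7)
My plan is to reduce the problem to one-dimensional Lipschitz calculus along a metric segment inside $C$. I would first choose $L'$ with $L<L'<\lip(f)$ and, by definition of the Lipschitz constant, pick $x,y\in C$ satisfying $\rho_Y(f(x),f(y))>L'\rho_X(x,y)>0$. The $\rho_X$-convexity of $C$ provides a metric segment $[x,y]\in\F$ contained in $C$; I parametrize it by $c(t):=(1-t)x\oplus ty$ for $t\in[0,1]$ and set $\phi(t):=\rho_Y(f(c(t)),f(x))$. The triangle inequality gives $|\phi(t)-\phi(s)|\leq\lip(f)|t-s|\rho_X(x,y)$, so $\phi$ is Lipschitz on $[0,1]$ with $\phi(0)=0$ and $\phi(1)>L'\rho_X(x,y)$.

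Applying the fundamental theorem of calculus for Lipschitz functions on an interval yields
\[
\int_0^1 \phi'(t)\,dt = \phi(1)-\phi(0) > L'\rho_X(x,y),
\]
so the set of $t\in(0,1)$ where $\phi'(t)$ exists and exceeds $L'\rho_X(x,y)$ has positive Lebesgue measure. I fix any such $t_0\in[0,1)$ and set $u_0:=c(t_0)$, $u_1:=y$. By condition~(iii) of Definition~\ref{def:weaklyhyperbolic}, the segment $[u_0,u_1]$ can be taken as the subsegment $c([t_0,1])$, and the defining distance conditions for $\oplus$ then give the identifications $(1-t)u_0\oplus tu_1 = c(t_0+t(1-t_0))$ and $\rho_X(u_0,u_1)=(1-t_0)\rho_X(x,y)$. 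Setting $s:=t(1-t_0)$, the triangle inequality combined with these identifications produces
\[
\frac{\rho_Y(f((1-t)u_0\oplus tu_1),f(u_0))}{t\rho_X(u_0,u_1)} \geq \frac{|\phi(t_0+s)-\phi(t_0)|}{s\,\rho_X(x,y)} \xrightarrow[t\to 0^+]{} \frac{\phi'(t_0)}{\rho_X(x,y)} > L,
\]
establishing \eqref{eq:liminfGamma}.

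For the refined conclusion when $C\subseteq [w_0,x_0]$, I would use that $[w_0,x_0]$ is isometric to a real interval, so every metric segment contained in it---such as $[x,y]$ and $[u_0,x_0]$---must be a subarc of $[w_0,x_0]$. After possibly interchanging $x$ and $y$ at the outset, the point $y$ then lies on $[u_0,x_0]$, so for sufficiently small $t>0$,
\[
(1-t)u_0\oplus tx_0 = (1-s)u_0\oplus sy,\qquad s:=\frac{t\rho_X(u_0,x_0)}{\rho_X(u_0,y)}.
\]
Since $t\rho_X(u_0,x_0)=s\rho_X(u_0,y)$, the ratio in \eqref{eq:liminfGamma} with $u_1=x_0$ coincides term-by-term with the one already analyzed and inherits the same liminf. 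I expect the main bookkeeping hurdle to be precisely these two segment identifications; each rests on the subsegment axiom together with the uniqueness of points of the form $(1-\lambda)u\oplus\lambda v$ on a given metric segment, together with the elementary fact that any isometric embedding of an interval into an interval is an inclusion of a subinterval.
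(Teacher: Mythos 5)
Your proof is correct, but it takes a genuinely different route from the paper's. The paper argues by contradiction: it supposes that the scaled one-sided difference quotient stays below $L'$ at every point $u_0\in[v,w)$, turns that assumption into a Vitali cover of $(v,w)$ by short intervals on which $f$ has small slope, applies Vitali's covering theorem, and chains the resulting disjoint intervals together to force $\rho_Y(f(w),f(v))/\rho_X(v,w)\leq L'$, contradicting the choice of $v,w$. You instead work directly with the auxiliary Lipschitz function $\phi(t)=\rho_Y(f(c(t)),f(x))$ on $[0,1]$ and invoke the fundamental theorem of calculus for Lipschitz (absolutely continuous) functions: since $\int_0^1\phi'=\phi(1)-\phi(0)>L'\rho_X(x,y)$ while $\phi'\leq\lip(f)\rho_X(x,y)$ a.e., the set $\{t:\phi'(t)>L'\rho_X(x,y)\}$ has positive measure, and any $t_0$ there yields $u_0=c(t_0)$, $u_1=y$; the reverse triangle inequality then transfers the derivative bound for $\phi$ to the required $\liminf$. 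The two arguments carry the same measure-theoretic content (Vitali covering is what underlies the FTC you cite), but yours is direct rather than by contradiction, packages the covering step behind a standard theorem, and is arguably shorter; the paper's is more self-contained and simultaneously sets up the refinement for $C\subseteq[w_0,x_0]$ by prescribing $v<w<x_0$ from the start, whereas you obtain that refinement afterwards by a clean reparametrization along the subsegment $[u_0,x_0]\subseteq[w_0,x_0]$. Both handle the boundary case correctly. One small caveat in your write-up: the set where $\phi'$ exists and exceeds $L'\rho_X(x,y)$ is a subset of $(0,1)$, so you should fix $t_0\in(0,1)$ rather than $[0,1)$, and when invoking the subsegment axiom to identify $[u_0,x_0]$ as an arc of $[w_0,x_0]$ it is cleaner to cite condition~\eqref{closedwrtsubseg} of Definition~\ref{def:weaklyhyperbolic} directly rather than appeal to uniqueness of isometric embeddings of intervals, since $\mathcal{F}$ may contain several segments between the same endpoints when they are far apart.
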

\begin{proof}
  Let $L'\in(L,\lip(f))$ and choose points $v,w\in C$ such that
  \begin{equation*}
    \frac{\rho_{Y}(f(w),f(v))}{\rho_{X}(v,w)}>L'.
  \end{equation*}
  In the case where $C\subseteq[w_{0},x_{0}]$, we identify the metric segment $[w_{0},x_{0}]$ with a closed interval in $\R$ and additionally prescribe that $v<w<x_{0}$. 
  
  Let $[v,w]$ be a metric segment in $X$ with endpoints $v$ and $w$. We identify $[v,w]$ 
  with a closed interval in $\mathbb{R}$. Assume that
  \begin{equation}\label{eq:LipConstContradictionGamma}
    \liminf_{t\to 0^+} \frac{\rho_{Y}(f((1-t)u_0\oplus t w),f(u_0))}{t\rho_{X}(u_0,w)} < L'
  \end{equation}
  for all $u_0\in [v,w)$, where $[u_0,w]\subseteq[v,w]$. We define a collection of metric segments $\mathcal{U}$ by
  \[
  \mathcal{U} := \left\{[\xi,\eta]\subset (v,w) \colon \frac{\rho_{Y}(f(\xi),f(\eta))}{\rho_{X}(\eta,\xi)} <L'\right\},
  \]
  which is, by assumption~\eqref{eq:LipConstContradictionGamma}, a Vitali cover of $(v,w)$. By Vitali's
  covering theorem, there exist pairwise disjoint intervals $[\xi_i,\eta_i]\in\mathcal{U}$ such that
  \[
  \lambda^1 \Big( (v,w)\setminus\bigcup_{i=1}^{\infty}[\xi_i,\eta_i]\Big) = 0,
  \]
  where $\lambda^1$ denotes the one-dimensional Lebesgue measure. We will prove that 
  \begin{equation*}
    \frac{\rho_{Y}(f(w),f(v))}{\rho_{X}(v,w)} \leq L',
  \end{equation*}
  contradicting the choice of $v,w\in C$. From this contadiction we conclude that  assumption~\eqref{eq:LipConstContradictionGamma}
  is false. Consequently, there exists $u_{0}\in[v,w)$ such that \eqref{eq:liminfGamma} is satisfied with $u_{1}=w$. 
  In the case $C\subseteq [w_{0},x_{0}]$ we have $u_{0}<w<x_{0}$ and therefore \eqref{eq:liminfGamma} is also satisfied 
  with $u_{1}=x_{0}$.
  
  To complete the proof, we establish the contradiction described above. For $\varepsilon>0$, choose $N$ large enough 
  so that
  \[
  \lambda^1\Big((v,w)\setminus \bigcup_{i=1}^{N} [\xi_i,\eta_i]\Big) <\frac{\varepsilon \rho(v,w)}{\lip(f)}.
  \]
  Without loss of generality, we may assume that $\xi_1<\eta_1<\xi_2<\dots<\xi_N<\eta_N$, that is, the above 
  intervals are in ascending order. From this, we deduce that
  \begin{align*}
    \frac{\rho_{Y}(f(w),f(v))}{\rho_{X}(v,w)} 
    & \leq \frac{1}{\rho_{X}(v,w)}\Big( \rho_{Y}(f(v),f(\xi_1)) +\sum_{i=1}^{N} \rho_{Y}(f(\xi_i),f(\eta_i))\\ 
    &\hspace{4.5cm} +\sum_{i=1}^{N-1}\rho_{Y}(f(\eta_i),f(\xi_{i+1}))+\rho_{Y}(f(\eta_N),f(w))\Big) \\
    & \hspace{-2.7cm} \leq \frac{1}{\rho_{X}(v,w)} \Big(L' \Big(\sum_{i=1}^{N}\rho_{X}(\xi_i,\eta_i)\Big)
      +\lip(f)\Big(\rho_{X}(v,\xi_1)+\sum_{i=1}^{N-1}\rho_{X}(\eta_i,\xi_{i+1})+\rho_{X}(\eta_N,w)\Big)\Big)\\
    & \hspace{-2.7cm} \leq \frac{1}{\rho_{X}(v,w)} (L'\rho_{X}(v,w)+\varepsilon \rho_{X}(v,w)) = L'+\varepsilon.
  \end{align*}
  Letting $\varepsilon\to 0^{+}$, we arrive at the desired contradiction. 
\end{proof}

For $\rho_{X}$-star-shaped domains, the conclusion of Lemma~\ref{lem:LipOnGamma} is, in general, not valid and so the 
global Lipschitz constant may not be approximated by $\lip(f,x)$. We demonstrate this with an example:
\begin{example} 
Let $e=(1,0)\in\mathbb{R}^{2}$ and $u\in\mathbb{S}^{1}$ with $\|e-u\|=\frac{1}{3}$. We set $A=[0,e]$, 
$B=[0,u]$, $X=A\cup B$ and define
\[
f\colon X\to X,\quad z=(z_1,z_2)\mapsto \begin{cases}(0,0)&\text{for } z\in B\\ 
  \frac{1}{2}\left(\max\left\{z_1-\frac{1}{3},0\right\},0\right) 
  & \text{for } z\in A\end{cases}.
\]
Then $\lip(f,x)$ is bounded above by $\frac{1}{2}$ for all $x\in X$ but $\|f(e)-f(u)\| = \frac{1}{3} = \|e-u\|$ 
shows that the global Lipschitz constant of $f$ is at least $1$.
\end{example}

Thus, for $\rho_{X}$-star-shaped domains, we consider a weaker control of the Lipschitz constant at a point. Namely, 
for $f\in\M(C_{X},C_{Y})$ and $x\in C_{X}$, we define the quantity 
\begin{equation*}
  \globlip(f,x):=\sup\left\{\frac{\rho_{Y}(f(y),f(x))}{\rho_{X}(x,y)}\colon y\in C_{X}\setminus\left\{x\right\}\right\},
\end{equation*}
which satisfies $\lip(f,x)\leq\globlip(f,x)$. Given a mapping $f\in\M(C_{X},C_{Y})$, we define sets 
$R(f),\widehat{R}(f)\subseteq C_X$ by
\begin{equation*}
  R(f):=\left\{x\in C_{X}\colon \lip(f,x)=1\right\},\qquad \widehat{R}(f):=\left\{x\in C_{X}\colon \globlip(f,x)=1\right\}.
\end{equation*}
We note that $R(f)\subseteq \widehat{R}(f)$. Under suitable additional assumptions we show that for nearly all 
mappings $f\in\M(C_{X},C_{Y})$, either the set $R(f)$ or the set $\widehat{R}(f)$ is a residual subset of $C_{X}$. For 
a given $f\in\M(C_{X},C_{Y})$, we point out that the sets $R(f)$ and $\widehat{R}(f)$ are both $G_{\delta}$ subsets 
of~$C_{X}$. 
To see this, note that
\begin{equation}\label{eq:Rfbarint}
  \widehat{R}(f)=\bigcap_{q\in\mathbb{Q}\cap(0,1)}\left\{x\in C_{X}\colon \globlip(f,x)>q\right\}
\end{equation}
and 
\begin{equation}\label{eq:Rfint}
  R(f)=\bigcap_{q,r\in\mathbb{Q}\cap(0,1)}\left\{x\in C_{X}\colon \lip(f,x,r)>q\right\},
\end{equation}
where for $x\in C_{X}$ and $r>0$, we define
\begin{equation*}
  \lip(f,x,r):=\sup\left\{\frac{\rho_{Y}(f(y),f(x))}{\rho_{X}(x,y)}\colon y\in C_{X}\cap B(x,r)\setminus\left\{x\right\}\right\}.
\end{equation*}
Note that we have $\lip(f,x)=\lim_{r\to0^{+}}\lip(f,x,r)$.
It is readily verified that each of the sets participating in the above intersections is open in $C_{X}$. 

In the case where the set $C_{X}$ is separable and $\rho_{X}$-convex we obtain the following generalisation of~\cite[Theorem~2.2]{BD2016:Porosity}: 
\begin{theorem}\label{thm:convexresidual}
  Suppose $C_{X}$ is separable and $\rho_{X}$-convex. Then there exists a $\sigma$-porous set $\widetilde{\N}\subseteq \M(C_{X},C_{Y})$ such that for every $f\in\M(C_{X},C_{Y})\setminus \widetilde{\N}$, the set
  \begin{equation*}
    R(f)=\left\{x\in C_{X}\colon \lip(f,x)=1\right\}
  \end{equation*} 
  is a residual subset of $C_{X}$.
\end{theorem}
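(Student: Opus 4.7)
The plan is to reduce Theorem~\ref{thm:convexresidual} to Theorem~\ref{thm:all} by applying the latter to a countable family of open balls, exploiting the $\rho_X$-convexity of $C_X$ to trivialize the extension machinery. Because $C_X$ is $\rho_X$-convex, we may take $\conv(C_X)=C_X$; then for every $f\in\M(C_X,C_Y)$ the extension set $\E(f)$ collapses to $\{f\}$, and Theorem~\ref{thm:all} specializes to the statement that, for any admissible open $U\subseteq X$, the set $\Q(U)=\{f\in\M(C_X,C_Y):\lip(f|_{C_X\cap U})<1\}$ is $\sigma$-porous. Fix a countable dense subset $\{x_i\}_{i\in\mathbb{N}}$ of $C_X$ and a sequence of rationals $r_j\in(0,D_X/2)$ with $r_j\to 0$. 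Each ball $U_{i,j}:=B(x_i,r_j)\subseteq X$ meets $C_X$ at $x_i$ and satisfies $U_{i,j}\subseteq B(\stern(C_X),D_X)$ (trivially, since $\stern(C_X)=C_X$), so Theorem~\ref{thm:all} applies. Define
\begin{equation*}
  \widetilde{\N}:=\bigcup_{i,j\in\mathbb{N}}\Q(U_{i,j}),
\end{equation*}
which is $\sigma$-porous as a countable union of $\sigma$-porous sets.

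Now let $f\in\M(C_X,C_Y)\setminus\widetilde{\N}$. By~\eqref{eq:Rfint} together with the openness in $C_X$ of each set $G_{q,r}:=\{x\in C_X:\lip(f,x,r)>q\}$, the set $R(f)$ is a countable intersection of open subsets of $C_X$ indexed by $q,r\in\mathbb{Q}\cap(0,1)$. By the Baire category theorem it suffices to show that each $G_{q,r}$ is dense in $C_X$. Fix $q,r\in\mathbb{Q}\cap(0,1)$ and a nonempty open $V\subseteq C_X$. Using the density of $\{x_i\}$ and the fact that $r_j\to 0$, choose $i,j$ with $B(x_i,r_j)\cap C_X\subseteq V$. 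The set $C_X\cap B(x_i,r_j)$ is itself $\rho_X$-convex: the convexity of $C_X$ and of $B(x_i,r_j)$ (Definition~\ref{def:weaklyhyperbolic}~\eqref{smallballs}) each supply a metric segment between any two of its points contained in the respective set, and the uniqueness of metric segments of length $<D_X$ forces these to coincide and lie in the intersection. Since $f\notin\Q(U_{i,j})$, we have $\lip(f|_{C_X\cap B(x_i,r_j)})=1$, so Lemma~\ref{lem:LipOnGamma} (applied with $L=q<1$) produces points $u_0,u_1\in C_X\cap B(x_i,r_j)$ with
\begin{equation*}
  \liminf_{t\to 0^+}\frac{\rho_Y(f((1-t)u_0\oplus t u_1),f(u_0))}{t\rho_X(u_0,u_1)}>q.
\end{equation*}
Writing $y_t:=(1-t)u_0\oplus t u_1\in C_X$, we have $\rho_X(y_t,u_0)=t\rho_X(u_0,u_1)<r$ for all sufficiently small $t>0$, whence $\lip(f,u_0,r)>q$. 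Since $u_0\in B(x_i,r_j)\cap C_X\subseteq V$, this produces the required element of $G_{q,r}\cap V$.

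The main obstacle is conceptual rather than technical: one must recognize that the $\rho_X$-convexity hypothesis on $C_X$ collapses the a priori complicated extension set $\E(f)$ to a singleton, so that Theorem~\ref{thm:all} becomes a clean statement about the local Lipschitz behavior of $f$ itself on open balls in $X$. Once this is observed, Lemma~\ref{lem:LipOnGamma} supplies precisely the bridge that converts the global bound $\lip(f|_{C_X\cap U})=1$ on a neighborhood into the existence of a point $u_0$ witnessing a pointwise Lipschitz constant arbitrarily close to $1$, and the remainder is a routine Baire-category argument enabled by the separability of $C_X$.
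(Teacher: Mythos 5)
Your proposal is correct and follows essentially the same route as the paper's own proof: set $\conv(C_X)=C_X$ so that $\E(f)=\{f\}$, apply Theorem~\ref{thm:all} to a countable family of small rational-radius balls centred at a countable dense set to obtain the $\sigma$-porous set $\widetilde{\N}$, and then, for $f\notin\widetilde{\N}$, use the $\rho_X$-convexity of $C_X\cap B(x_i,r_j)$ (via Definition~\ref{def:weaklyhyperbolic}~\eqref{smallballs}) together with Lemma~\ref{lem:LipOnGamma} to produce a point witnessing $\lip(f,u_0,r)>q$ in any prescribed nonempty open set, and close with the Baire category argument through~\eqref{eq:Rfint}. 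The one small step you leave implicit, which the paper states explicitly, is that $C_X\cap B(x_i,r_j)$ is non-singleton (required by Lemma~\ref{lem:LipOnGamma}); this holds because a non-singleton $\rho_X$-convex set has no isolated points, so it is a trivial addendum rather than a gap.
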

\begin{proof}
  For each open set $U\subseteq X$ of diameter smaller than $D_{X}$ and non-empty intersection with $C_{X}$, we apply Theorem~\ref{thm:all} with $\conv(C_{X})=C_{X}$. Note that $\conv(C_X)=C_{X}$ implies in particular that $U\subset B(\stern(C_X),D_X)$ holds. With these settings we have $\E(f)=\left\{f\right\}$ for all $f\in\M(C_{X},C_{Y})$ and Theorem~\ref{thm:all} asserts that the set
  \begin{equation*}
    \Q(U)=\left\{f\in\M(C_{X},C_{Y})\colon \lip(f|_{U})<1\right\}
  \end{equation*}
  is a $\sigma$-porous subset of $\M(C_{X},C_{Y})$.
  
  Fix a countable dense subset $\Delta$ of $C_{X}$ and define the set $\widetilde{\N}$ by
  \begin{equation*}
    \widetilde{\N}:=\bigcup_{i=1}^{\infty}\Q(U_{i}),
  \end{equation*}
  where $(U_{i})_{i=1}^{\infty}$ is an enumeration of all sets of the form $B(x,r)$ where $x\in \Delta$ and $r\in\mathbb{Q}\cap (0,D_{X}/2)$. It is clear that $\widetilde{\N}$ is a $\sigma$-porous subset of $\M(C_{X},C_{Y})$. 
  
  Let $f\in\mathcal{M}(C_{X},C_{Y})\setminus \widetilde{\N}$. To complete the proof, we need to verify 
  that the set $R(f)$ is a residual subset of $C_{X}$. It suffices to show that each of the open subsets of $C_{X}$ occurring in the intersection in \eqref{eq:Rfint} is a dense subset of $C_{X}$. To this end, fix an open subset $U$ of $X$ such that $U\cap C_{X}\neq\emptyset$. Given $q,r\in\mathbb{Q}\cap(0,1)$, we need to show that the set
  \begin{equation*}
    T_{q,r}:=\left\{x\in C_{X}\colon \lip(f,x,r)>q\right\}
  \end{equation*}
  has non-empty intersection with $U$. Choose $j\geq 1$ so that $U_{j}\subset U$. Since $f\notin\Q(U_{j})$, we have $\lip(f|_{U_{j}})= 1$. Using the condition~\eqref{smallballs} of Definition~\ref{def:weaklyhyperbolic} on the weakly hyperbolic space $X$, we see that $C_{X}\cap U_{j}$ is $\rho_{X}$-convex, as an intersection of two $\rho_{X}$-convex sets. Therefore, we may apply Lemma~\ref{lem:LipOnGamma} with $C=C_{X}\cap U_{j}$ and deduce that there exists a point $u_{0}\in C_{X}\cap U_{j}$ with $\lip(f,u_{0})>q$. We can do this since the set $C_{X}\cap U_{j}$ is non-singleton as open balls contain nontrivial metric segments. Hence $\lip(f,u_{0},r)>q$ and $u_{0}\in U\cap T_{q,r}\neq\emptyset$.
\end{proof}
For the remainder of this section we work towards proving a version of Theorem~\ref{thm:convexresidual} for $\rho_{X}$-star-shaped subsets of weakly hyperbolic spaces. Namely, we  establish the following result:
\begin{theorem}\label{thm:starshapedresidual}
  Suppose that $C_{X}$ is separable and $C_{X}\subseteq \overline{B}(\stern(C_{X}),D_{X})$. Then there exists a $\sigma$-porous set $\widetilde{\N}\subseteq \M(C_{X},C_{Y})$ such that for $f\in \M(C_{X},C_{Y})\setminus\widetilde{\N}$, the set
  \begin{equation*}
    \widehat{R}(f)=\left\{x\in C_{X}\colon \globlip(f,x)=1\right\}
  \end{equation*}
  is a residual subset of $C_{X}$.
\end{theorem}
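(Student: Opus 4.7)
The plan is to mirror the argument of Theorem~\ref{thm:convexresidual}, with the role of $\lip(f,\cdot)$ taken over by the global quantity $\globlip(f,\cdot)$, and to handle the subtlety caused by the fact that, outside the convex setting, a 1-Lipschitz $f$ admits many extensions in $\E(f)$. The set $B(\stern(C_{X}),D_{X})\cap C_{X}$ is dense in $C_{X}$, since, as noted in Section~2.5, $\overline{B}(\stern(C_{X}),D_{X})\setminus B(\stern(C_{X}),D_{X})$ has empty interior in $C_{X}$. I would fix a countable dense subset $\Delta\subseteq B(\stern(C_{X}),D_{X})\cap C_{X}$ and, for each $x\in\Delta$, a positive number $r(x)>0$ with $B(x,r(x))\subseteq B(\stern(C_{X}),D_{X})$, and then enumerate the balls $B(x,r)$ with $x\in\Delta$ and $r\in\mathbb{Q}\cap(0,\min\{r(x),D_{X}/2\})$ as $(U_{i})_{i\geq 1}$. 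Each $U_{i}$ satisfies the hypotheses of Theorem~\ref{thm:all}, so $\Q(U_{i})$ is $\sigma$-porous, and $\widetilde{\N}:=\bigcup_{i}\Q(U_{i})$ is a $\sigma$-porous subset of $\M(C_{X},C_{Y})$. By density of $\Delta$ and the choice of the radii $r(x)$, for any open $V\subseteq X$ with $V\cap C_{X}\neq\emptyset$ some enumerated $U_{j}$ lies in $V$ and still meets $C_{X}$.

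Fix $f\notin\widetilde{\N}$. By the representation \eqref{eq:Rfbarint} and the Baire category theorem, it suffices to show that for each $q\in\mathbb{Q}\cap(0,1)$, the open set $\{x\in C_{X}:\globlip(f,x)>q\}$ is dense in $C_{X}$. Assume for contradiction that this fails, so that some open $V\subseteq X$ meets $C_{X}$ while $\globlip(f,x)\leq q$ for every $x\in V\cap C_{X}$. Pick an enumerated $U_{j}\subseteq V$ with $U_{j}\cap C_{X}\neq\emptyset$. Since $f\notin\Q(U_{j})$, one has $\inf_{F\in\E(f)}\lip(F|_{U_{j}})=1$, which together with the fact that every $F\in\E(f)$ is 1-Lipschitz forces $\lip(F|_{U_{j}})=1$ for every $F\in\E(f)$. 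On the other hand, the contradiction assumption and the definition of $\globlip$ together yield the strong inequality $\rho_{Y}(f(x),f(y))\leq q\rho_{X}(x,y)$ for every $x\in C_{X}\cap U_{j}$ and every $y\in C_{X}$. The plan is to exploit this inequality and the Lipschitz extension property of $\ell_{\infty}(\Omega)$ to construct some $\tilde F\in\E(f)$ and some smaller enumerated ball $U_{j'}=B(x_{j},r')\subseteq U_{j}$ such that $\lip(\tilde F|_{U_{j'}})<1$: this would force $f\in\Q(U_{j'})\subseteq\widetilde{\N}$, a contradiction.

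The main obstacle is this extension construction. A natural attempt is the McShane-type formula
\[
  \tilde F_{\omega}(z)=\inf_{y\in C_{X}}\bigl[f_{\omega}(y)+\alpha(y)\rho_{X}(z,y)\bigr],\qquad \alpha(y)=\begin{cases}q & y\in C_{X}\cap U_{j},\\ 1 & y\in C_{X}\setminus U_{j},\end{cases}
\]
which, using the inequality $\rho_{Y}(f(x),f(y))\leq q\rho_{X}(x,y)$ for $x\in C_{X}\cap U_{j}$, $y\in C_{X}$, yields a well-defined $\tilde F$ with $\tilde F|_{C_{X}}=f$ that is 1-Lipschitz on $\conv(C_{X})$. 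The delicate step is to upgrade this to $\lip(\tilde F|_{U_{j'}})\leq q+\varepsilon<1$ on a sufficiently interior ball $U_{j'}$: for $z,z'\in U_{j'}$, one must analyse whether the witness $y$ realising the infimum defining $\tilde F(z)$ lies in $C_{X}\cap U_{j}$—contributing the $q$-Lipschitz bound—or in $C_{X}\setminus U_{j}$, in which case $\rho_{X}(z,y)\geq r-r'$ keeps this witness away from $U_{j'}$, so its contribution can be dominated if $r'$ is chosen small enough in terms of $r$ and $q$. Carrying this through is the hard part of the proof, and may require refining the McShane formula by a partition-of-unity glueing between a $q$-Lipschitz extension of $f|_{C_{X}\cap\overline{U}_{j}}$ on a neighborhood of $\overline{U}_{j'}$ and a 1-Lipschitz extension elsewhere, in order to beat the Lipschitz contribution coming from points of $C_{X}\setminus U_{j}$ across the boundary.
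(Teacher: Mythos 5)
Your overall plan matches the paper's proof exactly: enumerate countably many small balls contained in $B(\stern(C_{X}),D_{X})$, invoke Theorem~\ref{thm:all} to make each $\Q(U_{i})$ $\sigma$-porous, and reduce non-residuality of $\widehat{R}(f)$ to membership in some $\Q(U_{j})$ by producing a $1$-Lipschitz extension of $f$ that is a strict contraction on a small ball. The one genuine gap is that you leave the extension step — the technical heart of the argument — unproven, and you express uncertainty about whether your McShane-type construction works. The paper isolates precisely this step as Lemma~\ref{lemma:fortsetzungvermutung}: it uses the modified McShane formula $F_{\omega}(y)=\inf_{z\in E}\bigl[f_{\omega}(z)+\globlip(f_{\omega},z)\,d(z,y)\bigr]$, with the slope $\globlip(f_{\omega},z)$ varying pointwise, and the key dichotomy (the Claim surrounding \eqref{eq:badz}) shows that any witness $z$ that is ``competitive'' for $y\in B(u_{0},r/N)$ either lies in $E\cap B(u_{0},r)$ or has $\globlip(f_{\omega},z)\leq q'$, which yields $\lip(F|_{B(u_{0},r/N)})\leq q'$.

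Your binary-slope variant $\alpha(y)\in\{q,1\}$ does in fact work and needs no partition-of-unity gluing: for $y\in B(u_{0},s)$ and $z\in C_{X}\setminus U_{j}$, the bound $\globlip(f_{\omega},u_{0})\leq q$ and the triangle inequality give
\[
  f_{\omega}(z)+\rho_{X}(z,y)\geq f_{\omega}(u_{0})-q\rho_{X}(u_{0},z)+\rho_{X}(z,y)
  > f_{\omega}(u_{0})+q\rho_{X}(u_{0},y)
\]
once $s<\tfrac{(1-q)r}{1+q}$, since then $(1-q)\rho_{X}(z,y)\geq(1-q)(r-s)>2qs$. Thus every near-optimal witness for points of $B(u_{0},s)$ lies in $C_{X}\cap U_{j}$, where the slope is $q$, and the usual two-sided estimate gives $\lip(\tilde F|_{B(u_{0},s)})\leq q<1$ directly. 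So the idea is right and even slightly simpler than the paper's (you get $q$ rather than $q'$); the missing content is carrying through this estimate, which you should do explicitly rather than merely flagging it as ``the hard part.''
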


\begin{remark}
  Note that for contractive mappings in the sense of Rakotch the sets $R(f)$ and $\widehat{R}(f)$ coincide. Indeed, if $f$ is contractive in the sense of Rakotch, there exists a decreasing function $\varphi\colon (0,\infty)\to [0,1)$ such that $\rho_Y(f(x),f(y))\leq \varphi(\rho_X(x,y))\,\rho_X(x,y)$ for all distinct points $x,y\in C_X$. In other words
  \[
    \frac{\rho_Y(f(x),f(y))}{\rho_X(x,y)} \leq \varphi(\rho_X(x,y))
  \]
  for $x\neq y$, which shows that the expression on the left-hand side can only approach one when $y$ approaches $x$.
  With minor modifications, the proof of~\cite[Theorem~4]{Rei2005GenericityPorosity} shows that, if $X$ and $Y$ are
  hyperbolic spaces and $C_X\subseteq X$ and $C_Y\subseteq Y$ are non-empty, non-singleton, bounded, closed and $\rho_X$- 
  and $\rho_Y$-star-shaped subsets, respectively, there is a $\sigma$-porous subset $\bar{\mathcal{N}}\subset 
  \mathcal{M}(C_X,C_Y)$ such that all mappings in its complement are contractive in the sense of Rakotch.
\end{remark}
In view of the above remark, we can get the following corollary to Theorem~\ref{thm:starshapedresidual}, which is a 
strengthening of Theorems~\ref{thm:convexresidual} and~\ref{thm:starshapedresidual} restricted to the case where $X$ and $Y$ are hyperbolic spaces and $C_{X},C_{Y}$ are bounded. In particular, although we have seen that Lipschitz mappings on a star-shaped set~$C$ may not satisfy $\lip(f)=\sup_{x\in C}\lip(f,x)$, the following corollary indicates that typical nonexpansive mappings retain this property. 
\begin{corollary}
  Suppose $X$ and $Y$ are complete hyperbolic spaces, $C_{X}$ is separable and bounded and $C_Y$ is bounded. Then there exists a 
  $\sigma$-porous set $\widetilde{\mathcal{N}}\subseteq \mathcal{M}(C_{X},C_{Y})$ such that for every $f\in\mathcal{M}(C_{X},C_{Y})\setminus 
  \widetilde{\mathcal{N}}$, 
  the set
  \begin{equation*}
    R(f)=\left\{x\in C_{X}\colon \lip(f,x)=1\right\}
  \end{equation*} 
  is a residual subset of $C_{X}$.
\end{corollary}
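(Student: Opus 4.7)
The plan is to combine Theorem~\ref{thm:starshapedresidual} with the Rakotch-contractivity statement from the remark immediately preceding the corollary: outside one $\sigma$-porous set of mappings $\widehat{R}(f)$ is already residual, and outside a second $\sigma$-porous set the equality $R(f)=\widehat{R}(f)$ holds. Their union will serve as the desired $\widetilde{\mathcal{N}}$.

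First, since $X$ and $Y$ are hyperbolic, metric segments in the defining families are unique, so we may take $D_X=D_Y=\infty$. In particular the containments $C_X\subseteq\overline{B}(\stern(C_X),D_X)$ (needed in Theorem~\ref{thm:starshapedresidual}) and $C_Y\subseteq B(\stern(C_Y),D_Y)$ (needed in Hypotheses~\ref{hypotheses}) are automatic. With $C_X$ additionally separable, Theorem~\ref{thm:starshapedresidual} produces a $\sigma$-porous set $\widetilde{\mathcal{N}}_1\subseteq\mathcal{M}(C_X,C_Y)$ such that $\widehat{R}(f)$ is residual in $C_X$ for every $f\notin\widetilde{\mathcal{N}}_1$.

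Second, the remark preceding the corollary — whose hypotheses (hyperbolic ambient spaces with bounded, closed, non-singleton, star-shaped subsets) match ours exactly — provides a $\sigma$-porous set $\bar{\mathcal{N}}\subseteq\mathcal{M}(C_X,C_Y)$ outside which every mapping is Rakotch contractive. The same remark observes that Rakotch contractivity forces $R(f)=\widehat{R}(f)$: if a sequence $(y_n)$ with $y_n\neq x$ witnesses $\globlip(f,x)=1$, then $\rho_Y(f(x),f(y_n))/\rho_X(x,y_n)\leq\varphi(\rho_X(x,y_n))<1$ whenever $\rho_X(x,y_n)>0$, so the ratio can approach $1$ only if $y_n\to x$, and hence $\lip(f,x)=1$ as well. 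Setting $\widetilde{\mathcal{N}}:=\widetilde{\mathcal{N}}_1\cup\bar{\mathcal{N}}$, which remains $\sigma$-porous as a finite union, finishes the argument: any $f\notin\widetilde{\mathcal{N}}$ satisfies $R(f)=\widehat{R}(f)$, and this set is residual in $C_X$.

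There is no substantive obstacle; the only verifications are the routine observations that hyperbolicity eliminates the $D_X,D_Y$ restrictions appearing in Theorem~\ref{thm:starshapedresidual} and Hypotheses~\ref{hypotheses}, and that the boundedness assumptions on $C_X$ and $C_Y$ in the corollary are precisely what is needed to invoke the Rakotch result quoted in the remark.
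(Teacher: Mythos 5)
Your proof is correct and matches the approach the paper intends: the text immediately preceding the corollary says it follows from Theorem~\ref{thm:starshapedresidual} \emph{in view of} the Rakotch remark, which is exactly the union $\widetilde{\mathcal{N}}=\widetilde{\mathcal{N}}_1\cup\bar{\mathcal{N}}$ you construct. Your routine verifications (hyperbolicity gives $D_X=D_Y=\infty$, making the star-shapedness inclusions automatic; boundedness of $C_X,C_Y$ is what the quoted Rakotch porosity result requires; and Rakotch contractivity forces $R(f)=\widehat{R}(f)$) are all sound.
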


For the proof of Theorem~\ref{thm:starshapedresidual}, we require an extension lemma for Lipschitz mappings. 
\begin{lemma}\label{lemma:fortsetzungvermutung}
  Let $(Z,d)$ and $(W,\rho)$ be metric spaces, $E\subseteq Z$ and $\Omega$ be a set such that $W\subseteq
  \ell_{\infty}(\Omega)$. Let $f:E\to W$ be a $1$-Lipschitz mapping, $u_{0}\in E$, $r>0$, $q\in(0,1)$, $q'\in(q,1)$
  and suppose that for every $x\in E\cap B(u_{0},r)$, we have
  \begin{equation*}
    \globlip(f,x)\leq q.
  \end{equation*}
  Then there exists a $1$-Lipschitz extension $F:Z\to\ell_{\infty}(\Omega)$ of $f$ and a number $s\in(0,r)$ 
  such that $\lip(F|_{B(u_{0},s)})\leq q '$.
\end{lemma}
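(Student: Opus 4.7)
The plan is to construct $F$ by gluing together two auxiliary extensions of $f$: a $q$-Lipschitz extension of $f|_{E\cap B(u_{0},r)}$ and the original map $f$ on $E$.

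First I would observe that the hypothesis $\globlip(f,x)\leq q$ for $x\in E\cap B(u_{0},r)$ implies in particular that $f|_{E\cap B(u_{0},r)}$ is $q$-Lipschitz. Invoking the Lipschitz extension property of $\ell_{\infty}(\Omega)$ mentioned in the preliminaries, I obtain a $q$-Lipschitz mapping $G\colon Z\to\ell_{\infty}(\Omega)$ with $G|_{E\cap B(u_{0},r)}=f|_{E\cap B(u_{0},r)}$. Since $u_{0}\in E\cap B(u_{0},r)$, the pivot identity $G(u_{0})=f(u_{0})$ holds. Next I would pick $s\in(0,r)$ satisfying $s(1+q)<r(1-q)$ and form the auxiliary map $\tilde{F}\colon E\cup\overline{B}(u_{0},s)\to\ell_{\infty}(\Omega)$ defined by $\tilde{F}=f$ on $E$ and $\tilde{F}=G$ on $\overline{B}(u_{0},s)$. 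Because $E\cap\overline{B}(u_{0},s)\subseteq E\cap B(u_{0},r)$ and $G$ coincides with $f$ there, the definition is consistent.

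The main obstacle is to verify that $\tilde{F}$ is $1$-Lipschitz on $E\cup\overline{B}(u_{0},s)$. Most pairings reduce trivially to the $q$-Lipschitz bound for $G$ or the $1$-Lipschitz bound for $f$; the delicate case is $x\in\overline{B}(u_{0},s)$ and $y\in E\setminus B(u_{0},r)$, where we must compare values of $G$ and $f$ that need not agree. Here the pivot $u_{0}$ is the key: using $G(u_{0})=f(u_{0})$, the $q$-Lipschitz property of $G$, and the hypothesis $\globlip(f,u_{0})\leq q$, I estimate
\begin{equation*}
\|G(x)-f(y)\|_{\infty}\leq\|G(x)-G(u_{0})\|_{\infty}+\|f(u_{0})-f(y)\|_{\infty}\leq q\bigl(s+d(u_{0},y)\bigr).
\end{equation*}
The triangle inequality gives $d(x,y)\geq d(u_{0},y)-s$, so it suffices to show $q(s+d(u_{0},y))\leq d(u_{0},y)-s$, i.e.\ $s(1+q)\leq(1-q)\,d(u_{0},y)$. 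Since $y\notin B(u_{0},r)$ forces $d(u_{0},y)\geq r$, the choice of $s$ yields the required estimate.

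Finally, I would apply the Lipschitz extension property of $\ell_{\infty}(\Omega)$ once more to extend $\tilde{F}$ to a $1$-Lipschitz mapping $F\colon Z\to\ell_{\infty}(\Omega)$. Since $F$ coincides with $G$ on $\overline{B}(u_{0},s)$, the restriction $F|_{B(u_{0},s)}$ is $q$-Lipschitz, hence $q'$-Lipschitz because $q<q'$, completing the proof.
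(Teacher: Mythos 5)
Your proof is correct and takes a genuinely different route from the paper's. The paper constructs an explicit componentwise extension
\[
F_{\omega}(y):=\inf\left\{f_{\omega}(z)+\globlip(f_{\omega},z)\,d(z,y)\colon z\in E\right\},
\]
a pointwise-Lipschitz-constant modification of the McShane--Whitney formula, and then proves a dichotomy claim: for $y$ near $u_{0}$ and any $z\in E$, either $z$ cannot compete with $u_{0}$ in the infimum, or $\globlip(f_{\omega},z)\leq q'$. You instead perform a gluing argument: extend $f|_{E\cap B(u_{0},r)}$ to a globally $q$-Lipschitz $G$, splice $G$ on $\overline{B}(u_{0},s)$ to $f$ on $E$, verify compatibility via the pivot estimate through $u_{0}$ (which needs only $\globlip(f,u_{0})\leq q$ and a size constraint $s(1+q)<r(1-q)$), and then apply the abstract $\ell_{\infty}$-extension property once more. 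Your approach is arguably cleaner and yields a slightly stronger conclusion --- $\lip(F|_{B(u_{0},s)})\leq q$ rather than $\leq q'$, so the auxiliary parameter $q'$ is not really needed --- at the cost of two applications of the abstract extension theorem where the paper's proof is fully constructive and exhibits the extension by a single formula. Both arguments are sound; the paper's gives finer control over the specific extension produced, while yours is shorter and more modular.
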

\begin{proof}
  Using $W\subseteq \ell_{\infty}(\Omega)$, we view $f$ as a mapping from $E$ to $\ell_{\infty}(\Omega)$. Given $\omega\in\Omega$, a set $S\subseteq Z$ and a mapping $h\colon S\to \ell_{\infty}(\Omega)$ we let $h_{\omega}\colon S\to \R$ be defined by $h_{\omega}(x)=h(x)(\omega)$ for all $x\in S$. In what follows we will frequently use the identities
  \begin{equation}\label{eq:lipcstcomp-glob}
    \lip(h)=\sup\left\{\lip(h_{\omega})\colon \omega\in\Omega\right\},\qquad \widehat{\lip}(h,x)=\sup\left\{\widehat{\lip}(h_{\omega},x)\colon\omega\in\Omega\right\},
  \end{equation}
  which are easily derived from the definitions of the Lipschitz constants and the $\ell_{\infty}$~norm. We define the mapping $F: Z\to \ell_{\infty}(\Omega)$ componentwise by
  \begin{equation*}
    F_{\omega}(y):=\inf\left\{f_{\omega}(z)+\globlip(f_{\omega},z)d(z,y)\colon z\in E\right\},\quad y\in Z,\,\omega\in\Omega.
  \end{equation*}
  This mapping is a modification of the standard Lipschitz extension of $f$, as defined 
  in~\cite[Chapter~1]{BL2000GeomtericNonlinear}.  Let us verify that this mapping fulfills all the desired conditions. Firstly, we show that $F$ is an extension of $f$. Fix $\omega\in\Omega$. Letting $y\in E$ we observe from the definition that $F_{\omega}(y)\leq f_{\omega}(y)$. Moreover, given $\varepsilon>0$, we can choose $z\in E$ such that
  \begin{equation}\label{eq:zapp}
    F_{\omega}(y)\geq f_{\omega}(z)+\globlip(f_{\omega},z)d(z,y)-\varepsilon.
  \end{equation}
  This leads to the observation
  \begin{align*}
    F_{\omega}(y)&\geq f_{\omega}(z)+\globlip(f_{\omega},z)d(z,y)-\varepsilon\\
                 &\geq f_{\omega}(y)-\globlip(f_{\omega},z)d(z,y)+\globlip(f_{\omega},z)d(z,y)-\varepsilon\\
                 &=f_{\omega}(y)-\varepsilon.
  \end{align*}
  We conclude that $F_{\omega}(y)=f_{\omega}(y)$, as required.
  
  We now show that $F$ is $1$-Lipschitz. Let $\omega\in\Omega$ and $y_{1},y_{2}\in Z$. Given $\varepsilon>0$, we can choose $z_{2}\in E$ so that \eqref{eq:zapp} be satisfied with $y=y_{2}$ and $z=z_{2}$. From this we deduce
  \begin{align*}
    F_{\omega}(y_{1})-F_{\omega}(y_{2})&\leq (f_{\omega}(z_{2})+\globlip(f_{\omega},z_{2})d(z_{2},y_{1}))-(f_{\omega}(z_{2})+\globlip(f_{\omega},z_{2})d(z_{2},y_{2})-\varepsilon)\\
                                       &\leq\globlip(f_{\omega},z_2)d(y_{1},y_{2})+\varepsilon\leq d(y_{1},y_{2})+\varepsilon,
  \end{align*}
  where the final inequality uses $\lip(f_{\omega})\leq\lip(f)\leq1$.
  Similarly, we can show that $F_{\omega}(y_{2})-F_{\omega}(y_{1})\leq d(y_{1},y_{2})+\varepsilon$. We have shown that $\lip(F_{\omega})\leq 1$ for all $\omega\in\Omega$. Thus, by \eqref{eq:lipcstcomp-glob} we get that $\lip(F)\leq 1$. It only remains to verify that $F$ is locally a strict contraction around $u_{0}$. For this we will need the following claim. 
  \begin{claim}\label{lemma:dich}
    There exists $N>1$ such that for every $y\in B(u_{0},r/N)$, every $z\in E$ and every $\omega\in\Omega$, at least one of the following statements holds:
    \begin{enumerate}[(i)]
    \item $f_{\omega}(z)+\globlip(f_{\omega},z)d(z,y)> f_{\omega}(u_{0})+\globlip(f_{\omega},u_{0})d(u_{0},y)$.
    \item $\globlip(f_{\omega},z)\leq q'$.
    \end{enumerate}
  \end{claim}
  \begin{proof}
    We choose $N$ large enough so that 
    \begin{equation*}
      \frac{n+1}{n-1}\leq \frac{q'}{q}
    \end{equation*}
    for all $n\geq N$.  We set $s=r/N$ and fix $y\in B(u_{0},s)$ and $\omega\in\Omega$. If $z\in E\cap B(u_{0},r)$, then statement~(ii) already holds, because $\widehat{\lip}(f_{\omega},z)\leq \widehat{\lip}(f,z)\leq q<q'$, and there is nothing to prove. Therefore, we proceed by fixing a point $z\in E\setminus B(u_{0},r)$ and supposing that $z$ fails to satisfy the inequality of (i). In other words, we have
    \begin{equation}\label{eq:badz}
      f_{\omega}(z)+\globlip(f_{\omega},z)d(z,y)\leq f_{\omega}(u_{0})+\globlip(f_{\omega},u_{0})d(u_{0},y).
    \end{equation}
    We complete the proof by showing that statement (ii) holds for $z$. The left-hand side of~\eqref{eq:badz} can be bounded from below by the expression
    \begin{align*}
      f_{\omega}(u_{0})-q d(z,u_{0})&+\globlip(f_{\omega},z)(d(z,u_{0})-d(u_{0},y))\geq\\ &f_{\omega}(u_{0})-q d(z,u_{0})+\globlip(f_{\omega},z)(d(z,u_{0})-s).
    \end{align*}
    Moreover, we can bound the right-hand side of \eqref{eq:badz} from above by $f_{\omega}(u_{0})+q s$. We conclude from this that
    \begin{equation*}
      f_{\omega}(u_{0})-q d(z,u_{0})+\globlip(f_{\omega},z)(d(z,u_{0})-s)\leq f_{\omega}(u_{0})+q s.
    \end{equation*}
    Rearranging this inequality, we obtain
    \begin{equation*}
      \globlip(f_{\omega},z)\leq \frac{ q(d(z,u_{0})+s)}{d(z,u_{0})-s}=q\cdot\frac{n+1}{n-1},
    \end{equation*}
    where $n:=d(z,u_{0})/s\geq r/s= N$ and $d(z,u_0)-s\geq r-s>0$ since $z\not\in B(u_0,r)$. The last expression is bounded from above by $q'$.
  \end{proof}
  The proof of Lemma~\ref{lemma:fortsetzungvermutung} is now completed by proving the following claim:
  \begin{claim}
    Let $N$ be given by the statement of the previous claim. Then \[\lip(F|_{B(u_{0},r/N)})\leq q'.\]
  \end{claim}
  \begin{proof}
    Fix $y_{1},y_{2}\in B(u_{0},r/N)$ and $\omega\in\Omega$. Given $\varepsilon>0$, we can choose $z_{2}\in E$ such that~\eqref{eq:zapp} is satisfied with $y=y_{2}$, $z=z_{2}$ and 
    \begin{equation*}
      f_{\omega}(z_{2})+\globlip(f_{\omega},z_{2})d(z_{2},y_{2})\leq f_{\omega}(u_{0})+\globlip(f_{\omega},u_{0})d(u_{0},y_{2}).
    \end{equation*}
    Then by the first claim we have $\globlip(f_{\omega},z_{2})\leq q'$. We conclude that
    \begin{align*}
      F_{\omega}(y_{1})-F_{\omega}(y_{2})&\leq (f_{\omega}(z_{2})+\globlip(f_{\omega},z_{2})d(z_{2},y_{1}))-(f_{\omega}(z_{2})+\globlip(f_{\omega},z_{2})d(z_{2},y_{2})-\varepsilon)\\
                                         &\leq \globlip(f_{\omega},z_{2})d(y_{1},y_{2})+\varepsilon\leq q' d(y_{1},y_{2})+\varepsilon.
    \end{align*}
    Similarly, we can show that $F_{\omega}(y_{2})-F_{\omega}(y_{1})\leq q' d(y_{1},y_{2})+\varepsilon$. The above argument establishes that $\lip(f_{\omega}|_{B(u_{0},r/N)})\leq q'$ for every $\omega\in\Omega$. The conclusion of the claim follows.
  \end{proof}
  This completes the proof of Lemma~\ref{lemma:fortsetzungvermutung}.
\end{proof}
\begin{proof}[Proof of Theorem~\ref{thm:starshapedresidual}]
  Fix a countable dense subset $\Delta$ of $C_{X}$ and let $(U_{i})_{i=1}^{\infty}$ be an enumeration of all sets of 
  the form $B(x,r)$, where $x\in \Delta$ and $r\in\mathbb{Q}\cap(0,1)$ with $B(x,r)\subseteq B(\stern(C_{X}),D_{X})$. 
  By Theorem~\ref{thm:all}, each set $\Q(U_{i})$ is $\sigma$-porous.
  
  Suppose that $f\in\M(C_{X},C_{Y})$ is such that $\widehat{R}(f)$ is not residual. We complete the proof by showing 
  that $f\in\widetilde{\N}:=\bigcup_{i=1}^{\infty}\Q(U_{i})$. 
  
  From the assumption that $\widehat{R}(f)$ is not residual, we deduce that for some $q\in\mathbb{Q}\cap (0,1)$, 
  the open subset of $C_{X}$
  \begin{equation*}
    T_{q}:=\left\{x\in C_{X}\colon \globlip(f,x)>q\right\},
  \end{equation*}
  which occurs in the intersection in \eqref{eq:Rfbarint}, is not dense in $C_{X}$. Choose an open subset $U$ of 
  $X$ such $U\cap C_{X}\neq\emptyset$ and $U\cap T_{q}=\emptyset$. Then we have $\globlip(f,x)\leq q$ for all 
  $x\in C_{X}\cap U$. Using the inclusion $C_{X}\subseteq \overline{B}(\stern(C_{X}),D_{X})$ and the fact that the set 
  $\overline{B}(\stern(C_{X}),D_{X})\setminus B(\stern(C_{X}),D_{X})$ has empty interior in $C_{X}$, we can find 
  $u_{0}\in U\cap C_{X}\cap B(\stern(C_{X}),D_{X})$ and then choose $r>0$ such that $B(u_{0},r)\subseteq U\cap 
  B(\stern(C_{X}),D_{X})$. Applying Lemma~\ref{lemma:fortsetzungvermutung} with $E=C_{X}$, $Z=\conv(C_{X})$ and
  $W=C_{Y}$, we can find an extension $F:\conv(C_{X})\to \ell_{\infty}(\Omega)$ and an open ball $B(u_{0},s)\subseteq 
  B(u_{0},r)$ such that $\lip(F|_{B(u_{0},s)})<1$. 
  Choosing now $i\geq 1$ such that $U_{i}\subseteq B(u_{0},s)$, we have $\lip(F|_{U_{i}})<1$ and $f\in\Q(U_{i})$.
\end{proof}

\begin{remark}
  In the case where at least one of the sets $C_X$ and $C_Y$ is bounded, a more natural metric on 
  $\mathcal{M}(C_X,C_Y)$ is the metric of uniform convergence. More generally, we consider the space
  \[
  \mathcal{M}_{B}(C_X,C_Y) := \{f\colon C_X\to C_Y \colon \lip(f)\leq 1 \text{ and } f \text{ is bounded}\}
  \]
  of bounded mappings, that is, mappings where $f(C_X)\subset C_Y$ is bounded, and equip it with the metric
  \[
  d_\infty(f,g) := \sup \left\{d(f(x),g(x))\colon x\in C_X\right\}
  \]
  of uniform convergence.
  
  If the set $C_X$ is bounded, then $(\mathcal{M}(C_X,C_Y),d_\theta)$ and $(\mathcal{M}_B(C_X,C_Y),d_\infty)$ coincide as topological spaces. The inequalities
  \[
  \frac{\rho_Y(f(x),g(x))}{1+\rho_X(x,\theta)} \leq \rho_Y(f(x),g(x)) \leq (1+\diam(C_X)) \frac{\rho_Y(f(x),g(x))}{1+\rho_X(x,\theta)}
  \]
  show that in this case the metrics $d_\theta$ and $d_\infty$ are even Lipschitz equivalent. 

  With a small modification of the proof of Theorem~\ref{thm:all} we can also show that under the same assumptions, 
  the set
  \begin{equation*}
    \Q_B(U)=\left\{f\in \M_B(C_{X},C_{Y})\colon \inf_{F\in\E(f)}\lip(F|_{U})<1\right\}
  \end{equation*}
  is a $\sigma$-porous subset of $\mathcal{M}_B(C_X,C_Y)$.
  Since Theorem~\ref{thm:all} is the basis for the other porosity results in this section, we may deduce that
  the set $\mathcal{N}_B(C_{X},C_Y)$ of bounded strict contractions is a $\sigma$-porous subset of $\mathcal{M}_B(C_X,C_Y)$
  and that, in the separable setting, typical bounded nonexpansive mappings attain the maximal Lipschitz constant $1$ at typical points of their domain. In other words, all theorems in this section remain valid, if we replace $\M(C_X,C_Y)$ by $\M_B(C_X,C_Y)$ and $\N(C_X,C_Y)$ by $\N_B(C_X,C_Y)$.

  Let us conclude this remark by commenting on the necessary modification of the proofs in Section~4.
  Since Lemma~\ref{lemma:pert} actually implies that the perturbed mapping is $\varepsilon$-close to the original one not only with respect to $d_\theta$ but also with respect to $d_{\infty}$, we only have to notice that starting with a bounded mapping also the perturbed mapping we obtain is bounded and that in $\M_B(C_X,C_Y)$ the inclusion $B_\infty(f,\alpha\varepsilon)\subset B_\theta (f,\alpha\varepsilon)$ holds for all $f\in\M_B(C_X,C_Y)$ and all $\alpha,\varepsilon>0$, in order to get the results for bounded mappings. 
\end{remark}

\section{Proof of Theorem~\ref*{thm:all}}
In the present section we prove Theorem~\ref{thm:all}. Let $X$, $Y$, $C_{X}$, $C_{Y}$, $\conv(C_{X})$, $\Omega$,
$\theta$, $\M(C_{X},C_{Y})$, $\N(C_{X},C_{Y})$ and $\E(f)$ satisfy Hypotheses~\ref{hypotheses}. For the reader's convenience, 
we repeat the statement of Theorem~\ref{thm:all}:
\begin{theorem*}
  Let $U$ be an open subset of $X$ with $U\cap C_{X}\neq\emptyset$ and $U\subseteq B(\stern(C_{X}),D_{X})$. Then the set
  \begin{equation*}
    \Q(U)=\left\{f\in \M(C_{X},C_{Y})\colon \inf_{F\in\E(f)}\lip(F|_{U})<1\right\}
  \end{equation*}
  is $\sigma$-porous in $\M(C_{X},C_{Y})$.
\end{theorem*}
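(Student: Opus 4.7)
The plan is to decompose
\[
\Q(U) = \bigcup_{n \geq 1} \Q_n(U), \qquad \Q_n(U) := \left\{f \in \M(C_X, C_Y) : \inf_{F \in \E(f)} \lip(F|_U) \leq 1 - 1/n\right\},
\]
and prove each $\Q_n(U)$ is porous at each of its points. Using $U \cap C_X \neq \emptyset$ and $U \subseteq B(\stern(C_X), D_X)$, fix a point $u_0 \in U \cap C_X$ together with $x^* \in \stern(C_X)$ satisfying $\rho_X(u_0, x^*) < D_X$ and $[u_0, x^*] \subseteq C_X$. Since $U$ is open, for every sufficiently small $s > 0$ the point $u_s := (1 - s) u_0 \oplus s\, x^*$ lies in $U \cap C_X$ and $[u_0, u_s] \subseteq U \cap C_X$. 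These points will serve as witnesses of a large Lipschitz constant after perturbation.

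Fix $n \geq 1$ and $f \in \Q_n(U)$. The heart of the argument is a perturbation lemma: for every small $\varepsilon > 0$ one constructs $g \in \M(C_X, C_Y)$ with $d_\theta(f, g) \leq \varepsilon/2$ together with an admissible $s > 0$ such that
\[
\rho_Y(g(u_0), g(u_s)) \geq \left(1 - \tfrac{1}{2n}\right)\rho_X(u_0, u_s).
\]
The construction uses (i) the star-shapedness of $C_Y$: picking $y^* \in \stern(C_Y)$ with $[f(u_0), y^*] \subseteq C_Y$, available from the hypothesis $C_Y \subseteq B(\stern(C_Y), D_Y)$, one can slide the images of $u_0$ and $u_s$ in opposite directions along geodesics in $C_Y$; and (ii) the weakly hyperbolic structure of $X$, whose temperate-curvature inequality~\eqref{eq:defdelta} permits propagating the local modification to the rest of $C_X$ while keeping the result 1-Lipschitz and $O(\varepsilon)$-close to $f$ in $d_\theta$.

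Granted such $g$, choose $\alpha > 0$ small enough, depending on $n$, $\rho_X(u_0, \theta)$, $\rho_X(u_s, \theta)$ and $\rho_X(u_0, u_s)$, so that any $h \in B(g, \alpha\varepsilon)$ satisfies $\rho_Y(h(u_i), g(u_i)) \leq \alpha\varepsilon(1 + \rho_X(u_i, \theta))$ for $i \in \{0, s\}$; the triangle inequality then gives
\[
\rho_Y(h(u_0), h(u_s)) > \left(1 - \tfrac{1}{n}\right)\rho_X(u_0, u_s).
\]
Every $H \in \E(h)$ is a 1-Lipschitz extension to $\conv(C_X) \supseteq \{u_0, u_s\} \subseteq U$, so $\lip(H|_U) \geq \rho_Y(h(u_0), h(u_s))/\rho_X(u_0, u_s) > 1 - 1/n$. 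Taking the infimum over $H$ gives $h \notin \Q_n(U)$, so $B(g, \alpha\varepsilon) \cap \Q_n(U) = \emptyset$. Since $g \in B(f, \varepsilon)$, this verifies the porosity of $\Q_n(U)$ at $f$ with constants $(\varepsilon_0, \alpha)$ depending only on $f$ and $n$.

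The main obstacle is the perturbation lemma itself: the mapping $g$ must simultaneously be 1-Lipschitz, send $C_X$ into $C_Y$, lie within $\varepsilon/2$ of $f$ in $d_\theta$, and stretch $\rho_Y(g(u_0), g(u_s))$ almost to $\rho_X(u_0, u_s)$. The first three requirements are in direct tension with the fourth, and reconciling them is precisely where both the weakly hyperbolic structure of $X$ (controlled local modification, thanks to~\eqref{eq:defdelta} and the convexity of small balls) and the star-shaped, temperate-curvature geometry of $C_Y$ (ensuring a geodesic along which to slide target values while staying inside $C_Y$) are indispensable.
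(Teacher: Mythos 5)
Your decomposition $\Q(U)=\bigcup_n\Q_n(U)$ and the overall porosity template are fine, but the ``perturbation lemma'' you appeal to is the entire content of the theorem, and as stated it is false: there is no way in general to produce, within $d_\theta$-distance $\varepsilon/2$ of an arbitrary $f\in\Q_n(U)$, a nonexpansive $g\colon C_X\to C_Y$ with $\rho_Y(g(u_0),g(u_s))\geq (1-\tfrac{1}{2n})\rho_X(u_0,u_s)$ for some admissible $s$.

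To see the obstruction, note that the demand is a \emph{near-maximal absolute stretch ratio} on the pair $(u_0,u_s)$, independently of what stretch $f$ already exhibits there. Suppose $f$ has very small local Lipschitz constant along $\Gamma=[u_0,x^*]$, say $\lip(f|_\Gamma)=a$ with $a$ tiny, or even $a=0$. The only perturbations available that keep $g$ in $\M(C_X,C_Y)$ are: (1) precomposing an extension $F$ with a map $\beta(x)=(1-\lambda(x))x\oplus\lambda(x)x_0$ in the domain, which rescales the parametrisation of $\Gamma$ and therefore gives at best a \emph{multiplicative} gain $\lip(g|_\Gamma)\approx a(1+\sigma)$, with $\sigma$ forced to be small by the constraint $(1+\sigma)\lip(F|_U)\leq 1$; or (2) postcomposing with a map that slides images towards a point $y_0\in\stern(C_Y)$, which gives at best an \emph{additive} gain of order $\sigma\varepsilon$, again with $\sigma$ small for the same reason. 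Neither mechanism can take the ratio $\rho_Y(g(u_0),g(u_s))/\rho_X(u_0,u_s)$ from near $a$ up to near $1-\tfrac{1}{2n}$ when $a\ll 1-\tfrac{1}{2n}$. Your sentence about ``sliding the images of $u_0$ and $u_s$ in opposite directions along geodesics in $C_Y$'' is where the argument breaks: in a merely $\rho_Y$-star-shaped $C_Y$, the only geodesics one is entitled to move along run towards $\stern(C_Y)$, and doing this to both $g(u_0)$ and $g(u_s)$ typically brings them \emph{closer}, not farther apart; there is no second star-centre to slide towards, and the temperate-curvature inequality~\eqref{eq:defdelta} controls the Lipschitz constant of such a move but does not create the needed direction.

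This is precisely why the paper does not decompose only by the extension Lipschitz constant. It refines to the doubly-indexed sets
$\Q^{p}_{a,b}(U)=\{f:\,a<\sup_{\Gamma\in\G}\lip(f|_\Gamma)\leq b,\;\inf_{F\in\E(f)}\lip(F|_U)\leq 1-\tfrac1p\}$ with $b-a<\tfrac{a}{48(p-1)}$, so that the segment Lipschitz constant of $f$ is pinned in a narrow band $(a,b]$. Then the domain-side composition with $\beta$ produces a multiplicative stretch by $1+\sigma/4$ with $\sigma=16(b-a)/a$, which is exactly enough to push $\lip(h|_\Gamma)$ above $b$ and out of the band for any $h$ in a ball around $g$, while still keeping $g$ nonexpansive because $(1+3\sigma)(1-\tfrac1p)\leq1$. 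The mappings with $\sup_\Gamma\lip(f|_\Gamma)=0$ are handled separately in Lemma~\ref{lemma:ConstPorousStar} with a range-side perturbation, and there the escape criterion is merely that $h$ be non-constant on some segment in $\G$, not that $h$ have Lipschitz constant near one. Your proposal omits the segment decomposition and so sets an escape bar that the available perturbations cannot clear.
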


Let $U\subseteq X$ satisfy the hypotheses of Theorem~\ref{thm:all}. From this point onwards we only 
work inside metric segments in the space $X$ of the form $[x,y]$, where $x,y\in X$ with $\rho_{X}(x,y)<D_{X}$. 
Such metric segments are well defined because $X$ satisfies condition~\eqref{localuniqueness} of 
Definition~\ref{def:weaklyhyperbolic}. In particular, for $x,y\subseteq X$ with $\rho_{X}(x,y)<D_{X}$ and 
$\lambda\in[0,1]$, the point $(1-\lambda)x\oplus \lambda y\in X$ is well defined. We adopt a similar approach when 
working with metric segments in the space $Y$. In what follows we often identify a metric segment $[x,y]$ with a 
real interval. In particular, we endow metric segments with the natural ordering they inherit when viewed as real intervals. 

Let $\G$ denote the collection of all metric segments of the form $[w_{0},w_{1}]\subseteq C_{X}\cap U$ for which there 
exists a point $x_{0}\in \stern(C_{X})$ such that $w_{0}\in B(x_{0},D_{X})$ and $w_{1}\in[w_{0},x_{0}]$ with $w_{0}<w_{1}<x_{0}$. 
Since $U\subseteq B(\stern(C_{X}),D_{X})$ and $U\cap C_{X}\neq\emptyset$, the collection $\G$ is not empty. In the case where
$C_{X}$ is convex, we note that every metric segment in $C_{X}\cap U$ contains a metric subsegment which belongs to 
$\mathcal{G}$. For numbers $a<b\in(0,1)$ and $p\geq 2$, we define a collection of subsets $\Q_{a,b}^{p}(U)$ of $\Q(U)$ by
\begin{equation*}
  \Q_{a,b}^{p}(U):=\left\{f\in\Q(U)\colon a<\sup_{\Gamma\in\G}\lip(f|_{\Gamma})\leq b,\, \inf_{F\in\E(f)}\lip(F|_{U})\leq 1-\frac{1}{p}\right\}.
\end{equation*}
The significance of the above decomposition of $\Q(U)$ is revealed in the following lemma. 
\begin{lemma}\label{lemma:Qabpporous}
  If $a,b\in(0,1)$ and $p\geq 2$ satisfy the condition
  \begin{equation}\label{eq:ConditionABUBStar}
    b-a<\frac{a}{48(p-1)},
  \end{equation}
  then the set $\Q_{a,b}^{p}(U)$ is porous in $\mathcal{M}(C_{X},C_{Y})$.
\end{lemma}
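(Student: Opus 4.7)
The plan is to establish porosity at each $f \in \Q_{a,b}^p(U)$. The only way to force a mapping $h \in \M(C_X,C_Y)$ out of $\Q_{a,b}^p(U)$ without touching the extension-part of the definition is to arrange $\sup_{\Gamma \in \G}\lip(h|_\Gamma) > b$. Accordingly, for every sufficiently small $\varepsilon > 0$ I want to build $g \in B(f,\varepsilon)$ whose restriction to a specific segment in $\G$ attains Lipschitz constant exceeding $b$ by a margin that survives any $\alpha\varepsilon$-perturbation in $d_\theta$. To locate a good segment, I use $\sup_{\Gamma \in \G}\lip(f|_\Gamma) > a$ to pick $\Gamma_0 = [w_0,w_1] \in \G$ with $\lip(f|_{\Gamma_0})$ strictly greater than some $L \in (a,\sup_{\Gamma \in \G}\lip(f|_\Gamma))$; the definition of $\G$ supplies a star-center $x_0 \in \stern(C_X)$ with $w_0,w_1 \in [w_0,x_0]$. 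Identifying $[w_0,x_0]$ with a closed interval, Lemma~\ref{lem:LipOnGamma} produces $u_0 \in \Gamma_0$ with
\[
\liminf_{t \to 0^+}\frac{\rho_Y(f((1-t)u_0 \oplus t x_0),f(u_0))}{t\,\rho_X(u_0,x_0)} > L,
\]
so that for all sufficiently small $\tau > 0$ the point $u_\tau := (1-\tau)u_0 \oplus \tau x_0$ satisfies $\rho_Y(f(u_\tau),f(u_0)) > L\,\rho_X(u_0,u_\tau)$ and $[u_0,u_\tau] \in \G$, with the same star-center $x_0$.

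Next, I construct $g$ by perturbing $f$ only near $u_0$ so as to push $f(u_0)$ farther from $f(u_\tau)$. Using $C_Y \subseteq \ell_\infty(\Omega)$, I fix $\omega \in \Omega$ whose coordinate nearly witnesses $\rho_Y(f(u_\tau),f(u_0))$, and modify $f$ by a Lipschitz cut-off in that coordinate of amplitude of order $\varepsilon$, supported in a small $\rho_X$-ball around $u_0$. The cut-off must be arranged so that: (a) $g$ still maps $C_X$ into $C_Y$ and remains nonexpansive, for which the $\rho_Y$-star-shapedness of $C_Y$, the inclusion $C_Y \subseteq B(\stern(C_Y),D_Y)$, and the $1/p$-slack furnished by $\inf_{F \in \E(f)}\lip(F|_U) \leq 1 - 1/p$ are the essential ingredients; (b) $d_\theta(f,g) \leq \varepsilon$, which follows from the bound on the amplitude together with the location of the support; and (c) on $[u_0,u_\tau]$ the mapping $g$ achieves Lipschitz quotient at least $L + s/\tau$ for some $s$ of order $\varepsilon$. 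Choosing $\tau$ of order $\varepsilon/(b-L)$ then gives $\lip(g|_{[u_0,u_\tau]}) > b + \delta$ for a definite margin $\delta$.

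Finally, any $h$ with $d_\theta(g,h) \leq \alpha\varepsilon$ satisfies $\rho_Y(h(u_*),g(u_*)) \leq \alpha\varepsilon(1 + \rho_X(u_*,\theta))$ at $u_* \in \{u_0,u_\tau\}$, so the Lipschitz quotient of $h$ on $[u_0,u_\tau]$ differs from that of $g$ by at most a term of order $\alpha\varepsilon/\tau$. If $\alpha$ is chosen small enough that this error is dominated by $\delta$, then $\lip(h|_{[u_0,u_\tau]}) > b$, so $h \notin \Q_{a,b}^p(U)$, since $[u_0,u_\tau] \in \G$. The delicate competition between the amplitude of the bump, the $d_\theta$-constraint, and the $1/p$-slack needed to keep $g$ nonexpansive into $C_Y$ is what pins down the threshold $b - a < a/(48(p-1))$ in~\eqref{eq:ConditionABUBStar}. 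I expect the main technical obstacle to be precisely the construction in the middle paragraph: arranging for an additive perturbation in $\ell_\infty(\Omega)$ to have its image inside $C_Y$ while simultaneously preserving nonexpansiveness and delivering the required Lipschitz enhancement on $[u_0,u_\tau]$. This is the step where the $\rho_Y$-star-shapedness of $C_Y$, the diameter bound $D_Y$, and the quantitative slack $1/p$ must be combined into a single careful construction, and all the numerical constants in~\eqref{eq:ConditionABUBStar} are governed by the ensuing bookkeeping.
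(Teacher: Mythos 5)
Your overall strategy is the same as the paper's: find $\Gamma\in\G$ with $\lip(f|_{\Gamma})>a$, use Lemma~\ref{lem:LipOnGamma} to locate $u_{0}$ with a favourable Lipschitz quotient towards the star-centre $x_{0}$, perturb $f$ to a nearby $g$ achieving quotient $>b$ on a short subsegment of $\G$ with a margin that survives $\alpha\varepsilon$-changes, and conclude that a ball around $g$ misses $\Q_{a,b}^{p}(U)$. The final scaling heuristics (error of order $\alpha\varepsilon/\tau$ with $\tau$ of order $\varepsilon$) are also essentially right. The genuine gap is the perturbation mechanism, and the obstacle you flag at the end is not a matter of bookkeeping; it is fatal for the approach you describe. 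Adding a coordinate-wise bump to $f_{\omega}$ in the ambient $\ell_{\infty}(\Omega)$ generically pushes the image out of $C_{Y}$: the $\rho_{Y}$-star-shapedness of $C_{Y}$ and the bound $D_{Y}$ only license moving along metric segments towards a point of $\stern(C_{Y})$, and a coordinate direction of the embedding $C_{Y}\subseteq\ell_{\infty}(\Omega)$ is not such a segment. The $1/p$-slack cannot rescue this either; it is a bound on $\lip(F|_{U})$ for the ambient extension $F\colon\conv(C_{X})\to\ell_{\infty}(\Omega)$, which need not map into $C_{Y}$ at all, and says nothing about how far one may translate images while remaining in $C_{Y}$.

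The paper's construction sidesteps the obstacle entirely by perturbing the \emph{domain} rather than the range. It sets $\beta(x):=(1-\lambda(x))x\oplus\lambda(x)x_{0}$, where $x_{0}\in\stern(C_{X})$ is the star-centre already attached to $\Gamma$ and $\lambda$ is a Lipschitz bump with $\lambda(u_{0})=0$, and defines $g$ as the restriction to $C_{X}$ of $G:=F\circ\beta$. Since $\beta(C_{X})\subseteq C_{X}$ and $F|_{C_{X}}=f$ maps into $C_{Y}$, the inclusion $G(C_{X})\subseteq C_{Y}$ holds for free, with no hypothesis on $C_{Y}$ at all. Nonexpansiveness of $G$ near $u_{0}$ follows from $\lip(\beta)\leq1+3\sigma$ composed with $\lip(F|_{U})\leq1-\frac{1}{p}$, and the Lipschitz enhancement on $[u_{0},(1-s)u_{0}\oplus sx_{0}]$ comes from $\beta$ advancing the far endpoint further along $[u_{0},x_{0}]$ while fixing $u_{0}$. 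With $\sigma=16(b-a)/a$, condition~\eqref{eq:ConditionABUBStar} is precisely what guarantees $(1+3\sigma)(1-\frac{1}{p})\leq1$. In fact the hypotheses on $C_{Y}$ that you invoke (star-shapedness and $C_{Y}\subseteq B(\stern(C_{Y}),D_{Y})$) are not used in this lemma; they enter only in the companion Lemma~\ref{lemma:ConstPorousStar} for the residual set $\Q_{0}(U)$, where a genuine range-side perturbation is unavoidable --- and even there it is a convex combination towards $y_{0}\in\stern(C_{Y})$, not a coordinate bump.
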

Let us begin working towards a proof of Lemma~\ref{lemma:Qabpporous}. The basic idea of the proof is to take a mapping $f\in\Q_{a,b}^{p}(U)$ and to peturb it slightly to produce a nearby mapping $g\in\M(C_{X},C_{Y})$, the distance of which from the set $\Q_{a,b}^{p}(U)$ is a relatively large proportion of its distance from $f$. In order to control the Lipschitz constant of the mapping we construct, we first extend $f$ to a mapping $F:\conv(C_{X})\to \ell_{\infty}(\Omega)$ witnessing the fact that $f\in \Q_{a,b}^{p}(U)$ and then transform $F$ to a mapping $G:\conv(C_{X})\to\ell_{\infty}(\Omega)$ satisfying $G(C_{X})\subseteq C_{Y}$. The desired mapping $g\in\M(C_{X},C_{Y})$ can then be defined as the restriction of $G$ to $C_{X}$. 

The star-shaped nature of the sets $C_{X}$ and $C_{Y}$ presents two natural means of manipulating the mapping $F:\conv(C_{X})\to \ell_{\infty}(\Omega)$ in such a way that the condition $F(C_{X})\subseteq C_{Y}$ is preserved. One approach is to apply a mapping of the form $x\mapsto (1-\lambda(x))x\oplus \lambda(x)x_{0}$ with $x_{0}\in\stern(C_{X})$ to the set $\conv(C_{X})$ before applying the mapping $F$. Alternatively, one can first apply the mapping $F$ and then apply a mapping of the form $y\mapsto (1-\lambda(y))y\oplus\lambda(y) y_{0}$, with $y_{0}\in \stern(C_{Y})$. The latter approach is slightly more difficult than the former because the convex combination $(1-\lambda)F(x)\oplus \lambda y_{0}$
is not defined for all $x\in \conv(C_{X})$. In the present section we use both the aforementioned 
transformations and the next lemma captures their required properties. Given a real valued mapping $\lambda$ on $X$ we denote by $\|\lambda\|_{\infty} := \sup\{|\lambda(x)|\colon x\in X\}$ its supremum norm.

\begin{lemma}\label{lemma:pert}
  Let $Z\in\left\{X,Y\right\}$, $\sigma\in(0,1)$, $u_{0}\in C_{X}$, $z_{0}\in C_{Z}$ and $\pi\colon\conv(C_{X})\to \ell_{\infty}(\Omega)$ be a nonexpansive mapping such that 
  $\pi(C_{X})\subseteq C_{Z}$ and $0<\rho_{Z}(\pi(u_{0}),z_{0})<D_{Z}$. Then there is a number $r_{0}>0$ such that the following statement holds: Let $r,\varepsilon\in(0,r_{0})$, $\lambda\colon X\to[0,1]$ be a Lipschitz mapping such that $\lambda(x)=0$ for all  $x\in X\setminus B(u_{0},r)$,
  \begin{equation*}
    \left\|\lambda\right\|_{\infty}\leq \varepsilon/2\rho_{Z}(\pi(u_{0}),z_{0})\text{ and }\lip(\lambda)\leq \sigma/\rho_{Z}(\pi(u_{0}),z_{0}),
  \end{equation*}
  and suppose that $\pi(\conv(C_{X})\cap B(u_{0},r))\subseteq B(z_{0},D_{Z})$ and that every point $x\in C_{X}\cap B(u_{0},r)$ 
  admits a unique metric segment $[\pi(x),z_{0}]\subseteq C_{Z}$. Let $\beta$ be the mapping into $\ell_{\infty}(\Omega)$ defined 
  in the case $Z=X$ by
  \begin{equation*}
    \beta(x):=
      (1-\lambda(x))\pi(x)\oplus \lambda(x)z_{0} \qquad \forall x\in \conv(C_{X}),
  \end{equation*}
  and in the case $Z=Y$ by
  \begin{equation*}
  \beta(x):=\begin{cases}
  (1-\lambda(x))\pi(x)\oplus_{Y} \lambda(x)z_{0} & \text{if }x\in C_{X}\cap B(u_{0},r),
  \\
  \pi(x) & \text{if }x\in \conv(C_{X})\setminus B(u_{0},r).
  \end{cases}
  \end{equation*}
  Then $\beta$ satisfies the following conditions:
  \begin{enumerate}[(i)]
  \item\label{SxtoSy} $\beta(C_{X})\subseteq C_{Z}$;
  \item\label{beta-p} $\rho_{Z}(\beta(x),\pi(x))\leq\varepsilon$ for all $x\in C_{X}$;
  \item\label{lipbeta3} $\lip(\beta)\leq \max\left\{1,(1+\sigma)\lip(\pi|_{B(u_{0}, r)})+2\sigma\right\}$.
  \end{enumerate}
\end{lemma}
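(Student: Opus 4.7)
The plan is to exploit condition~\eqref{thinishtriangles} of the temperate curvature of $Z$, applied to the pair $(\pi(u_{0}), z_{0})$ with parameter $\sigma$, which produces a constant $\delta_{Z}=\delta_{Z}(\pi(u_{0}),z_{0},\sigma)$. I would set $r_{0}$ to be a sufficiently small positive number, requiring at least $r_{0}<\delta_{Z}$, so that whenever $r,\varepsilon\in(0,r_{0})$ one has $\pi(B(u_{0},r))\subseteq B(\pi(u_{0}),\delta_{Z})$ (by nonexpansiveness of $\pi$), $\rho_{Z}(\pi(x),z_{0})\leq 2\rho_{Z}(\pi(u_{0}),z_{0})$ for $x\in\conv(C_{X})\cap B(u_{0},r)$, and $\|\lambda\|_{\infty}\leq\delta_{Z}$. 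Together with the hypothesis that the segments $[\pi(x),z_{0}]$ exist uniquely and lie in $C_{Z}$ for $x\in C_{X}\cap B(u_{0},r)$, these choices make $\beta$ well defined.

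Conditions~\eqref{SxtoSy} and~\eqref{beta-p} are then immediate. For~\eqref{SxtoSy}, the point $\beta(x)$ either coincides with $\pi(x)\in C_{Z}$ or lies on the segment $[\pi(x),z_{0}]\subseteq C_{Z}$. For~\eqref{beta-p}, a direct computation gives
\[
\rho_{Z}(\beta(x),\pi(x))=\lambda(x)\rho_{Z}(\pi(x),z_{0})\leq\|\lambda\|_{\infty}\cdot 2\rho_{Z}(\pi(u_{0}),z_{0})\leq\varepsilon.
\]

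The main work lies in verifying~\eqref{lipbeta3}. I would split into cases according to whether the points $x,y$ lie inside or outside $B(u_{0},r)$. If both lie outside, then $\beta$ agrees with $\pi$ on both and nonexpansiveness of $\pi$ delivers the bound $1$. If both lie inside (and in $C_{X}$, in the case $Z=Y$), I would insert the auxiliary point $p:=(1-\lambda(x))\pi(y)\oplus\lambda(x)z_{0}$ and apply the triangle inequality. Condition~\eqref{thinishtriangles} bounds $\rho_{Z}(\beta(x),p)$ by $(1+\sigma)\rho_{Z}(\pi(x),\pi(y))\leq(1+\sigma)\lip(\pi|_{B(u_{0},r)})\rho_{X}(x,y)$, while $\rho_{Z}(p,\beta(y))$ is the distance between two points on the segment $[\pi(y),z_{0}]$ with parameters $\lambda(x)$ and $\lambda(y)$, which is at most $|\lambda(x)-\lambda(y)|\rho_{Z}(\pi(y),z_{0})\leq \lip(\lambda)\cdot 2\rho_{Z}(\pi(u_{0}),z_{0})\rho_{X}(x,y)\leq 2\sigma\rho_{X}(x,y)$.

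In the mixed case---$x$ inside and $y$ outside $B(u_{0},r)$---I would interpolate through a point $y_{0}\in\partial B(u_{0},r)$ lying on the metric segment from $x$ to $y$ in $\conv(C_{X})$, which exists by $\rho_{X}$-convexity of $\conv(C_{X})$ and continuity. At $y_{0}$ we have $\lambda(y_{0})=0$, hence $\beta(y_{0})=\pi(y_{0})$; applying (a limiting form of) the previous estimate to the subsegment $[x,y_{0}]$ and nonexpansiveness of $\pi$ to $[y_{0},y]$, then using $\rho_{X}(x,y_{0})+\rho_{X}(y_{0},y)=\rho_{X}(x,y)$, yields the bound $\max\{1,(1+\sigma)\lip(\pi|_{B(u_{0},r)})+2\sigma\}\rho_{X}(x,y)$. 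The main obstacle I anticipate is handling the boundary of $B(u_{0},r)$ cleanly when $Z=Y$, where $Y$ is only of temperate curvature and the existence of the segment $[\pi(y_{0}),z_{0}]$ required to apply condition~\eqref{thinishtriangles} at $y_{0}$ is not automatic; I expect this to be circumvented by a limiting argument along approximating points of $C_{X}\cap B(u_{0},r)$, using continuity of $\pi$ and the identity $\overline{B(u_{0},r)}=\overline{B}(u_{0},r)$ established earlier for weakly hyperbolic spaces.
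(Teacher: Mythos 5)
Your proposal follows essentially the same approach as the paper: the same intermediate point $p=(1-\lambda(x))\pi(y)\oplus\lambda(x)z_0$ in the triangle inequality, the same application of the temperate-curvature inequality to the first term and the Lipschitz bound on $\lambda$ to the second, and the same Intermediate Value Theorem device at $\partial B(u_0,r)$. The boundary difficulty you anticipate for $Z=Y$ is resolved in the paper without any limiting argument: since $\lambda$ vanishes on $\partial B(u_0,r)$, one interprets $(1-\lambda(x'))z\oplus\lambda(x')z_0$ simply as $z$ there, and by placing the boundary point in the slot whose $\lambda$-value supplies the convex parameter in the intermediate point, the segment $[\pi(x'),z_0]$ is never needed (one also uses $\lip(\pi|_{B(u_0,r)})=\lip(\pi|_{\overline{B}(u_0,r)})$, which holds since $\overline{B(u_0,r)}=\overline{B}(u_0,r)$ in a weakly hyperbolic space).
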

\begin{proof}
  We define 
  \begin{equation*}
  r_{0}=\min\left\{\rho_{Z}(\pi(u_{0}),z_{0}),\delta_{Z}(\pi(u_{0}),z_{0},\sigma),\rho_{Z}(\pi(u_{0}),z_{0})\delta_{Z}(\pi(u_{0}),z_{0},\sigma)\right\}.
  \end{equation*}
  Let $r,\varepsilon\in(0,r_{0})$ and $\lambda\colon X\to[0,1]$ be given by the hypotheses of Lemma~\ref{lemma:pert}. We now 
  verify statements~\eqref{SxtoSy}-\eqref{lipbeta3}.
  
  Statement~\eqref{SxtoSy} is immediate from the definition of $\beta$, the condition that $[\pi(x),z_{0}]
  \subseteq C_{Z}$ for all $x\in C_{X}\cap B(u_{0},r)$ and the fact that $\pi(C_{X})\subseteq C_{Z}$. For statement~\eqref{beta-p} 
  we make the following observation: If $x\in \conv(C_{X})\setminus B(u_{0},r)$, then $\beta(x)=\pi(x)$. 
  Otherwise, we have
  \begin{align*}
    \rho_{Z}(\beta(x),\pi(x))&\leq \lambda(x)\rho_{Z}(\pi(x),z_{0})\leq\left\|\lambda\right\|_{\infty}
                               (\rho_{Z}(\pi(u_{0}),z_{0})+r)\leq \varepsilon,
  \end{align*}
  using $r<\rho_{Z}(\pi(u_{0}),z_{0})$ and $\left\|\lambda\right\|_{\infty}\leq\varepsilon/2\rho_{Z}(\pi(u_{0}),z_{0})$.  
  
  To prove~\eqref{lipbeta3}, we fix points $x,y$ in the intersection of the domain of $\beta$ with $B(u_{0},r)$ and observe that
  \begin{align}
    \rho_{Z}(\beta(x),\beta(y))&\leq\rho_{Z}((1-\lambda(x))\pi(x)\oplus\lambda(x)z_{0},(1-\lambda(x))\pi(y)\oplus\lambda(x)z_{0})\nonumber\\
                               &\qquad\qquad\qquad+\rho_{Z}((1-\lambda(x))\pi(y)\oplus\lambda(x) z_{0},
                                 (1-\lambda(y))\pi(y)\oplus\lambda(y)z_{0})\nonumber\\
                               &\leq(1+\sigma)\rho_{Z}(\pi(x),\pi(y))+\left|\lambda(y)-\lambda(x)\right|\rho_{Z}(\pi(y),z_{0})\nonumber\\
                               &\leq (1+\sigma)\lip(\pi|_{B(u_{0},r)})\rho_{X}(x,y)\nonumber\\
                               &\qquad\qquad\qquad\qquad+\lip(\lambda)(r+\rho_{Z}(\pi(u_{0}),z_{0}))\rho_{X}(x,y)\nonumber\\
                               &\leq ((1+\sigma)\lip(\pi|_{B(u_{0},r)})+2\sigma)\rho_{X}(x,y). \label{eq:finallipineq}
  \end{align}
  In deriving the above inequalities we used the definition of $r_{0}$ and the constraints on $r,\varepsilon$ and $\lambda$ to deduce that $0\leq\lambda(x)< \delta_{Z}(\pi(u_{0}),z_{0},\sigma)$ and $\rho_{Z}(\pi(x),\pi(u_{0})),\rho_{Z}(\pi(y),\pi(u_{0}))< \delta_{Z}(\pi(u_{0}),z_{0},\sigma)$. These conditions allow us to apply condition~\eqref{thinishtriangles} of Definition~\ref{def:weaklyhyperbolic} to obtain the second inequality in the sequence above. Note that the above inequalities remain true for $x\in\partial B(u_0,r)$ when, for $z\in \ell_{\infty}(\Omega)$, we interpret the expression $(1-\lambda(x))z\oplus \lambda(x)z_{0}$ as $z$ since in that case $\lambda(x)=0$ and $\lip(\pi|_{B(u_0,r)})=\lip(\pi|_{\overline{B}(u_0,r)})$.
  
  Having established~\eqref{eq:finallipineq} and noting that $\beta$ coincides with the nonexpansive mapping $\pi$ outside of $B(u_{0},r)$, we only need to verify the Lipschitz bound for the quantity $\rho_{Z}(x,y)$ for points $x,y$ in the domain of $\beta$ with $x\in B(u_{0},r)$ and $y\notin B(u_{0},r)$. Such points admit a metric segment $[x,y]$ in $\conv(C_{X})$ and an application of the Intermediate Value Theorem provides a point $x'\in[x,y]$ with $\rho_{X}(x',u_{0})=r$, so that $x'\in\partial B(u_{0},r)$. Using the Lipschitz bound derived above for points $u,v$ in the domain of $\beta$ with $u\in\partial B(u_{0},r)$ and $v\in B(u_{0},r)$, we may now deduce that
  \begin{align*}
    \rho_{Z}(\beta(x),\beta(y)) &\leq\rho_{Z}(\beta(x),\beta(x'))+\rho_{Z}(\beta(x'),\beta(y))\\
                               &\leq ((1+\sigma)\lip(\pi|_{B(u_{0},r)})+2\sigma) \rho_{X}(x,x')+\rho_{X}(x',y) \\
                               &\leq \max\{1,((1+\sigma)\lip(\pi|_{B(u_{0},r)})+2\sigma)\} (\rho_{X}(x,x')+\rho_X(x',y))\\
                               &= \max\{1,((1+\sigma)\lip(\pi|_{B(u_{0},r)})+2\sigma)\}(\rho_{X}(x,y).
  \end{align*}
  This completes the proof of \eqref{lipbeta3} and of Lemma~\ref{lemma:pert} itself.  
\end{proof}

Fix a mapping $f\in \Q_{a,b}^{p}(U)$ and choose a metric segment $\Gamma=[w_{0},w_{1}]\in\G$ such that 
$a<\lip(f|_{\Gamma})\leq b$ and an extension $F:\conv(C_{X})\to \ell_{\infty}(\Omega)$ of $f$ such that 
$\lip(F|_{U})\leq 1-\frac{1}{p}$. Choose $x_{0}\in \stern(C_{X})$ such that $w_{0}\in B(x_{0},D_{X})$ and 
$[w_{0},w_{1}]\subseteq [w_{0},x_{0}]$ with $w_{0}<w_{1}<x_{0}$. The mapping $F$ coincides with $f$ on the
segment $\Gamma$. Therefore we have $a<\lip(F|_{\Gamma})\leq b$. Applying Lemma~\ref{lem:LipOnGamma} 
with $C=(w_{0},w_{1})\subseteq [w_{0},x_{0}]$, we find a point $u_{0}\in (w_{0},w_{1})$ such that
\begin{equation*}
  \liminf_{t\to 0^{+}}\frac{\rho_{Y}(F((1-t)u_0\oplus tx_0), F(u_0))}{t\rho_{X}(u_0,x_0)}>a.
\end{equation*}

Choose $\sigma\in(0,1)$ such that $(1-\frac{1}{p})(1+3\sigma)\leq 1$. Let $r_{0}$ be given by the conclusion of Lemma~\ref{lemma:pert} applied to $Z=X$, $\sigma$, $u_{0}$, $z_{0}=x_{0}$ and $\pi=\operatorname{id}_{\conv(C_{X})}\colon \conv(C_{X})\to\ell_{\infty}(\Omega)$. Let $r\in(0,r_{0})$ be small enough so that $B(u_{0},3r)\subseteq U\cap B(x_{0},D_{X})$. Using $u_{0}<w_{1}<x_{0}$, we may choose $\varepsilon_{0}\in(0,\min\left\{\sigma r/2,\rho_{X}(u_{0},x_{0})/2,1\right\})$ small enough so that 
\begin{equation}\label{eq:liminfu0}
  (1-t)u_{0}\oplus tx_{0}\in [w_{0},w_{1}]=\Gamma\quad\text{and}\quad\frac{\rho_{Y}(F((1-t)u_0\oplus t x_0),F(u_0))}{t\rho_{X}(u_0,x_0)} > a
\end{equation}
for all $t\in (0,2\varepsilon_{0}/\rho_{X}(u_{0},x_{0}))$. Fixing $\varepsilon\in(0,\varepsilon_0)$, we introduce the mappings
\[
\psi\colon X\to [0,1], \quad x\mapsto 
\begin{cases} 
  1-\frac{2}{r}\dist\left(x, B\left(u_0,\frac{r}{2}\right)\right) & x\in B(u_0,r)\\
  0 & x\not\in B(u_0,r)
\end{cases}
\]
and
\[
\varphi\colon \mathbb{R}\to\mathbb{R}, \quad t\mapsto \min\left\{|t|,\frac{\varepsilon}{\sigma}\right\}.
\]
These mappings satisfy 
\[
\lip\psi = \frac{2}{r}, \quad \|\psi\|_\infty=1, \quad \lip\varphi =1\quad\text{and}\quad
\|\varphi\|_\infty = \frac{\varepsilon}{\sigma}.
\]

Since the metric segment $[u_0,x_0]$ is isometric to a closed real interval, it is an absolute $1$-Lipschitz 
retract by Proposition~1.4 in~\cite[p.~13]{BL2000GeomtericNonlinear}. Let 
$R\colon X\to [u_0,x_0]$ be a $1$-Lipschitz retraction and $c\colon [0,\rho_{X}(u_{0},x_{0})]\to [u_0,x_0]$ 
be a metric embedding with $c(0)=u_0$. We define
\[
q \colon C_{X} \to [0,\rho_{X}(u_{0},x_{0})], \quad x \mapsto c^{-1}(R(x)). 
\]
Since $q$ is the composition of $1$-Lipschitz mappings, it is also a $1$-Lipschitz mapping.
Finally, we also define the mapping
\[
\lambda\colon X\to\mathbb[0,1], \quad x\mapsto \frac{\sigma}{2\rho_{X}(u_{0},x_{0})}\psi(x)\varphi(q(x)).
\]
This mapping satisfies $\lambda(x)=0$ whenever $x\in X\setminus B(u_{0},r)$, $\left\|\lambda\right\|_{\infty}\leq 
\varepsilon/2\rho_{X}(u_{0},x_{0})$ and
\begin{align*}
  \lip(\lambda) &\leq \frac{\sigma}{2\rho_{X}(u_{0},x_{0})}\left(\lip(\varphi)\|\psi\|_\infty+\lip(\psi)\|\varphi\|_\infty\right)
                  = \frac{1}{2\rho_{X}(u_{0},x_{0})} \left(\sigma +\frac{2}{r} \varepsilon\right)\\
                &\leq \frac{1}{2\rho_{X}(u_{0},x_{0})}\left(\sigma + \sigma\right)
                  \leq \frac{1}{2\rho_{X}(u_{0},x_{0})}2\sigma = \frac{\sigma}{\rho_{X}(u_{0},x_{0})}
\end{align*}
because $\varepsilon < \sigma \frac{r}{2}$. We observe now that the conditions of Lemma~\ref{lemma:pert} are satisfied
for $Z=X$, $\sigma$, $u_{0}$, $z_{0}=x_{0}$ $\pi=\operatorname{id}_{\conv(C_{X})}$ $r,\varepsilon\in(0,r_{0})$ and $\lambda$. Finally also note that
\begin{equation}\label{eq:lambau0zero}
  \lambda(u_0)=0
\end{equation}
since $u_0\in[u_0,x_0]$ implies $R(u_0)=u_0$, $q(u_0)=0$ and hence $\varphi(u_0)=0$ .
Applying Lemma~\ref{lemma:pert}, we conclude that the mapping $\beta\colon \conv(C_{X})\to\conv(C_{X})$, defined by
\[
\beta(x):=(1-\lambda(x))x\oplus\lambda(x) x_0, 
\]
satisfies $\beta(C_{X})\subseteq C_{X}$, $\rho_{X}(\beta(x),x)\leq\varepsilon$ for all $x\in C_{X}$ and $\lip(\beta)\leq 1+3\sigma$.
\begin{lemma}\label{lem:PropertiesGStar}
  The mapping 
  \[
  G\colon \conv(C_{X}) \to \ell_{\infty}(\Omega), \quad x\mapsto F(\beta(x))
  \]
  satisfies the following conditions:
  \begin{enumerate}[(i)]
  \item\label{G:StoY} $G(C_{X})\subseteq C_{Y}$;
  \item\label{eq:DistFandGStar} $\rho_{Y}(F(x),G(x))\leq\varepsilon$ for all  $x\in C_{X}$;
  \item\label{eq:LipGStar} $\lip(G)\leq 1$;
  \item\label{Gsteep} For $s=\varepsilon/\rho_{X}(u_{0},x_{0})$, we have $(1-s)u_{0}\oplus sx_{0}\in \Gamma$ and
    \begin{equation*}
      \frac{\rho_{Y}(G((1-s)u_0\oplus sx_0),G(u_0))}{s\rho_{X}(u_0,x_0)} > a\left(1+\frac{\sigma}{4}\right).
    \end{equation*}
  \end{enumerate}
\end{lemma}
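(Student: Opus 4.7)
My plan is to verify (i)--(iv) sequentially: (i) and (ii) follow directly from the corresponding clauses of Lemma~\ref{lemma:pert}, (iii) needs a case analysis exploiting the strict contractivity of $F|_U$, and (iv) is an explicit parameter computation along $[u_0,x_0]$. For~(i): $\beta(C_X)\subseteq C_X$ by Lemma~\ref{lemma:pert}~(i), and $F|_{C_X}=f$ maps into $C_Y$, so $G(C_X)\subseteq C_Y$. For~(ii): since $F$ is $1$-Lipschitz, $\rho_Y(F(x),G(x))=\rho_Y(F(x),F(\beta(x)))\leq\rho_X(x,\beta(x))\leq\varepsilon$ by Lemma~\ref{lemma:pert}~(ii).

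For (iii), the naive bound $\lip(G)\leq\lip(F)\lip(\beta)\leq 1+3\sigma$ is not sufficient. The idea is that $\beta$ coincides with the identity outside $B(u_0,r)$, while on the region where it stretches distances by the factor $1+3\sigma$, the mapping $F$ is strictly contractive. Concretely, for $x,y\in\conv(C_X)\cap B(u_0,r)$ one has $\beta(x),\beta(y)\in B(u_0,2r)\subseteq U$ (using $\varepsilon<\sigma r/2<r$), so
\begin{equation*}
  \rho_Y(G(x),G(y))\leq\lip(F|_U)\lip(\beta)\rho_X(x,y)\leq(1-\tfrac{1}{p})(1+3\sigma)\rho_X(x,y)\leq\rho_X(x,y)
\end{equation*}
by the choice of $\sigma$. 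If both $x,y\notin B(u_0,r)$, then $\lambda$ vanishes at both points, so $G$ agrees with $F$ there. For the mixed case, I would invoke the $\rho_X$-convexity of $\conv(C_X)$ and the intermediate value theorem along a metric segment $[x,y]$ to insert a point $x'\in\partial B(u_0,r)$, at which $\lambda(x')=0$ and hence $\beta(x')=x'$; the preceding two cases then combine via the triangle inequality with $\rho_X(x,x')+\rho_X(x',y)=\rho_X(x,y)$.

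For (iv), set $v:=(1-s)u_0\oplus sx_0$. Since $s=\varepsilon/\rho_X(u_0,x_0)<2\varepsilon_0/\rho_X(u_0,x_0)$, the choice of $\varepsilon_0$ in~\eqref{eq:liminfu0} ensures $v\in\Gamma$, and $G(u_0)=F(u_0)$ because $\lambda(u_0)=0$ by~\eqref{eq:lambau0zero}. The key is to evaluate $\lambda(v)$: since $\rho_X(v,u_0)=\varepsilon<r/2$ we have $\psi(v)=1$; since $v\in[u_0,x_0]$ and $R$ is a retraction onto $[u_0,x_0]$, we get $q(v)=\varepsilon$ and thus $\varphi(q(v))=\varepsilon$ (using $\sigma<1$), yielding $\lambda(v)=\sigma\varepsilon/(2\rho_X(u_0,x_0))$. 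Working inside $[u_0,x_0]$, the point $\beta(v)=(1-\lambda(v))v\oplus\lambda(v)x_0$ coincides with $(1-s')u_0\oplus s'x_0$ for $s'=s+\lambda(v)(1-s)$, which also satisfies $s'<2\varepsilon_0/\rho_X(u_0,x_0)$. Invoking~\eqref{eq:liminfu0} at parameter $s'$ then gives
\begin{equation*}
  \frac{\rho_Y(G(v),G(u_0))}{s\rho_X(u_0,x_0)}>a\cdot\frac{s'}{s}=a\left(1+\frac{\lambda(v)(1-s)}{s}\right)>a\left(1+\frac{\sigma}{4}\right),
\end{equation*}
where the final step uses $\lambda(v)/s=\sigma/2$ and $1-s>1/2$ (since $\varepsilon<\rho_X(u_0,x_0)/2$). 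The main obstacle throughout is (iii), where reconciling the $(1+3\sigma)$-expansion of $\beta$ with the strict contractivity of $F|_U$ forces the choice of $\sigma$ and requires careful handling of points straddling $\partial B(u_0,r)$; once the parameters in (iv) are worked out, the final inequality drops out routinely.
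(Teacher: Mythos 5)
Your proof is correct and follows essentially the same route as the paper: (i) and (ii) by direct appeal to Lemma~\ref{lemma:pert}, (iii) by showing $\beta(B(u_0,r))\subseteq U$ and combining $\lip(\beta)\leq 1+3\sigma$ with $\lip(F|_U)\leq 1-\tfrac{1}{p}$ (the paper writes $\overline{B}(u_0,r+\varepsilon)\subseteq B(u_0,3r)\subseteq U$, but your $B(u_0,2r)$ variant is equally valid), then handling the mixed case by inserting a boundary point, and (iv) by computing $\lambda((1-s)u_0\oplus sx_0)=\sigma s/2$ and reparametrising $\beta((1-s)u_0\oplus sx_0)$ as a point $(1-s')u_0\oplus s'x_0$ with $s'<2\varepsilon_0/\rho_X(u_0,x_0)$ so that~\eqref{eq:liminfu0} applies. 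Your $s'=s+\lambda(v)(1-s)$ is precisely the paper's $\alpha(\sigma s/2)$, and the final estimate $s'/s>1+\sigma/4$ matches the paper's chain of inequalities.
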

\begin{proof} 
 The inclusion $\beta(C_{X})\subseteq C_{X}$ together with the fact that $F$ is an extension of the mapping $f:C_{X}\to C_{Y}$ implies condition~\eqref{G:StoY}. Condition~\eqref{eq:DistFandGStar} follows immediately from the fact that $\rho_{X}(\beta(x),x)\leq\varepsilon$ for all $x\in C_{X}$. Let us now verify condition~\eqref{eq:LipGStar}: Since $G$ coincides with $F$ outside of $B(u_{0},r)$ and is defined on a $\rho_{X}$-convex set, an argument similar to the one at the end of the proof of Lemma~\ref{lemma:pert} shows that it suffices to prove $\lip(G|_{B(u_{0},r)})\leq 1$.
 If we show $\beta(B(u_0,r))\subseteq U$, this inequality follows from  $\lip(\beta)\leq 1+3\sigma$, $\lip(F|_{U})\leq (1-\frac{1}{p})$ and $(1+3\sigma)(1-\frac{1}{p})\leq 1$. In order to show the required inclusion, we use $\rho_{X}(\beta(x),x)\leq \varepsilon$ and $\varepsilon<r$ to get that $\beta(B(u_{0},r))\subseteq \overline{B}(u_{0},r+\varepsilon)\subseteq B(u_{0},3r)\subseteq U$. 

 Next we turn our attention to~\eqref{Gsteep}. The choice of $\varepsilon_{0}$ and 
 $s=\varepsilon/\rho_{X}(u_{0},x_{0})<2\varepsilon_{0}/\rho_{X}(u_{0},x_{0})$ imply that $(1-s)u_{0}\oplus sx_{0}\in\Gamma$. 
 For $t\in(0,1)$, we define 
 \begin{equation*}
   \gamma(t):=(1-t)[(1-s)u_{0}\oplus sx_{0}]\oplus tx_{0}.
 \end{equation*}
 Using condition~\eqref{closedwrtsubseg} of Definition~\ref{def:weaklyhyperbolic} in the weakly hyperbolic space $X$, we note that $\gamma(t)$ lies on the metric segment $[u_{0},x_{0}]$ in between $(1-s)u_{0}\oplus sx_{0}$ and $x_{0}$. Therefore we can compute $\rho_{X}(\gamma(t),u_{0})$ as the sum
 \begin{align*}
   \rho_{X}(\gamma(t),u_{0})&=\rho_{X}(\gamma(t),(1-s)u_{0}\oplus sx_{0})+\rho_{X}((1-s)u_{0}\oplus sx_{0},u_{0}))\nonumber\\
                            &=t(1-s)\rho_{X}(u_{0},x_{0})+s\rho_{X}(u_{0},x_{0})\nonumber\\
                            &=
(t+s(1-t))\rho_X(u_{0},x_{0}).
 \end{align*}
 It follows that 
 \begin{equation}\label{eq:distgammat}
   \gamma(t)=(1-\alpha(t))u_{0}\oplus \alpha (t)x_{0},\qquad\text{ where }\alpha(t):=t+s(1-t).
 \end{equation}
 Using the definitions of the mappings $\varphi$, $q$ and $\psi$ together with 
 \[
   \rho_{X}(u_0,(1-s)u_0\oplus sx_0)=s\rho_{X}(u_0,x_0)= \varepsilon<\varepsilon/\sigma<r/2
 \]
 we obtain $\varphi(q((1-s)u_{0}\oplus sx_{0}))=\varepsilon$, $\psi((1-s)u_{0}\oplus sx_{0})=1$ and subsequently,
 \begin{equation*}
   \lambda((1-s)u_{0}\oplus sx_{0})=\sigma\varepsilon/2\rho_{X}(u_{0},x_{0})=\sigma s/2.
 \end{equation*}
 We conclude that $\beta((1-s)u_{0}\oplus sx_{0})=\gamma(\frac{\sigma s}{2})$. From~\eqref{eq:distgammat} we see that $\alpha(\frac{\sigma s}{2})<2s<2\varepsilon_{0}/\rho_{X}(u_{0},x_{0})$. Therefore we can apply~\eqref{eq:liminfu0} to deduce
 \begin{align*}
   \frac{\rho_{Y}(G((1-s)u_{0}\oplus sx_{0}),G(u_{0}))}{s\rho_{X}(u_{0},x_{0})}
   &=\frac{\rho_{Y}\big(F((1-\alpha(\frac{\sigma s}{2}))u_{0}\oplus\alpha(\frac{\sigma s}{2})x_{0}),F(u_{0})\big)}
     {\alpha(\frac{\sigma s}{2})\rho_{X}(u_{0},x_{0})}\frac{\alpha(\frac{\sigma s}{2})}{s}\\
   &>a\left(\frac{\sigma}{2}+1-\frac{\sigma s}{2}\right)>a\left(1+\frac{\sigma}{4}\right).
 \end{align*} 
 Above we used~\eqref{eq:lambau0zero} to get $G(u_0)=F(\beta(u_0))=F(u_0)$ in the first line and the condition $s<\varepsilon_{0}/\rho_{X}(u_{0},x_{0})<1/2$ to get the final inequality.
\end{proof}
We are now ready to prove Lemma~\ref{lemma:Qabpporous}. 
\begin{proof}[Proof of Lemma~\ref{lemma:Qabpporous}]
  Fix $f\in\Q_{a,b}^{p}(U)$ and let $\Gamma\in\G$, $F:\conv(C_{X})\to\ell_{\infty}(\Omega)$,  $u_{0}\in\Gamma$, $\sigma\in(0,1)$ satisfying $(1+3\sigma)(1-\frac{1}{p})\leq 1$ and $\varepsilon_{0}>0$ be defined according 
  to the above construction. The precise value of $\sigma$ will be determined at the end of this proof. Given $\varepsilon\in(0,\varepsilon_{0})$, let the mapping $G\colon \conv(C_{X})\to \ell_{\infty}(\Omega)$ be given by the statement of Lemma~\ref{lem:PropertiesGStar}. Define $g\colon C_{X}\to C_{Y}$ to be the restriction of $G$ to the set $C_{X}$. From Lemma~\ref{lem:PropertiesGStar} it is clear that $g\in \M(C_{X},C_{Y})$ with $d_{\theta}(g,f)\leq\varepsilon$. We complete the proof by showing that
  \begin{equation*}
    B_{\theta}\left(g,\frac{a\sigma}{32(1+\rho_{X}(u_{0},\theta))}\varepsilon\right)\cap \Q_{a,b}^{p}(U)=\emptyset.
  \end{equation*}
  Let $h\in B_{\theta}\left(g,\frac{a\sigma}{32(1+\rho_{X}(u_{0},\theta))}\varepsilon\right)$. Then
  \[
  \rho_{Y}(g(x),h(x)) \leq \frac{1+\rho_{X}(x,\theta)}{1+\rho_{X}(u_0,\theta)} \frac{a\sigma}{32}\varepsilon
  \leq \frac{a\sigma}{16}\varepsilon
  \]
  for $x\in C_{X}\cap B(u_0,1)$ and, in particular,
  \[
  \rho_{Y}(g((1-s)u_0\oplus s x_0),h((1-s)u_0\oplus s x_0)) \leq \frac{a\sigma}{16}\varepsilon
  \]
  for $s=\varepsilon/\rho_{X}(u_{0},x_{0})$ because $\varepsilon<\varepsilon_{0}<1$.
  Therefore, using Lemma~\ref{lem:PropertiesGStar}, part \eqref{Gsteep} and the fact that $g$ coincides with $G$ on the segment $[u_{0},(1-s)u_{0}\oplus sx_{0}]\subseteq\Gamma\subseteq C_{X}$, we deduce that
  \begin{align*}
    \frac{\rho_{Y}(h((1-s)u_0\oplus s x_0),h(u_0))}{s\rho_{X}(u_0,x_0)}
    &\geq \frac{\rho_{Y}(g((1-s)u_0\oplus s x_0),g(u_0))}{s\rho_{X}(u_0,x_0)} - 2 \frac{a\sigma}{16}\frac{\varepsilon}{s\rho_{X}(u_0,x_0)}\\
    & > a \left(1+\frac{\sigma}{4}\right)-\frac{a\sigma}{8}= a\left(1+\frac{\sigma}{8}\right).
  \end{align*}
  We conclude from the above inequalities that $\lip(h|_{\Gamma})>b$, when we choose $\sigma=\frac{16(b-a)}{a}$. Condition~\eqref{eq:ConditionABUBStar} ensures that such a choice of $\sigma$ satisfies $(1+3\sigma)(1-\frac{1}{p})\leq 1$, as required. This establishes $h\notin \Q_{a,b}^{p}(U)$ and completes the proof.
\end{proof}
The sets $\Q_{a,b}^{p}(U)$ do not quite cover the whole of the set $\Q(U)$. In the next lemma, we verify that the elusive mappings in $\Q(U)$ form a porous subset of $\M(C_{X},C_{Y})$.
\begin{lemma}\label{lemma:ConstPorousStar}
  The set
  \[
  \Q_{0}(U) := \left\{f\in\Q(U)\colon \sup_{\Gamma\in \G}\lip(f|_{\Gamma})=0,\, \inf_{F\in\E(f)}\lip(F|_{U})<1\right\}
  \]
  is porous in $\mathcal{M}(C_{X},C_{Y})$.
\end{lemma}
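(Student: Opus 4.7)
The plan is to adapt the perturbation strategy from the proof of Lemma~\ref{lemma:Qabpporous}, but to apply Lemma~\ref{lemma:pert} with $Z = Y$ rather than $Z = X$. In the $Z = X$ construction the new mapping $g$ inherits $F$-values along the same star ray emanating from $x_0$, so if $f$ is constant on $\Gamma$ we have $g \equiv f(u_0)$ on the whole probe and no variation is created. Applying Lemma~\ref{lemma:pert} with $Z = Y$ instead moves the image $f(u_0)$ a controlled amount towards a point $z_0 \in C_Y$, producing variation of $g$ along $\Gamma$ whose Lipschitz content is independent of $\varepsilon$. The hypothesis $\inf_{F \in \E(f)} \lip(F|_U) < 1$ is what lets one keep $\lip(\beta) \leq 1$ in the bound of Lemma~\ref{lemma:pert}(iii).

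Given $f \in \Q_0(U)$, I would fix an extension $F \in \E(f)$ with $L := \lip(F|_U) < 1$, a segment $\Gamma = [w_0, w_1] \in \G$ (non-empty since $U \cap C_X \neq \emptyset$ and $U \subseteq B(\stern(C_X), D_X)$), the associated star point $x_0 \in \stern(C_X)$ with $w_0 < w_1 < x_0$, and a point $u_0 \in (w_0, w_1)$. Using $C_Y \subseteq B(\stern(C_Y), D_Y)$ together with the non-singleton assumption on $C_Y$, I would pick $z_0 \in \stern(C_Y)$ with $0 < \rho_Y(f(u_0), z_0) < D_Y$. Next I would choose $\sigma > 0$ with $(1+\sigma) L + 2\sigma \leq 1$ and $r > 0$ small enough that $B(u_0, r) \subseteq U$ and $F(\conv(C_X) \cap B(u_0, r)) \subseteq B(z_0, D_Y)$. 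The mapping $\lambda \colon X \to [0,1]$ is then built exactly as in the paragraph preceding Lemma~\ref{lem:PropertiesGStar}, but with $\rho_X(u_0, x_0)$ replaced by $\rho_Y(f(u_0), z_0)$ in the normalising constant; this yields $\lambda$ supported in $B(u_0, r)$ with $\lambda(u_0) = 0$, $\|\lambda\|_\infty \leq \varepsilon/(2\rho_Y(f(u_0), z_0))$ and $\lip(\lambda) \leq \sigma/\rho_Y(f(u_0), z_0)$. Applying Lemma~\ref{lemma:pert} with $Z = Y$, $u_0$, $z_0$ and $\pi = F$ produces $\beta \colon \conv(C_X) \to \ell_{\infty}(\Omega)$ satisfying $\beta(C_X) \subseteq C_Y$, $\rho_Y(\beta(x), F(x)) \leq \varepsilon$ for all $x \in C_X$, and $\lip(\beta) \leq 1$; setting $g := \beta|_{C_X}$ gives $g \in \M(C_X, C_Y)$ with $d_\theta(g, f) \leq \varepsilon$.

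The decisive point is that $f \equiv f(u_0)$ on $\Gamma$, so $g(x) = (1 - \lambda(x)) f(u_0) \oplus_Y \lambda(x) z_0$ for every $x \in \Gamma$. Repeating the evaluation of Lemma~\ref{lem:PropertiesGStar}(iv) at $v_s := (1-s) u_0 \oplus_X s\,x_0 \in \Gamma$ with $s := \varepsilon/\rho_X(u_0, x_0)$ yields $\lambda(v_s) = \sigma \varepsilon/(2 \rho_Y(f(u_0), z_0))$, whence
\[
\rho_Y(g(u_0), g(v_s)) \;=\; \lambda(v_s)\,\rho_Y(f(u_0), z_0) \;=\; \tfrac{\sigma}{2}\,\varepsilon, \qquad \rho_X(u_0, v_s) \;=\; \varepsilon.
\]
Fixing $\alpha := \sigma/(16(1 + \rho_X(u_0, \theta)))$, the definition of $d_\theta$ forces $\rho_Y(g(x), h(x)) \leq (2 + \rho_X(u_0, \theta))\alpha\varepsilon$ at both $x = u_0$ and $x = v_s$ for every $h \in B_\theta(g, \alpha\varepsilon)$; the triangle inequality then gives $\rho_Y(h(u_0), h(v_s)) \geq \sigma\varepsilon/4 > 0$, so $\lip(h|_\Gamma) > 0$ and $h \notin \Q_0(U)$. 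This proves porosity of $\Q_0(U)$ at $f$ with constants $\varepsilon_0$ and $\alpha$ depending on $f$.

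The step I expect to be most delicate is the choice of $z_0$. In the generic situation $z_0 \in \stern(C_Y) \setminus \{f(u_0)\}$ makes the unique-metric-segment hypothesis $[F(x), z_0] \subseteq C_Y$ of Lemma~\ref{lemma:pert} immediate via star-shapedness and condition~\eqref{localuniqueness} of Definition~\ref{def:weaklyhyperbolic} in $Y$. In the degenerate configuration $\stern(C_Y) \cap B(f(u_0), D_Y) = \{f(u_0)\}$, however, one must instead pick $z_0$ on a short subsegment of some $[f(u_0), y] \subseteq C_Y$ with $y \in C_Y \setminus \{f(u_0)\}$ and verify the inclusion $[F(x), z_0] \subseteq C_Y$ for $x \in C_X \cap B(u_0, r)$ by a local argument using only the temperate-curvature assumption on $Y$.
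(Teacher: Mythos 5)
Your overall strategy matches the paper's own proof: apply Lemma~\ref{lemma:pert} with $Z=Y$, move the image value a small amount towards a point $z_0\in C_Y$, and exploit constancy of $f$ on the probe segment so that a separation proportional to $\sigma\varepsilon$ is created between two points of $\Gamma$. The lower-bound computation on $\rho_{Y}(h(u_{0}),h(v_{s}))$ and the resulting porosity constant are fine, and your more elaborate $\lambda$ (imported from the $Z=X$ construction with $\psi$, $\varphi$, $q$) works, even though the paper gets by with a simple tent function.

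The gap is in the choice of $z_{0}$, and you have essentially put your finger on it yourself without resolving it. You propose taking $z_{0}\in\stern(C_{Y})$ with $0<\rho_{Y}(f(u_{0}),z_{0})<D_{Y}$, and in the ``degenerate configuration'' $\stern(C_{Y})\cap B(f(u_{0}),D_{Y})=\{f(u_{0})\}$ you suggest picking $z_{0}$ on a short subsegment of $[f(u_{0}),y]$ and verifying $[F(x),z_{0}]\subseteq C_{Y}$ ``by a local argument using only the temperate-curvature assumption on $Y$''. That cannot work: temperate curvature is a metric property of $Y$ and says nothing about which geodesics lie inside the subset $C_{Y}$; the inclusion $[F(x),z_{0}]\subseteq C_{Y}$ is a star-shapedness statement, not a curvature statement. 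For the inclusion to hold you need either $z_{0}\in\stern(C_{Y})$ or $F(x)=f(x)\in\stern(C_{Y})$, and in your degenerate case neither is assured in general, because $f$ need not be constant on $C_{X}\cap B(u_{0},r)$ (it is only constant on segments of $\G$). The paper resolves this through the dichotomy in the claim inside its proof: either $f(x)\in\stern(C_{Y})$ for \emph{every} $x\in C_{X}\cap U'$ (then any nearby $y_{0}\in C_{Y}\setminus\{f(u_{0})\}$ works, since each $f(x)$ itself is a star centre), or there is a point $u_{0}$ with $f(u_{0})\notin\stern(C_{Y})$, and one chooses $u_{0}$ to be this point, after which $C_{Y}\subseteq B(\stern(C_{Y}),D_{Y})$ automatically produces $y_{0}\in\stern(C_{Y})$ with $0<\rho_{Y}(f(u_{0}),y_{0})<D_{Y}$. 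Notice this requires that the choice of $u_{0}$ be allowed to depend on the $f$-values; your scheme fixes $\Gamma$ and $u_{0}\in(w_{0},w_{1})$ first, which removes precisely the freedom needed to run the second branch of the dichotomy. If you drop the commitment to a pre-chosen $\Gamma$, choose $u_{0}$ according to the paper's case analysis, and then manufacture the probe segment $[u_{0},v_{s}]\in\G$ afterwards, the rest of your argument goes through.
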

\begin{proof}
  Fix a mapping $f\in\Q_{0}(U)$ and choose an extension $F:\conv(C_{X})\to\ell_{\infty}(\Omega)$ of $f$ with $\lip(F|_{U})<1$. Choose $x_{0}\in\stern(C_{X})$ such that $U\cap B(x_{0},D_{X})\neq\emptyset$ and set $U'=U\cap B(x_{0},D_{X})\setminus\left\{x_{0}\right\}$. We make the following claim:
  \begin{claim}
    There exist $u_{0}\in C_{X}\cap U'$, $y_{0}\in C_{Y}\setminus \left\{f(u_{0})\right\}$ and $r>0$ such that $F(\conv(C_{X})\cap B(u_{0},r))\subseteq B(y_{0},D_{Y})$ and for every $x\in C_{X}\cap B(u_{0},r)$, there is a unique metric segment $[f(x),y_{0}]\subseteq C_{Y}$ .
  \end{claim}
  \begin{proof}
    We distinguish between two cases. First assume that $f(x)\in\stern(C_{Y})$ for all $x\in C_{X}\cap U'$. Then we choose $u_{0}\in C_{X}\cap U'$ arbitrarily and let $r>0$ be small enough so that $B(u_{0},r)\subseteq U'$ and $F(\conv(C_{X})\cap B(u_{0},r))\subseteq B(f(u_{0}),D_{Y}/2)$. Let $y_{0}\in C_{Y}\cap B(f(u_{0}),D_{Y}/2)\setminus\left\{f(u_{0})\right\}$ be arbitrary. The assertion of the claim is now clear.
    
    In the remaining case we choose $u_{0}\in C_{X}\cap U'$ such that $f(u_{0})\notin \stern(C_{Y})$ and use the fact that $C_{Y}\subseteq B(\stern(C_{Y}),D_{Y})$ to choose $y_{0}\in \stern(C_{Y})\cap B(f(u_{0}),D_{Y})$. Letting $r>0$ be sufficiently small so that $F(\conv(C_{X})\cap B(u_{0},r))\subseteq B(y_{0},D_{Y})$, we verify the claim.
  \end{proof}
  Let $u_{0}\in C_{X}\cap U'$, $y_{0}\in C_{Y}\setminus\left\{f(u_{0})\right\}$ and $r>0$ be given by the claim. Choose $\sigma\in(0,1)$ small enough so that
  \begin{equation*}
    (1+\sigma)\lip(F|_{U})+2\sigma\leq 1.
  \end{equation*}
  By making $r$ smaller if necessary we may assume that $B(u_{0},r)\subseteq U'$ and $r\in(0,r_{0})$, where $r_{0}>0$ is given by the conclusion of Lemma~\ref{lemma:pert} with $Z=Y$, $\sigma$, $u_{0}$, $z_{0}=y_{0}$ and $\pi=F$. Set $\varepsilon_{0}=r$.
  Given $\varepsilon\in(0,\varepsilon_{0})$, we define a mapping $\lambda:X\to [0,1]$ by
  \begin{equation*}
    \lambda(x):=\frac{\sigma}{2\rho_{Y}(f(u_{0}),y_{0})}\max\left\{\varepsilon-\rho_{X}(x,u_{0}),0\right\}, \quad x\in X.
  \end{equation*}
  Then, 
  \[
    \lambda(x)=0 \text{ for all } X\setminus B(u_{0},r), \;\left\|\lambda\right\|_{\infty}\leq \varepsilon/2\rho_{Y}(f(u_{0}),y_{0})\text{ and }\lip(\lambda)\leq \sigma/\rho_{Y}(f(u_{0}),y_{0}).
  \]
  Thus, the conditions of Lemma~\ref{lemma:pert} are satisfied for $Z=Y$, $\sigma$, $u_{0}$, $z_{0}=y_{0}$ $\pi=F$, $r,\varepsilon\in(0,r_{0})$ and $\lambda$. Therefore, Lemma~\ref{lemma:pert} asserts that the mapping $G$ defined by
  \begin{equation*}
    G(x):=\begin{cases}
    (1-\lambda(x))F(x)\oplus\lambda(x)y_{0} & \text{if }x\in C_{X}\cap B(u_{0},r),\\
    F(x) & \text{if }x\in\conv(C_{X})\setminus B(u_{0},r),
    \end{cases}
  \end{equation*}
  satisfies $G(C_{X})\subseteq C_{Y}$, $\rho_{Y}(G(x),F(x))\leq\varepsilon$ for all $x\in C_{X}$ and $\lip(G)\leq1$. Clearly, the restriction $g$ of the mapping $G$ to the set $C_{X}$ can be viewed as an element of $\M(C_{X},C_{Y})$ satisfying $d_{\theta}(g,f)\leq\varepsilon$. 
  
  Since $B(u_{0},r)\subseteq U'=U\cap B(x_{0},D_{X})\setminus\left\{x_{0}\right\}$ and $x_{0}\in \stern(C_{X})$, we have that $[u_{0},x_{0}]\subseteq C_{X}$. Identifying the metric segment $[u_{0},x_{0}]$ with a real interval we have the $u_{0}<u_{0}+\varepsilon<u_{0}+r<x_{0}$. Hence $[u_{0},u_{0}+\varepsilon]\in \mathcal{G}$. Using $\lambda(u_0+\varepsilon)=0$ and the fact that $f$ is constant on the segment $[u_{0},u_{0}+\varepsilon]$, we get
  \begin{align*}
    \rho_{Y}(g(u_{0}+\varepsilon),g(u_{0}))
    &=\rho_{Y}(f(u_{0}),(1-\frac{\sigma\varepsilon}{2\rho_{Y}(f(u_{0}),y_{0})})f(u_{0})\oplus 
      \frac{\sigma\varepsilon}{2\rho_{Y}(f(u_{0}),y_{0})}y_{0})=\frac{\sigma\varepsilon}{2}.
  \end{align*}
  For all $h\in\M(C_{X},C_{Y})$ with 
  \[
  d_{\theta}(h,g)\leq \frac{\sigma\varepsilon}{6(1+\rho_{X}(u_{0},\theta)+\varepsilon_{0})},
  \]
  we have $\rho_{Y}(h(x),g(x))\leq\sigma\varepsilon/6$ for $x=u_{0},u_{0}+\varepsilon$ which, when combined with the above equation, implies that $h$ is non-constant on the metric segment $[u_{0},u_{0}+\varepsilon]\in\G$. Hence 
  \[
  B(g,\frac{\sigma\varepsilon}{6(1+\rho_{X}(u_{0},\theta)+\varepsilon_{0})})\cap \Q_{0}(U)=\emptyset
  \]
  and the proof is complete.
\end{proof}
\begin{remark}
  \begin{enumerate}[(i)]
  \item The proof of Lemma~4.4 is the only place in the proof of Theorem~\ref{thm:all}, or indeed any of the results of Section~3, where we use the hypothesis that $C_{Y}$ is $\rho_Y$-star-shaped and satisfies $C_{Y}\subseteq B(\stern(C_{Y}),D_{Y})$.
  \item In the special case where $C_{X}$ is $\rho_{X}$-convex, the set $Q_{0}(U)$ becomes simply the set of all mappings $f\in \Q(U)$ which are constant on the set $C_{X}\cap U$. The conclusion of Lemma~\ref{lemma:ConstPorousStar} is then valid under much weaker assumptions on the set $C_{Y}$. For example, it suffices to assume that $C_{Y}$ is a metric space in which every point belongs to some non-trivial geodesic. Thus, if we restrict our attention to the case where $C_{X}$ is $\rho_{X}$-convex, the results of Section~3 can be generalised accordingly.
  \end{enumerate}
\end{remark}
\begin{proof}[Proof of Theorem~\ref{thm:all}]
  For each $f\in\Q(U)\setminus \Q_{0}(U)$, we have
  \begin{equation*}
    (\sup_{\Gamma\in \G}\lip(f|_{\Gamma}),\inf_{F\in\E(f)}\lip(F|_{U}))\in(0,1)^{2}.
  \end{equation*}
  The family of all rectangles of the form $(a,b)\times(0,1-\frac{1}{p})$, where $p\in\mathbb{N}$ with $p\geq 2$ and $0<a<b<1$ satisfy~\eqref{eq:ConditionABUBStar}, is an open cover of $(0,1)^{2}$. Therefore, since $(0,1)^{2}$ is a Lindelöf space, this family admits a countable subcover $((a_{i},b_{i})\times(0,1-\frac{1}{p_{i}}))_{i=1}^{\infty}$. Hence we may write
  \[
  \Q(U) = \bigcup_{i=1}^{\infty} \Q_{a_{i},b_{i}}^{p_{i}}(U) \cup \Q_0(U).
  \]
  Applying now Lemma~\ref{lemma:ConstPorousStar} and Lemma~\ref{lemma:Qabpporous}, we arrive at the asserted result.
\end{proof}

\section{An application to set-valued mappings}

The goal of this section is to examine properties of spaces of non-empty, closed and bounded subsets of hyperbolic spaces in order to show that these spaces can be chosen as the range of the nonexpansive mappings in the theorems which were established in the previous sections.

Let $(X,\rho)$ be a complete hyperbolic space and $C\subseteq X$ be a non-empty, non-singleton, closed and $\rho$-star-shaped set. We consider the space
\[
\mathcal{B}(C) := \left\{A\subseteq C\colon\; A \text{ is nonempty, closed and bounded}\right\}
\]
equipped with the Pompeiu-Hausdorff metric
\[
h(A,B) := \max\big\{\sup\{\dist(a,B)\colon a\in A\},\; \sup\{\dist(b,A)\colon b\in B \}\big\},
\]
where $\dist(x,A) := \inf\{\rho(x,a)\colon a\in A\}$. 
The space $\mathcal{B}(C)$ is a complete metric space by~\cite[§33, IV]{Kur1966Topology}. In addition to the hyperspace of all bounded and closed sets, we also consider the subspaces $\mathcal{K}(C)$ of compact subsets and $\mathcal{CB}(C)$ of $\rho$-convex, bounded and closed sets.

In the case where $X$ is a Banach space, the following lemma is a consequence of Proposition~4.6 in~\cite{Str2014Porosity}.

\begin{lemma}\label{lem:starShapedSetValued}
  There is a family $\mathcal{F}$ of metric segments in $\mathcal{B}(C)$ such that the triple $(\mathcal{B}(C),h,\mathcal{F})$ is a space of temperate curvature with $D_{\mathcal{B}(C)}=\infty$ and $\mathcal{B}(C)$ is a $h$-star-shaped subset of this space.
\end{lemma}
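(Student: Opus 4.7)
The plan is to define $\mathcal{F}$ minimally, containing only metric segments from arbitrary sets $A\in\mathcal{B}(C)$ to singletons $\{x_0\}$ with $x_0\in\stern(C)$. Concretely, for each such $A$ and $x_0$, set
\[
A_t^{x_0} := \overline{\{(1-t)a\oplus_X tx_0 : a\in A\}}, \qquad t\in[0,1],
\]
where $\oplus_X$ is the convex combination in the hyperbolic space $X$, and let $\mathcal{F}$ consist of all sets $\{A_t^{x_0} : t\in[0,1]\}$ as $A$ ranges over $\mathcal{B}(C)$ and $x_0$ over $\stern(C)$. First I would check that each such set is genuinely a metric segment from $A$ to $\{x_0\}$ of length $h(A,\{x_0\})$. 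The hyperbolic inequality in $X$ with both target points equal to $x_0$ shows that $a\mapsto (1-t)a\oplus_X tx_0$ is $(1-t)$-Lipschitz, so $A_t^{x_0}$ is nonempty, closed and bounded; it lies inside $C$ because $C$ is $\rho$-star-shaped around $x_0$ and closed. Since the parametrization of $[a,x_0]$ in $X$ is isometric, $\rho_X((1-s)a\oplus_X sx_0,(1-t)a\oplus_X tx_0)=|s-t|\rho_X(a,x_0)$, which after taking the Hausdorff supremum yields $h(A_s^{x_0},A_t^{x_0})\leq |s-t|h(A,\{x_0\})$; the reverse inequality follows from the triangle inequality in $\mathcal{B}(C)$ applied to the four points $A_0^{x_0},A_s^{x_0},A_t^{x_0},A_1^{x_0}$, together with $h(A_0^{x_0},A_1^{x_0})=h(A,\{x_0\})$.

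Next I would verify the two axioms of a space of temperate curvature with $D_{\mathcal{B}(C)}=\infty$. Axiom (i) is automatic: each segment in $\mathcal{F}$ has endpoints $(A,\{x_0\})$ for a unique pair, and exactly one segment is assigned per pair. For axiom (ii), observe that the requirement $[Z,B],[W,B]\in\mathcal{F}$ forces $B$ to be a singleton $\{x_0\}$ with $x_0\in\stern(C)$, making the condition vacuous otherwise. When $B=\{x_0\}$, applying the hyperbolic inequality of $X$ pointwise gives $\rho_X((1-t)z\oplus_X tx_0,(1-t)w\oplus_X tx_0)\leq (1-t)\rho_X(z,w)$ for $z\in Z$, $w\in W$, and taking the Hausdorff supremum yields
\[
h\bigl((1-t)Z\oplus t\{x_0\},(1-t)W\oplus t\{x_0\}\bigr)\leq (1-t)h(Z,W)\leq h(Z,W),
\]
which is strictly stronger than the required $(1+\sigma)h(Z,W)$ bound. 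Hence any $\delta>0$, e.g.\ $\delta=1$, works for every $\sigma>0$.

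Finally, $h$-star-shapedness of $\mathcal{B}(C)$ is immediate from the construction by designating $\{x_0\}\in\mathcal{B}(C)$ as the star centre: for every $A\in\mathcal{B}(C)$ the segment $[A,\{x_0\}]\in\mathcal{F}$ has already been shown to lie inside $\mathcal{B}(C)$. The main technical point of the argument is the isometric parametrization in the first paragraph — the upper bound is a routine application of the hyperbolic inequality, but the matching lower bound requires the mild trick of invoking the triangle inequality on the four points rather than a direct estimate. Once that is settled, the thin-triangles condition is essentially free, precisely because we have chosen $\mathcal{F}$ small enough that the only relevant peaks are singletons, where the hyperbolic inequality of $X$ already delivers an estimate stronger than temperate curvature demands.
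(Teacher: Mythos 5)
Your construction is the same as the paper's, and your verification of the metric-segment property (upper bound by the hyperbolic inequality, lower bound by the triangle inequality in $\mathcal{B}(C)$) matches the paper's argument. However, there is a genuine gap in your verification of condition~(ii) of temperate curvature. You assert that the requirement $[Z,B],[W,B]\in\mathcal{F}$ forces the peak $B$ to be a singleton $\{x_0\}$ with $x_0\in\stern(C)$, and treat the contrary case as vacuous. That is not correct. Every segment in $\mathcal{F}$ has exactly one endpoint of the form $\{c\}$ with $c\in\stern(C)$, but that endpoint can be the \emph{near} vertex rather than the peak: if $B$ is an arbitrary (non-singleton) element of $\mathcal{B}(C)$ and $Z=\{c_1\}$, $W=\{c_2\}$ with $c_1,c_2\in\stern(C)$, then $[Z,B]=[\{c_1\},B]$ and $[W,B]=[\{c_2\},B]$ both lie in $\mathcal{F}$ (take $A=B$ in the definition). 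Condition~(ii) therefore also requires a bound on $h((1-t)\{c_1\}\oplus tB,\,(1-t)\{c_2\}\oplus tB)$. This case is handled in the paper by the pointwise estimate $\rho((1-t)c_1\oplus ta,\,(1-t)c_2\oplus ta')\leq(1-t)\rho(c_1,c_2)+t\rho(a,a')$ for $a,a'\in B$, which, after taking the infimum over $a'$ (with $a'=a$), the supremum over $a$, and using symmetry, yields $h((1-t)\{c_1\}\oplus tB,\,(1-t)\{c_2\}\oplus tB)\leq(1-t)h(\{c_1\},\{c_2\})$, again stronger than required. Without this second case, the verification of temperate curvature is incomplete.

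A secondary, smaller omission: your claim that axiom~(i) is ``automatic'' because each segment in $\mathcal{F}$ corresponds to a unique pair $(A,\{x_0\})$ glosses over the case where both endpoints are singletons in $\stern(C)$. If $c_1,c_2\in\stern(C)$, the pairs $(\{c_1\},c_2)$ and $(\{c_2\},c_1)$ both enter the definition of $\mathcal{F}$, and one must check they produce the same set; this follows from the uniqueness of the metric segment $[c_1,c_2]$ in the hyperbolic space $X$, but it deserves a sentence.
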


\begin{proof}
  For $A\in\mathcal{B}(C)$, $A\neq\{c\}$, we define 
  \begin{equation}\label{eq:SetConvex}
  A^{(1-\lambda)} := \{(1-\lambda) a \oplus \lambda c\colon a\in A\} \quad\text{and}\quad
  (1-\lambda) A\oplus \lambda\{c\} := \overline{A^{(1-\lambda)}},
  \end{equation}
  and set
  \[
  \mathcal{F}:=\left\{\left\{(1-\lambda) A\oplus \lambda\{c\}\colon \lambda\in[0,1]\right\}\colon A\in\mathcal{B}(C), \; c\in\stern(C)\right\}. 
  \]
  In order to show that $\mathcal{F}$ is a well-defined collection of metric segments in $\mathcal{B}(C)$, we have to show that $(1-\lambda) A\oplus \lambda\{c\}\in\mathcal{B}(C)$ for every $A\in\mathcal{B}(C)$, $A\neq\{c\}$, and that the sets $[A,\left\{c\right\}]:=\{(1-\lambda) A\oplus \lambda\{c\}\in\mathcal{B}(C)\colon \lambda\in [0,1]\}$ are metric segments. In order to show uniqueness of the metric segments in $\mathcal{F}$, note that we only have to consider the case of two singletons $\{c\}$ where $c\in\stern(C)$, since for every other set $A$ the pair $(A,\{c\})$ appears only once in the definition of $\mathcal{F}$.
  Uniqueness of segments of the form $[\{c_1\},\{c_2\}]$, where $c_1,c_2\in\stern(C)$ follows from the fact that $X$ is hyperbolic.

  For $a,b\in A$, the inequality
  \[
  \rho((1-\lambda) a \oplus \lambda c, (1-\lambda) b \oplus \lambda c)\leq (1-\lambda) \rho(a,b) \leq (1-\lambda) \operatorname{diam}(A),
  \]
  which follows from the fact that $X$ is a hyperbolic space, implies that $(1-\lambda) A\oplus \lambda\{c\}$ is a bounded set. Since it is, by definition, also non-empty and closed, we get that it is contained in $\mathcal{B}(C)$. In addition, note that for all $\mu\in [0,1]$ and all $a\in A$, the point $(1-\mu)a\oplus\mu c$ lies on the metric segment $[a,c]$, which is contained in $C$ because $C$ is $\rho$-star-shaped with respect to~$c$. Therefore $(1-\mu) A \oplus \mu \{c\} \subseteq C$ for all $\mu\in[0,1]$.

  Note that from $h(B,\overline{B})=0$ for arbitrary bounded sets $B\subseteq C$, we may deduce
  \[
  h((1-\lambda) A\oplus \lambda\{c\},E) = h(A^{(1-\lambda)},E)
  \] 
  for every bounded set $E\subseteq C$. Now let $A\in\mathcal{B}(C)$, $c\in\stern(C)$, $\lambda,\mu\in [0,1]$ and assume without loss of generality that $\lambda > \mu$. Then
  \begin{align*}
    h\big((1-\lambda) A\oplus \lambda\{c\},\{c\}\big) & = \sup\{\rho((1-\lambda) a \oplus\lambda c,c)\colon a\in A\} \\
                                                                 & = (1-\lambda) \sup\{\rho(c,a)\colon a\in A\} = (1-\lambda) h(A,\{c\}).
  \end{align*}
  Moreover, we have
  \[
    h\big((1-\mu) A\oplus \mu\{c\}, \{c\}\big) \leq h\big((1-\mu) A\oplus \mu\{c\}, (1-\lambda) A\oplus \lambda\{c\}\big) 
    + h\big((1-\lambda) A\oplus\lambda\{c\}, \{c\}\big),
  \]
  which is equivalent to
  \[
    h\big((1-\lambda) A\oplus \lambda\{c\}, (1-\mu) A\oplus \mu\{c\}\big) \geq (\lambda - \mu) h(A, \{c\}).
  \]
  On the other hand, we also have
  \begin{align*}
    \dist\big((1-\lambda)a\oplus\lambda c, (1-\mu) A\oplus \mu \{c\}\big) 
    & = \inf\{\rho((1-\lambda)a\oplus\lambda c,(1-\mu)b\oplus\mu c)\colon b\in A\}\\
    & \leq (\lambda -\mu) \rho(a,c) \leq (\lambda -\mu) h(A, \{c\})
  \end{align*}
  and analogously, $\dist\big((1-\mu)a\oplus\mu c, (1-\lambda) A \oplus \lambda \{c\}\big) \leq (\lambda-\mu) h(A,\{c\})$.\\
  Therefore $h\big((1-\lambda) A\oplus \lambda\{c\}, (1-\mu) A\oplus \mu\{c\}\big) = |\lambda-\mu| h(A,\{c\})$. The above facts show that for all $A\in\mathcal{B}(C)$, $A\neq\{c\}$, the mapping
  \[
    [0,h(\{c\},A)] \to \mathcal{B}(C), \quad \lambda \mapsto \left(1-\tfrac{\lambda}{h(A,\{c\})}\right) A \oplus \tfrac{\lambda}{h(A,\{c\})}\{c\}
  \]
  is a metric embedding and therefore $[A,\{c\}]$ is a metric segment in $\mathcal{B}(C)$. 

  We now show that $(\mathcal{B}(C), h,\mathcal{F})$ is of temperate curvature. That this triple satisfies condition~\eqref{localuniqueness} of Definition~\ref{def:weaklyhyperbolic} with $D_{\mathcal{B}(C)}=\infty$ is already clear. It only remains to verify condition~\eqref{thinishtriangles} of Definition~\ref{def:weaklyhyperbolic}. We will prove something stronger. Namely, that metric segments in $\mathcal{F}$ even satisfy the hyperbolic inequality~\eqref{eq:hyperineq}, or equivalently
  \begin{equation}\label{eq:hypineq2}
    h((1-\lambda)A\oplus \lambda E,(1-\lambda)B\oplus \lambda E)\leq (1-\lambda) h(A,B)
  \end{equation}
  for all $A,B,E\in\mathcal{B}(C)$ with $[A,E],[B,E]\in\mathcal{F}$. Note that all segments in $\mathcal{F}$ have a set of the form $\{c\}$, where $c\in\stern(C)$, as one of their endpoints. Therefore we only need to verify~\eqref{eq:hypineq2} for the case $E=\left\{c\right\}$ with $c\in \stern(C)$ and the case $A=\left\{c_{1}\right\}$, $B=\left\{c_{2}\right\}$ with $c_{1},c_{2}\in \stern(C)$.
  
  Given $A, B \in\mathcal{B}(C)$ and $c\in\stern(C)$, let $a\in A$ and $b\in B$. Since $X$ is a hyperbolic space, we have
  \[
    \rho((1-\lambda)b\oplus\lambda c, (1-\lambda)a\oplus\lambda c) \leq (1-\lambda) \rho(a,b)
  \]
  and hence 
  \[
    \operatorname{dist}((1-\lambda)b\oplus\lambda c, (1-\lambda) A \oplus \lambda\{c\}) 
    \leq (1-\lambda) \inf \{\rho(a,b)\colon a\in A\} =(1-\lambda) \operatorname{dist}(b,A)
  \]
  for all elements of $(1-\lambda) B\oplus \lambda\{c\}$. Since the situation is completely analogous if we swap the roles of $(1-\lambda) A \oplus \lambda\{c\}$ and $(1-\lambda) B \oplus \lambda \{c\}$, we may conclude that
  \[
    h((1-\lambda) A \oplus \lambda\{c\},(1-\lambda) B \oplus \lambda\{c\})\leq (1-\lambda) h(A,B).
  \]
  This verfies inequality~\eqref{eq:hypineq2} for the case $E=\left\{c\right\}$. 
  
  To prove the inequality in the remaining case, we take $c_1,c_2\in\stern(C)$, $E\in\mathcal{B}(C)$ and observe that
  \[
    \rho((1-\lambda) c_1\oplus \lambda a, (1-\lambda) c_2\oplus \lambda a') \leq (1-\lambda) \rho(c_1,c_2) + \lambda \rho(a,a'),
  \]
  for all $a,a'\in E$, by~\eqref{eq:hyperineq}. From this we may deduce
  \[
    \operatorname{dist}((1-\lambda) c_1\oplus \lambda a, (1-\lambda)\{c_2\}\oplus\lambda E) \leq (1-\lambda) \rho(c_1,c_2) = (1-\lambda) h(\{c_1\},\{c_2\})
  \]
  for all $a\in E$, and therefore, since the situation is completely symmetric with respect to $c_1$ and $c_2$,
  \[
    h((1-\lambda)\{c_1\}\oplus\lambda E, (1-\lambda) \{c_2\}\oplus\lambda E) \leq (1-\lambda) h(\{c_1\},\{c_2\}).
  \]
  Finally, note that by the construction of $\mathcal{F}$, we get 
  \[
    \stern(\mathcal{B}(C)) = \{\{c\}\colon c\in\stern(C)\} 
  \]
  and hence $\mathcal{B}(C)$ is a $h$-star-shaped subset of $(\mathcal{B}(C),h,\mathcal{F}$).
\end{proof}

\begin{remark}
  Note that the above construction does not work if we replace the set $\{c\}$ by a non-singleton as can be seen by the following example. We consider the metric space $C:=[-1,1]^2$ equipped with the standard metric and set $A:=\{(-1,-1),(-1,1)\}$ and $B:=\{(1,-1),(1,1)\}$. We get $h(A,B)=2$ and
  \[
    \frac{1}{2}A+\frac{1}{2} B = \{(0,-1),(0,0),(0,1)\}.
  \]
  Therefore $h(\frac{1}{2}A+\frac{1}{2} B, A)= \sqrt{2} \neq \frac{1}{2} h(A,B)$. More generally, Example~4.7 in~\cite{Str2014Porosity} shows that even in the case of Banach spaces the hyperspace of bounded and closed subsets cannot be a hyperbolic space in the sense of Reich-Shafrir.
\end{remark}

As a consequence of Lemma~\ref{lem:starShapedSetValued} and Theorems~\ref{thm:Nsigporous}, \ref{thm:convexresidual} and~\ref{thm:starshapedresidual}, we can infer the following corollary regarding set-valued 
nonexpansive mappings.

\begin{corollary}\label{cor:OnHyperspaces}
  Let $X$ be a complete hyperbolic space and $C\subseteq X$ be a non-empty, non-singleton, closed, $\rho$-star-shaped subset. Then the following statements hold:
  \begin{enumerate}[(i)]
  \item The set 
    \[
      \mathcal{N}(C,\mathcal{B}(C)) := \{f\colon C\to \mathcal{B}(C)\colon \lip(f)<1\},
    \]
    is a $\sigma$-porous subsets of the space
    \[
      \mathcal{M}(C,\mathcal{B}(C)) := \{f\colon C\to \mathcal{B}(C)\colon \lip(f)\leq 1\}
    \]
    of all nonexpansive $\mathcal{B}(C)$-valued mappings equipped with the metric $d_\theta$.
  \item If $C$ is separable, there exists a $\sigma$-porous set $\widetilde{\N}\subseteq \M(C,\mathcal{B}(C))$ such that for all $f\in \M(C,\mathcal{B}(C))\setminus\widetilde{\N}$, the set
    \begin{equation*}
      \widehat{R}(f)=\left\{x\in C\colon \globlip(f,x)=1\right\}
    \end{equation*}
    is a residual subset of $C$.
  \item If $C$ is separable and $\rho$-convex, there exists a $\sigma$-porous set $\widetilde{\N}\subseteq \M(C,\mathcal{B}(C))$ such that for all $f\in \M(C,\mathcal{B}(C))\setminus\widetilde{\N}$, the set
    \begin{equation*}
      R(f)=\left\{x\in C\colon \lip(f,x)=1\right\}
    \end{equation*}
    is a residual subset of $C$.
  \end{enumerate}
\end{corollary}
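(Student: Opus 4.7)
The plan is to observe that Corollary~\ref{cor:OnHyperspaces} is essentially a packaging result: once Lemma~\ref{lem:starShapedSetValued} is in hand, the three assertions follow from Theorems~\ref{thm:Nsigporous}, \ref{thm:starshapedresidual}, and~\ref{thm:convexresidual}, respectively, applied with target space $Y := \mathcal{B}(C)$. So the bulk of the work consists in verifying that Hypotheses~\ref{hypotheses} are satisfied in this setting.

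First I would take $X$ as given (complete, hyperbolic, hence complete, weakly hyperbolic with $D_X=\infty$) and set $Y := \mathcal{B}(C)$ equipped with the Pompeiu-Hausdorff metric $h$. By the classical result cited after the definition of $\mathcal{B}(C)$, the space $(\mathcal{B}(C),h)$ is complete, and by Lemma~\ref{lem:starShapedSetValued} it is a space of temperate curvature with $D_{\mathcal{B}(C)}=\infty$, and $\mathcal{B}(C)$ is $h$-star-shaped in itself. Setting $C_X := C$ and $C_Y := \mathcal{B}(C)$, both sets are non-empty, non-singleton, closed, and star-shaped (with respect to $\rho$ and $h$, respectively). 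Because $D_Y=\infty$, the condition $C_Y\subseteq B(\stern(C_Y),D_Y)$ holds trivially. Both $X$ and $\mathcal{B}(C)$ admit isometric embeddings into a common $\ell_\infty(\Omega)$, and I would take $\conv(C_X):=X$, which is $\rho$-convex because $X$ is hyperbolic. Thus all of Hypotheses~\ref{hypotheses} are in force for the pair $(C,\mathcal{B}(C))$.

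With the hypotheses verified, part (i) is immediate from Theorem~\ref{thm:Nsigporous}: $\mathcal{N}(C,\mathcal{B}(C))$ is $\sigma$-porous in $\mathcal{M}(C,\mathcal{B}(C))$. For part (ii), since $D_X=\infty$ gives $C\subseteq\overline{B}(\stern(C),D_X)=X$, the separability of $C$ allows a direct application of Theorem~\ref{thm:starshapedresidual}, yielding the desired $\sigma$-porous set $\widetilde{\N}$ outside of which $\widehat{R}(f)$ is residual. For part (iii), when $C$ is additionally $\rho$-convex, Theorem~\ref{thm:convexresidual} applies and gives the analogous statement with the stronger conclusion that $R(f)$ is residual.

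The only point that requires brief attention is the fact that the targets of the mappings in $\mathcal{M}(C,\mathcal{B}(C))$ are sets rather than points, but this is absorbed into the abstract framework as soon as $\mathcal{B}(C)$ is recognised as a metric space satisfying Hypotheses~\ref{hypotheses}; no separate argument is needed. I do not anticipate a genuine obstacle: the real technical content lives in Lemma~\ref{lem:starShapedSetValued} (temperate curvature of the hyperspace) and in Theorems~\ref{thm:Nsigporous}, \ref{thm:convexresidual}, \ref{thm:starshapedresidual}; the present corollary is a direct specialisation.
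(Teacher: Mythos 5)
Your proposal is correct and matches the paper's (implicit) argument: the paper simply asserts the corollary as a consequence of Lemma~5.1 and Theorems~3.3, 3.6 and~3.8 without writing a proof, and your write-up is exactly the verification of Hypotheses~3.1 that makes this citation legitimate. The only tiny detail you leave unstated is that $\mathcal{B}(C)$ is non-singleton (needed for $C_Y$ in Hypotheses~3.1), which follows at once from $C$ being non-singleton, so no genuine gap remains.
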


\begin{remark}
  Results analogous to Corollary~\ref{cor:OnHyperspaces} are valid for all hyperspaces $\mathcal{X}(C)$ with the property that
  \[
    (1-\lambda) A\oplus \lambda\{c\} \in \mathcal{X}(C),
  \]
  where $(1-\lambda) A\oplus \lambda\{c\}$ is defined in~\eqref{eq:SetConvex}, for all $c\in\stern(C)$, $\lambda\in[0,1]$ and $A\in\mathcal{X}(C)$. In the case of $\mathcal{K}(C)$ this follows 
  from the fact that for all $c\in\stern(C)$ and all $\lambda\in[0,1]$, the mapping
  \[
    C\to C,\quad a\mapsto (1-\lambda) a\oplus \lambda c
  \]
  is continuous.
  In~\cite{PL2014ContractiveSetValued} spaces with this property are called ``admissible'' and, besides $\mathcal{B}(C)$ and $\mathcal{K}(C)$, the following examples are given in~\cite[Remark~2.5, p.~1417]{PL2014ContractiveSetValued}: the space of singletons, the space of bounded, closed and $\rho$-convex sets, and the space of compact and $\rho$-convex sets.
\end{remark}

\begin{remark}
  In addition to the above corollary, we can also show that the set of bounded strict contractions is a $\sigma$-porous subset of the space of all bounded nonexpansive $\mathcal{B}(C)$- and $\mathcal{K}(C)$-valued mappings if we equip these spaces with the metric of uniform convergence.
\end{remark}

\begin{remark}
  Note that if $X$ is a Banach space, we do not need to take the closure in the definition of the set $(1-\lambda)A\oplus \lambda\{c\}$ in~\eqref{eq:SetConvex} since the sum of a closed set and a compact set is closed. In addition, if we define
  \begin{equation}\label{eq:ConvComb}
    (1-\lambda)A\oplus \lambda B := \overline{\{(1-\lambda)a + \lambda b\colon a\in A,\; b\in B\}}.
  \end{equation}
  for bounded, closed and convex sets $A$ and $B$ and $\lambda\in[0,1]$ we get analogously to above a well-defined mapping from $[0,h(A,B)]$ to the space of bounded, closed and convex sets which satisfies the hyperbolicity inequality. That the above mapping is an isometry follows from this inequality and from
  \[
    (1-\lambda)A\oplus\lambda B = \frac{1-\lambda}{1-\mu}\big((1-\mu)A\oplus\mu B\big)\oplus \frac{\lambda-\mu}{1-\mu}B.
  \]
  for bounded, closed and convex sets $A$ and $B$ and $0\leq \mu < \lambda\leq 1$, which can be shown by interchanging the occurring convex combinations.
  This implies that the space of bounded, closed and convex subsets of a closed and convex subset of a Banach space is $h$-convex. In particular, the hyperspace of bounded, closed and convex subsets of a bounded and closed subset of a Banach space is a hyperbolic space. We remark in passing that convexity, in a more general sense, of hyperspaces of compact sets is studied in detail in~\cite{Dud1970ConvexV}. For the star-shapedness and hyperbolicity properties of these hyperspaces on subsets of Banach spaces, we refer the interested reader to~\cite{Str2014Porosity}.
\end{remark}

\vspace{3mm}
{\noindent\textbf{Acknowledgments.}} The authors wish to thank an anonymous referee for reading the paper very carefully and for many useful and interesting suggestions which made the article significantly more reader friendly. This research was supported in part by the Israel Science Foundation (Grant 389/12), the Fund for the Promotion of Research at the Technion and by the Technion General Research Fund.

\newpage
\noindent
Christian Bargetz\\
Department of Mathematics\\
The Technion---Israel Institute of Technology\\
32000 Haifa,
Israel\\[1mm]
and\\[1mm]
Department of Mathematics\\
University of Innsbruck\\
Technikerstraße 13,
6020 Innsbruck,
Austria (current address)\\
\texttt{christian.bargetz@uibk.ac.at}\\[3mm]
Michael Dymond\\
Department of Mathematics\\
University of Innsbruck\\
Technikerstraße 13,
6020 Innsbruck,
Austria\\
\texttt{michael.dymond@uibk.ac.at}\\[3mm]\noindent
Simeon Reich\\
Department of Mathematics\\
The Technion---Israel Institute of Technology\\
32000 Haifa,
Israel\\
\texttt{sreich@math.technion.ac.il}

\end{document}